\colorlet{cite}{red}
\tikzset{ 
  baseline=-2.3pt,
  text height=1.5ex, text depth=0.25ex,
  >=stealth,
  node distance=2cm,
  mid/.style={fill=white,inner sep=2.5pt},
}
\newtheoremstyle{mydef}
  {}		
  {}		
  {}		
  {}		
  {\scshape}	
  {. }		
  { }		
  {\thmname{#1}\thmnumber{ #2}\thmnote{ #3}}	
\newtheorem{theorem}{Theorem}[section]
\newtheorem*{theorem*}{Theorem}
\newtheorem{proposition}[theorem]{Proposition}
\newtheorem*{proposition*}{Proposition}
\newtheorem{lemma}[theorem]{Lemma}
\newtheorem*{lemma*}{Lemma}
\newtheorem{corollary}[theorem]{Corollary}
\newtheorem*{corollary*}{Corollary}
\theoremstyle{definition}
\newtheorem{definition}[theorem]{Definition}
\newtheorem{Assumption}[theorem]{Assumption}
\newtheorem{example}[theorem]{Example}
\theoremstyle{remark}
\newtheorem{remark}[theorem]{Remark}
\author{Fabricio Valencia}
\subjclass[2010]{53C25; 53C26; 22E60; 22F30}
\address{F. Valencia - Departamento de Matem\'atica, Universidade de S\~ao Paulo, Rua do Mat\~ao 1010, S\~ao Paulo, SP 05508-090, Brasil.}
\email{fabricioyarro@gmail.com}
\date{\today}
\title{Left invariant special K\"ahler structures}
\begin{document}
\maketitle

\begin{abstract}
We construct left invariant special K\"ahler structures on the cotangent bundle of a flat pseudo-Riemannian Lie group. We introduce the twisted cartesian product of two special K\"ahler Lie algebras according to two linear representations by infinitesimal K\"ahler transformations. We also exhibit a double extension process of a special K\"ahler Lie algebra which allows us to get all simply connected special K\"ahler Lie groups with bi-invariant symplectic connections. All Lie groups constructed by performing this double extension process can be identified with a subgroup of symplectic (or K\"ahler) affine transformations of its Lie algebra containing a nontrivial  $1$-parameter subgroup formed by central translations. We show a characterization of left invariant flat special K\"ahler structures using \'etale K\"ahler affine representations, exhibit some immediate consequences of the constructions mentioned above, and give several non-trivial examples.
\end{abstract}

\tableofcontents
\section{Introduction}
Throughout this paper we will be dealing with the following geometric object:
\begin{definition}\cite{F}\label{MainDefinition}
	A \emph{special K\"ahler structure} on a smooth manifold $M$ is a triple $(\omega,J,\nabla)$ where $\omega$ is a symplectic form, $J$ is an integrable almost complex structure, and $\nabla$ is a flat and torsion free connection on $M$ such that:	
	\begin{enumerate}
		\item[$\iota.$] $(M,\omega,J)$ is a pseudo-K\"ahler manifold, that is, $k(X,Y)=\omega(X,JY)$ defines a pseudo-Riemannian metric on $M$,
		\item[$\iota\iota.$] $\nabla$ is symplectic with respect to $\omega$, that is, $\nabla \omega=0$ and
		\item[$\iota\iota\iota.$] the following formula holds true
		\begin{equation}\label{Eq1}
		(\nabla_XJ)Y=(\nabla_YJ)X,\qquad X,Y\in\mathfrak{X}(M).
		\end{equation}
	\end{enumerate}
	The quadruple $(M,\omega,J,\nabla)$ will be called a \emph{special K\"ahler manifold}.
\end{definition}
A flat and torsion free connection $\nabla$ will be called a \emph{flat affine connection}. The fact that $\nabla$ is torsion free implies that identity \eqref{Eq1} is equivalent to require
\begin{equation}\label{Eq2}
J[X,Y]=\nabla_X(JY)-\nabla_Y(JX),\qquad X,Y\in\mathfrak{X}(M).
\end{equation}

Special cases of these kind of structures are pseudo-K\"ahler manifolds for which the Levi--Civita connection $\nabla$ associated to $k$ is flat. The identity $\iota\iota\iota.$ trivially holds because $\nabla J=0$. The converse is also true in the sense that if $\nabla$ is a flat affine symplectic connection and $\nabla J=0$, then $\nabla$ is the Levi-Civita connection associated to $k$ and $M$ is locally isometric to $\mathbb{C}^n$. These kind of manifolds are called \emph{flat special K\"ahler manifolds} and they have been characterized in \cite{BC1}. It is important to notice that there are no non-flat complete special K\"ahler manifolds, that is, if $k$ is complete then the Levi--Civita connection associated to it is flat; see \cite{L}. 

The notion of special K\"ahler manifold initially appeared in physics and it has its origins in certain supersymmetric field theories \cite{dWVP}. More specifically, affine special K\"ahler manifolds are exactly the allowed targets for the scalars of the vector multiplets of field theories with $N = 2$ rigid supersymmetry on 4-dimensional Minkowski space-time. Also, there exists a tight mathematical relationship between special real manifolds, which come from Hessian geometry and were introduced in \cite{AC}, and special K\"ahler manifolds. This relationship is given through the intrinsic description of an $r$-map which, on the physics side, corresponds to the dimensional reduction of rigid vector multiplets from 5 to 4 space-time dimensions; see \cite{AC}. Two important features of special K\"ahler manifolds are that their cotangent bundle carries the structure of a hyper-K\"ahler manifold and they are base of algebraic completely integrable systems; see \cite{F}. Some properties and the review of several interesting applications in physics and mathematics where special K\"ahler manifolds appear can be found in \cite{C}.

In this paper we mainly focus in giving three methods for constructing left invariant special K\"ahler structures on simply connected Lie groups. These kind of geometric objects will be called \emph{special K\"ahler Lie groups}. Accordingly, the infinitesimal objects associated to these kind of Lie groups will be called a \emph{special K\"ahler Lie algebras}.

Firstly, we get left invariant special K\"ahler structures on the cotangent bundle of a simply connected flat pseudo-Riemannian Lie group verifying the condition $\nabla^H J=0$. Here $\nabla^H$ denotes the Hess connection associated to the natural left invariant bi-Lagrangian transverse foliations that admits the cotangent bundle of any simply connected flat affine Lie group. We exhibit the conditions needed to ensure when this connection is geodesically complete. The collection of groups obtained by using this method are examples of flat special K\"ahler manifolds.

Secondly, we introduce the twisted cartesian product of two special special K\"ahler Lie algebras according to two Lie algebra representations by infinitesimal K\"ahler transformations. This method allows us to obtain examples of non-trivial left invariant special K\"ahler structures  in every even dimension $\geq 4$ since the only example in dimension $2$ is the trivial one, namely, $(\mathbb{R}^2,\omega_0,J_0,\nabla^0)$. We prove that every special K\"ahler algebra that admits a complex and non-degenerate left ideal can be obtained as the twisted cartesian product of two natural special K\"ahler Lie subalgebras.

Thirdly, we give a double extension process of a special K\"ahler Lie algebra via a real line and according to an infinitesimal linear symplectomorphism which defines a derivation of a left symmetric algebra and commutes with the complex structure. This double extension process gives us all simply connected special K\"ahler Lie groups with bi-invariant symplectic connections. Moreover, all Lie groups constructed as a double extension can be identified with a subgroup of symplectic (or K\"ahler) affine transformations of its Lie algebra containing a nontrivial  $1$-parameter subgroup formed by central translations. We end the paper by exhibiting a $1$-dimensional family of left invariant special K\"ahler structures parametrized by $\mathbb{R}$ in dimension $6$ with associated metric having signature $(4,2)$ and verifying $\nabla J\neq 0$.

We also show a characterization of left invariant flat special K\"ahler structures, give several non-trivial examples, and show some immediate consequences of the constructions mentioned above.
\section{Left invariant special K\"ahler structures and some examples}
For a general understanding of the basic concepts about symplectic Lie groups, left symmetric algebras, special K\"ahler manifolds, and their related topics, the reader is recommended to visit for instance the references \cite{BC2,Ba,Bu,C,K,M,V}. Let us assume that $M=G$ is a connected Lie group with Lie algebra $\mathfrak{g}$ and that all the geometric object that we are dealing with are left invariant, that is, the left multiplications in $G$ determine: symplectomorphisms of $(G,\omega)$, affine transformations of $(G,\nabla)$, and holomorphic maps of $(G,J)$.
\begin{definition}
A connected Lie group $G$ is called a \emph{special K\"ahler Lie group} if it can be equipped with a special K\"ahler structure $(\omega,J,\nabla)$ where $\omega$, $J$, and $\nabla$ are left invariant.
\end{definition}
 
Note that the pseudo-Riemannian metric $k$ on $G$ induced by $(\omega,J)$ is necessarily left invariant. The infinitesimal object associated to a special K\"ahler Lie group is the following:
\begin{definition}\label{definitionLie}
A real finite dimensional Lie algebra $\mathfrak{g}$ is called a \emph{special K\"ahler Lie algebra} if it can be equipped with a triple $(\omega,j,\cdot)$ where $\omega\in\wedge^2\mathfrak{g}^\ast$ is a non-degenerate scalar 2-cocycle, $j:\mathfrak{g}\to \mathfrak{g}$ is an integrable complex structure, and $\cdot:\mathfrak{g}\times \mathfrak{g}\to \mathfrak{g}$ is a left symmetric product such that for all $x,y,z\in \mathfrak{g}$ we have:
\begin{enumerate}
\item[$\iota.$] $(\mathfrak{g},\omega,j)$ is a pseudo-K\"ahler Lie algebra,
\item[$\iota\iota.$] $[x,y]=x\cdot y-y\cdot x$,
\item[$\iota\iota\iota.$] $\omega(x\cdot y,z)+\omega(y,x\cdot z)=0$, and
\item[$\iota\nu.$] $j\in Z^1_L(\mathfrak{g},\mathfrak{g})$.
\end{enumerate}
Here $Z^1_L(\mathfrak{g},\mathfrak{g})$ denotes the space of Lie algebra 1-cocycles with respect to the linear representation $L:\mathfrak{g}\to \mathfrak{gl}(\mathfrak{g})$ defined by $L_x(y):=x\cdot y$.
\end{definition}

For the purpose of this paper will be important to have the following formulas and definitions in mind:
\begin{itemize}
\item $\omega$ is a scalar 2-cocycle if it verifies the formula
$$\oint\omega([x,y],z)=\omega([x,y],z)+\omega([y,z],x)+\omega([z,x],y)=0,\qquad x,y,z\in \mathfrak{g}.$$
\item $\cdot:\mathfrak{g}\times \mathfrak{g}\to \mathfrak{g}$ is a left symmetric product on $\mathfrak{g}$ if it satisfies
$$x\cdot(y\cdot z)-(x\cdot y)\cdot z = y\cdot(x\cdot z)-(y\cdot x)\cdot z,\qquad x,y,z\in\mathfrak{g}.$$
If $(x,y,z):=x\cdot(y\cdot z)-(x\cdot y)\cdot z$ is the associator of $x$, $y$, and $z$ in $\mathfrak{g}$, the last identity means that $(x,y,z)=(y,x,z)$.
\item $j\in Z^1_L(\mathfrak{g},\mathfrak{g})$ if
$$j([x,y])=L_x(j(y))-L_y(j(x)),\qquad x,y\in\mathfrak{g}.$$
\item As $(\omega,j)$ defines a pseudo-K\"ahler structure on $\mathfrak{g}$ the formula
$$k(x,y)=\omega(x,j(y)),\qquad x,y\in\mathfrak{g}$$
defines a non-degenerate symmetric bilinear form on $\mathfrak{g}$, that is, a scalar product.
\end{itemize}
\begin{remark}
The data $(\mathfrak{g},\omega,\cdot)$ verifying the formulas $\iota\iota.$ and $\iota\iota\iota.$ from Definition \ref{definitionLie} is called a \emph{flat affine symplectic Lie algebra}. We will use the constructions introduced in \cite{Au,V} of these kind of objects for getting examples of left invariant special K\"ahler structures.
\end{remark}

The following result is clear:
\begin{lemma}
There exists a bijective correspondence between simply connected special K\"ahler Lie groups and special K\"ahler Lie algebras.
\end{lemma}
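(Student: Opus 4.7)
The plan is to package three well-known correspondences (symplectic, complex, and affine) and check that the compatibility conditions of Definition \ref{MainDefinition} translate exactly into conditions $\iota$--$\iota\nu$ of Definition \ref{definitionLie}. Throughout I identify $\mathfrak{g}$ with the space of left invariant vector fields on $G$ via the evaluation map at the identity $e\in G$.

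First, I would set up the two maps. Given a simply connected special K\"ahler Lie group $(G,\omega,J,\nabla)$ I define on $\mathfrak{g}$ the form $\omega_e\in\wedge^2\mathfrak{g}^\ast$, the endomorphism $j:=J_e$, and the bilinear product $x\cdot y:=(\nabla_{X}Y)_e$, where $X,Y$ are the left invariant extensions of $x,y$. Conversely, given $(\mathfrak{g},\omega,j,\cdot)$, I take the unique simply connected Lie group $G$ with Lie algebra $\mathfrak{g}$ and extend $\omega$, $j$ left invariantly, and declare $\nabla$ to be the unique left invariant connection satisfying $(\nabla_X Y)_g=dL_g(x\cdot y)$ for left invariant vector fields. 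The Lie correspondence for simply connected groups guarantees that we recover $G$ from $\mathfrak{g}$ and vice versa, so it remains to check that the structures match under these assignments.

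Second, I would verify the four axioms one at a time, quoting standard facts. The identity $d\omega=0$ for a left invariant $2$-form is equivalent to the Chevalley--Eilenberg cocycle condition $\oint \omega([x,y],z)=0$, which gives axiom $\iota$ together with the fact that $(\mathfrak{g},\omega,j)$ is a pseudo-K\"ahler Lie algebra (here I use that $k(x,y)=\omega(x,j(y))$ is non-degenerate symmetric iff the corresponding tensor field on $G$ is a pseudo-Riemannian metric). Torsion freeness of $\nabla$ translates into $[x,y]=x\cdot y-y\cdot x$, giving axiom $\iota\iota$. Flatness of $\nabla$ combined with torsion freeness gives the left symmetric identity $(x,y,z)=(y,x,z)$, since the curvature of a left invariant connection is controlled by the associator of $\cdot$. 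The condition $\nabla\omega=0$ at the level of left invariant tensors is exactly axiom $\iota\iota\iota$. Finally, integrability of $J$ together with torsion freeness of $\nabla$ and formula \eqref{Eq2} yield $j([x,y])=L_x(j(y))-L_y(j(x))$, which is axiom $\iota\nu$; conversely, the Newlander--Nirenberg theorem (or a direct check on the Nijenhuis tensor of a left invariant $J$) converts integrability of $j$ on $\mathfrak{g}$ into integrability of $J$ on $G$.

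Third, I would check that the two assignments are mutually inverse. This is immediate from left invariance: a left invariant tensor field is determined by its value at $e$, and a left invariant connection is determined by the bilinear product it induces on $\mathfrak{g}$. Simply connectedness is used only in going from $\mathfrak{g}$ to $G$ (Lie's third theorem), and in ensuring that the integrated complex structure is globally defined. There is no real obstacle here; the only step that requires a little care is the translation of the special K\"ahler identity \eqref{Eq1} into the $1$-cocycle condition $\iota\nu$, since one must combine the formula $(\nabla_X J)Y=\nabla_X(JY)-J\nabla_X Y$ with torsion freeness to rewrite \eqref{Eq1} in the form \eqref{Eq2}, and then evaluate at $e$.
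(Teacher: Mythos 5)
Your proposal is correct and is exactly the standard unpacking that the paper itself relies on: the paper states this lemma without proof (``The following result is clear''), and your argument -- evaluating left invariant tensors at the identity, identifying flat torsion-free left invariant connections with left symmetric products, and translating each axiom of Definition \ref{MainDefinition} into the corresponding item of Definition \ref{definitionLie} -- is the intended justification. No gaps.
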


The model space of special K\"ahler Lie group is $((\mathbb{R}^{2n},+),\omega_0,J_0,\nabla^0)$ where $\omega_0$, $J_0$, and $\nabla^0$ are the canonical symplectic form, complex structure, and covariant derivative on $\mathbb{R}^{2n}$, respectively.
\begin{lemma}\label{dimension}
If $(G,\omega,J,\nabla)$ is a non-Abelian special K\"ahler Lie group, then $\textnormal{dim}G\geq 4$.
\end{lemma}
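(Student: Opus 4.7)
The plan is to rule out the only dimension below $4$ that is a priori compatible with the existence of a complex structure: $j$ forces $\dim\mathfrak{g}$ to be even, so I only need to show that a $2$-dimensional non-Abelian Lie algebra cannot carry a special K\"ahler structure. Up to isomorphism the only such Lie algebra is $\mathfrak{aff}(1)$, with basis $\{e_1,e_2\}$ and bracket $[e_1,e_2]=e_2$; I will derive a contradiction from axioms $\iota$--$\iota\nu$ of Definition \ref{definitionLie}.

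First, I would shrink the space of candidate left-symmetric products. Axiom $\iota\iota\iota$ places every $L_x$ in $\mathfrak{sp}(\omega,\mathfrak{g})\cong\mathfrak{sl}(2,\mathbb{R})$, so each $L_{e_i}$ is traceless. Writing $e_i\cdot e_j$ in the basis $\{e_1,e_2\}$ and imposing $L_{e_1}e_2-L_{e_2}e_1=e_2$ reduces the product to four unknowns $p,q,s,u$. The two non-trivial associator identities $(e_1,e_2,e_i)=(e_2,e_1,e_i)$ for $i=1,2$ then produce polynomial relations; a short case split on whether $s$ vanishes forces $s=u=0$ and $p\in\{-1,-\tfrac{1}{2}\}$.

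Next, I would impose the $1$-cocycle condition $\iota\nu$. A general $j$ with $j^2=-I$ has the form $j(e_1)=ae_1+be_2$, $j(e_2)=-\tfrac{1+a^2}{b}e_1-ae_2$ with $b\neq 0$. Substituting the restricted values of $p,s,u$ into the $e_1$-component of the identity $j(e_2)=L_{e_1}j(e_2)-L_{e_2}j(e_1)$ yields $(1+a^2)(p-1)=0$; since $p\in\{-1,-\tfrac{1}{2}\}$ and $1+a^2>0$, this is impossible. Hence no special K\"ahler structure exists on $\mathfrak{aff}(1)$, completing the proof.

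The main obstacle is not conceptual but a matter of bookkeeping, as several parameters must be tracked through three layers of polynomial identities. The structural observation that keeps the argument manageable is that the left-symmetric identity combined with axiom $\iota\iota\iota$ makes $L:\mathfrak{g}\to\mathfrak{gl}(\mathfrak{g})$ a Lie algebra homomorphism into $\mathfrak{sp}(\omega)=\mathfrak{sl}(2,\mathbb{R})$, which sharply restricts the possible left multiplications in a $2$-dimensional symplectic vector space.
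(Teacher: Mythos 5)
Your proof is correct, and it takes a more self-contained route than the paper. The paper disposes of dimension $2$ by appealing to the classification (cited from the hypersymplectic literature) of left invariant flat affine symplectic connections on $(\mathbb{R}^2,\omega_0)$ and on $(\textnormal{Aff}(\mathbb{R})_0,\omega)$ -- two connections on the latter, up to isomorphism -- and then verifying that identity \eqref{Eq1} fails for the \emph{natural} complex structure $J(e_1^+)=e_2^+$ in each case. You instead rederive the admissible left symmetric products on $\mathfrak{aff}(1)$ from scratch (your normal forms $p\in\{-1,-\tfrac12\}$ recover exactly the two connections $\nabla$ and $\overline{\nabla}$ of the paper's example, with one residual off-diagonal parameter that, as you implicitly note, never enters the $e_1$-component of the cocycle identity and so is harmless) and then test the $1$-cocycle condition against an \emph{arbitrary} $j$ with $j^2=-\mathrm{Id}$. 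This buys two things: you avoid any reliance on an external classification stated only up to isomorphism, and you close a small logical gap in the examples-based argument, since an isomorphism normalizing $(\omega,\nabla)$ need not carry one candidate complex structure to another, so strictly speaking all compatible $j$ must be excluded, which is precisely what your computation does. It is also worth noting that your contradiction uses only $j^2=-\mathrm{Id}$ and axiom $\iota\nu.$ of Definition \ref{definitionLie}, not the K\"ahler compatibility or the signature of $k$, so the argument in fact shows that $\mathfrak{aff}(1)$ admits no flat affine symplectic structure with \emph{any} $1$-cocycle complex structure whatsoever.
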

The previous Lemma is consequence of the following examples. Let $x^+$ denote the left invariant vector field associated to an element $x\in\mathfrak{g}$. The classification of left invariant flat affine symplectic connections in dimension $2$ can be found in \cite{An}.
\begin{example}
Aside $\nabla^0$, up to isomorphism there is another left invariant flat affine symplectic connection on $((\mathbb{R}^{2},+),\omega_0)$ and it is given by
$$\nabla_{e_1^+}e_1^+=e_2^+\qquad\textnormal{and}\qquad \nabla_{e_1^+}e_2^+=\nabla_{e_2^+}e_1^+=\nabla_{e_2^+}e_2^+=0.$$
It is easy to see that $(\nabla_{e_1^+}J_0)e_2^+\neq (\nabla_{e_2^+}J_0)e_1^+$. Thus $(\omega_0,J_0,\nabla)$ does not define a left invariant structure of special K\"ahler Lie group on $\mathbb{R}^2$.
\end{example}

\begin{example}
Let $G=\textnormal{Aff}(\mathbb{R})_0$ denote the connected component of the identity of the group of affine transformations of the real line with its natural left invariant symplectic form $\omega=\dfrac{1}{x^2}dx\wedge dy$. Up to isomorphism, there are two left invariant flat affine symplectic connections on $(\textnormal{Aff}(\mathbb{R})_0,\omega)$ and these are given by
\begin{enumerate}
\item[$\iota.$] $\nabla_{e_1^+}e_1^+=-e_1^+\qquad \nabla_{e_1^+}e_2^+=e_2^+\qquad\textnormal{and}\qquad \nabla_{e_2^+}e_1^+=\nabla_{e_2^+}e_2^+=0$, and
\item[$\iota\iota.$] $\overline{\nabla}_{e_1^+}e_1^+=-\dfrac{1}{2}e_1^+\qquad \overline{\nabla}_{e_1^+}e_2^+=\dfrac{1}{2} e_2^+\qquad \overline{\nabla}_{e_2^+}e_1^+=-\dfrac{1}{2} e_2^+ \qquad\textnormal{and}\qquad\overline{\nabla}_{e_2^+}e_2^+=0$.
\end{enumerate}
Here $e_1^+=x\dfrac{\partial}{\partial x}$ and $e_2^+=x\dfrac{\partial}{\partial y}$ determine a basis for the left invariant vector fields on $\textnormal{Aff}(\mathbb{R})_0$. The natural left invariant complex structure on $\textnormal{Aff}(\mathbb{R})_0$ is defined as $J(e_1^+)=e_2^+$. This is clearly integrable and moreover
$$(\nabla_{e_1^+}J)e_2^+\neq (\nabla_{e_2^+}J)e_1^+ \qquad\textnormal{and}\qquad (\overline{\nabla}_{e_1^+}J)e_2^+\neq (\overline{\nabla}_{e_2^+}J)e_1^+.$$
Therefore, $(\omega,J,\nabla)$ and $(\omega,J,\overline{\nabla})$ do not define left invariant special K\"ahler structures on $\textnormal{Aff}(\mathbb{R})_0$.
\end{example}
The following are three positive examples of left invariant special K\"ahler structures:
\begin{example}[Dimension 4]\label{ExampleD4}
Consider the Lie group $G_1=\mathbb{R}\ltimes_\rho \mathbb{R}^3$ determined by the semi-direct product of $(\mathbb{R},+)$ with $(\mathbb{R}^3,+)$ by means of the Lie group homomorphism $\rho:\mathbb{R}\to \textnormal{GL}(\mathbb{R}^3)$ defined by
$$\rho(t)=\left(\begin{array}{ccc}
e^t & 0 &0\\
0 & e^t & 0 \\
0 & 0 & e^{-t}
\end{array}%
\right).$$
The product in $G_1$ is explicitly given as 
$$(t,x,y,z)\cdot(t',x',y',z')=(t+t',e^tx'+x,e^ty'+y,e^{-t}z'+z).$$
A basis for the left invariant vector fields on $G_1$ is formed by
$$e_1^+=\dfrac{\partial}{\partial t},\qquad  e_2^+=e^t\dfrac{\partial}{\partial x},\qquad  e_3^+=e^t\dfrac{\partial}{\partial y},\qquad  e_4^+=e^{-t}\dfrac{\partial}{\partial z}.$$
Thus, the Lie algebra of $G_1$ is isomorphic to the vector space $\mathfrak{g}_1\cong \textnormal{Vect}_\mathbb{R}\lbrace e_1,e_2,e_3,e_4\rbrace$ with nonzero Lie brackets
$$[e_1,e_2]=e_2,\qquad [e_1,e_3]=e_3,\qquad\textnormal{and}\qquad[e_1,e_4]=-e_4.$$
The following data $(\omega,J,\nabla)$ defines a structure of special K\"ahler Lie group on $G_1$:
\begin{enumerate}
\item[$\iota.$] left invariant symplectic form $\omega=e^{-t}dy\wedge dt+dz\wedge dx$,
\item[$\iota\iota.$] left invariant complex structure $J(e_3^+)=e_2^+$ and $J(e_4^+)=e_1^+$, and
\item[$\iota\iota\iota.$] left invariant flat affine symplectic connection 
$$\nabla_{e_1^+}e_1^+=-e_1^+\qquad \nabla_{e_1^+}e_2^+=e_2^+\qquad \nabla_{e_1^+}e_3^+=e_3^+\qquad \nabla_{e_1^+}e_4^+=-e_4^+,\qquad\textnormal{and} $$
$$\nabla_{e_2^+}=\nabla_{e_3^+}=\nabla_{e_4^+}=0.$$
\end{enumerate}
\begin{remark}
The signature of the pseudo-Riemannian metric determined by $(\omega,J)$ is $(2,2)$ and $\textnormal{Aff}(\mathbb{R})_0$ can be identified with a Lagrangian Lie subgroup in $(G_1,\omega)$. Moreover, $\nabla J=0$.
\end{remark}
\end{example}

\begin{example}[Dimension 6]\label{ExampleD6}
Let us now consider the Lie group $G_2=\mathbb{R}\ltimes_\rho \mathbb{R}^5$ determined by the semi-direct product of $(\mathbb{R},+)$ with $(\mathbb{R}^5,+)$ by means of the Lie group homomorphism $\rho:\mathbb{R}\to \textnormal{GL}(\mathbb{R}^5)$ defined by
$$\rho(t)=\left(\begin{array}{ccccc}
1 & 0 &0 & 0 & 0\\
-t & 1 &0 & 0 & 0\\
0 & 0 & 1 & 0 & t\\
0 & 0 &-t & 1 & -t^2/2\\
0 & 0 &0 & 0 & 1
\end{array}%
\right).$$
The product in $G_2$ is explicitly given as 
$$(t,x,y,z,u,v)\cdot(t',x',y',z',u',v')=\left(t+t',x'+x,-tx'+y'+y,z'+tv'+z,-tz'+u'-t^2/2v'+u,v+v'\right).$$
A basis for the left invariant vector fields on $G_2$ is formed by
$$e_1^+=\dfrac{\partial}{\partial x}-t\dfrac{\partial}{\partial y},\qquad  e_2^+=\dfrac{\partial}{\partial t},\qquad  e_3^+=\dfrac{\partial}{\partial y},\qquad  e_4^+=\dfrac{\partial}{\partial z}-t\dfrac{\partial}{\partial u},$$
$$e_5^+=\dfrac{\partial}{\partial u},\qquad e_6^+=t\dfrac{\partial}{\partial z}-t^2/2\dfrac{\partial}{\partial u}+\dfrac{\partial}{\partial v}.$$
Therefore, the Lie algebra of $G_2$ is isomorphic to the vector space $\mathfrak{g}_2\cong \textnormal{Vect}_\mathbb{R}\lbrace e_1,e_2,e_3,e_4,e_5,e_6\rbrace$ with nonzero Lie brackets
$$[e_1,e_2]=e_3,\qquad [e_2,e_4]=-e_5,\qquad\textnormal{and}\qquad[e_2,e_6]=e_4.$$
The following data $(\omega,J,\nabla)$ defines a structure of special K\"ahler structure on $G_2$:
\begin{enumerate}
	\item[$\iota.$] left invariant symplectic form $\omega=tdz\wedge dt +du\wedge dt+t^2/2dt\wedge dv+dz\wedge dx+dv\wedge dy$,
	\item[$\iota\iota.$] left invariant complex structure $J(e_4^+)=e_1^+$, $J(e_5^+)=e_3^+$ and $J(e_6^+)=e_2^+$, and 
	\item[$\iota\iota\iota.$] left invariant flat affine symplectic connection 
	$$\nabla_{e_2^+}e_1^+=-e_3^+\qquad \nabla_{e_2^+}e_2^+=e_1^+\qquad \nabla_{e_2^+}e_4^+=-e_5^+\qquad \nabla_{e_2^+}e_6^+=e_4^+,\qquad \nabla_{e_2^+}e_3^+=\nabla_{e_2^+}e_5^+=0$$
	$$\textnormal{and}\qquad\nabla_{e_1^+}=\nabla_{e_3^+}=\nabla_{e_4^+}=\nabla_{e_5^+}=\nabla_{e_6^+}=0.$$
\end{enumerate}
\begin{remark}
The signature of the pseudo-Riemmanian metric determined by $(\omega,J)$ is $(2,4)$ and the 3-dimensional Heisenberg group $H_3$ can be identified with a Lagrangian Lie subgroup in $(G_2,\omega)$. Moreover, $\nabla J=0$.
\end{remark}
\end{example}

The groups constructed in Examples \ref{ExampleD4} and \ref{ExampleD6} are examples of flat special K\"ahler manifolds which have been characterized in \cite{BC1}. We got them using the method to be introduced in next section.
 
The following is an example of special K\"ahler Lie group in dimension $4$ for which $\nabla J\neq 0$. It will be important for getting other interesting examples. 
\begin{example}[Dimension $4$ with $\nabla J\neq 0$]\label{KeyExample1}
Consider the Lie group $G_3=\mathbb{R}^2\ltimes_\rho \mathbb{R}^2$ determined by the semi-direct product of $(\mathbb{R}^2,+)$ with $(\mathbb{R}^2,+)$ by means of the Lie group homomorphism $\rho:\mathbb{R}^2\to \textnormal{GL}(\mathbb{R}^2)$ defined by
$$\rho(t,s)=\left(\begin{array}{cc}
t+s+1 & t+s \\
-(t+s) & -(t+s)+1
\end{array}%
\right).$$
The product in $G_3$ is explicitly given as 
$$(t,s,x,y)\cdot(t',s',x',y')=(t+t',s+s',(t+s+1)x'+(t+s)y'+x,-(t+s)x'+(1-(t+s))y'+y).$$
A basis for the left invariant vector fields on $G_3$ is formed by
$$e_1^+=\dfrac{\partial}{\partial t},\qquad  e_2^+=(t+s+1)\dfrac{\partial}{\partial x}-(t+s)\dfrac{\partial}{\partial y}$$
$$e_3^+=\dfrac{\partial}{\partial s},\qquad  e_4^+=(t+s)\dfrac{\partial}{\partial x}+(1-(t+s))\dfrac{\partial}{\partial y}.$$
Therefore, the Lie algebra of $G_3$ is isomorphic to the vector space $\mathfrak{g}_3\cong \textnormal{Vect}_\mathbb{R}\lbrace e_1,e_2,e_3,e_4\rbrace$ with nonzero Lie brackets
$$[e_1,e_2]=[e_1,e_4]=[e_3,e_2]=[e_3,e_4]=e_2-e_4.$$
The following data $(\omega,J,\nabla)$ defines a structure of special K\"ahler structure on $G_3$:
\begin{enumerate}
	\item[$\iota.$] left invariant symplectic form $\omega=(1-(t+s))dt\wedge dx+(t+s)(dy\wedge dt+dx\wedge ds)+(1+t+s)dy\wedge ds$,
	\item[$\iota\iota.$] left invariant complex structure $J(e_1^+)=e_2^+$ and $J(e_3^+)=e_4^+$, and
	\item[$\iota\iota\iota.$] left invariant flat affine symplectic connection 
	\begin{center}
	\begin{tabular}{c|c|c|c|c}
		$\nabla$	& $e_1^+$ & $e_2^+$ & $e_3^+$ & $e_4^+$ \\
		\hline
		$e_1^+$	& $-e_1^++e_3^+$ & $e_2^+-e_4^+$ & $-e_1^++e_3^+$ & $e_2^+-e_4^+$ \\
		\hline
		$e_2^+$	& $0$ & $2e_1^+-2e_3^+$  & $0$ & $2e_1^+-2e_3^+$ \\
		\hline
		$e_3^+$	& $-e_1^++e_3^+$ & $e_2^+-e_4^+$ & $-e_1^++e_3^+$ & $e_2^+-e_4^+$ \\
		\hline
		$e_4^+$	& $0$ & $2e_1^+-2e_3^+$  & $0$ & $2e_1^+-2e_3^+$ \\
	
	\end{tabular}
	\end{center}
\end{enumerate}
\begin{remark}
	The signature of the pseudo-Riemmanian metric determined by $(\omega,J)$ is $(2,2)$. Moreover, given that $\nabla_{e_1^+}\circ J\neq J\circ \nabla_{e_1^+}$, we obtain that $\nabla J\neq 0$. 
\end{remark}
\end{example}

\subsection{\'Etale K\"ahler affine representations and the case $\nabla J=0$}
A finite dimensional real vector space $(V,\omega,J)$ is called a \emph{K\"ahler vector space} if $(V,\omega)$ is a symplectic vector space and $J:V\to V$ is a linear complex structure on $V$ such that
\begin{enumerate}
\item[$\iota.$] $\omega(J(x),J(y))=\omega(x,y)$, and
\item[$\iota\iota.$] $k(x,y)=\omega(x,J(y))$ is a scalar product on $V$.
\end{enumerate}

Let $\textnormal{Sp}(V,\omega)$ and $\textnormal{GL}(V,J)$ denote the groups of linear symplectomorphisms and linear complex transformations of $(V,\omega)$ and $(V,J)$, respectively. We define the group of \emph{linear K\"ahler transformations} of $(V,\omega,J)$ as
$$\textnormal{KL}(V,\omega,J):=\textnormal{Sp}(V,\omega)\cap \textnormal{GL}(V,J).$$
This is a Lie group with Lie algebra $\mathfrak{kl}(V,,\omega,J)=\mathfrak{sp}(V,\omega)\cap \mathfrak{gl}(V,J)$ where $\mathfrak{sp}(V,\omega)$ and $\mathfrak{gl}(V,J)$ are the Lie algebras of $\textnormal{Sp}(V,\omega)$ and $\textnormal{GL}(V,J)$, respectively. More precisely, $\mathfrak{kl}(V,,\omega,J)$ is composed by elements $A\in \mathfrak{gl}(V)$ verifying both
$$\omega(A(x),y)+\omega(x,A(y))=0\qquad \textnormal{and}\qquad AJ=JA.$$
The group of \emph{K\"ahler affine transformations} of $(V,\omega,J)$ is defined as the semi-direct product $V\rtimes_{\textnormal{Id}}\textnormal{KL}(V,\omega,J)$ with respect to the identity representation $\textnormal{Id}:\textnormal{KL}(V,\omega,J)\hookrightarrow \textnormal{GL}(V)$. 

Motivated by the definition of \'etale affine representations given in \cite{K} and \cite{M}, we set up the following definition:
\begin{definition}
An \emph{\'etale K\"ahler affine representation} of a Lie group $G$ is a Lie group homomorphism $\rho:G\to V\rtimes_{\textnormal{Id}}\textnormal{KL}(V,\omega,J)$ such that the left action of $G$ on $V$ defined by $g\cdot v:=\rho(g)(v)$ for all $g\in G$ and $v\in V$, admits a point of open orbit and discrete isotropy.
\end{definition}
As the Lie algebras of a connected Lie group $G$ and its universal covering Lie group $\widetilde{G}$ are isomorphic, from now on we assume that $G$ is simply connected. The following is a characterization of left invariant flat special K\"ahler structures.
\begin{theorem}\label{etale}
Let $G$ be a simply connected Lie group with Lie algebra $\mathfrak{g}$. Then $G$ can be equipped with a structure of special K\"ahler Lie group $(\omega,J,\nabla)$ with $\nabla J=0$ if and only if it admits an \'etale K\"ahler affine representation by a K\"ahler vector space $(V,\omega,J)$.
\end{theorem}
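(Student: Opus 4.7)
The plan is to build a Kähler analogue of the classical Koszul--Medina correspondence (see \cite{K,M}) between left-invariant flat affine connections on a simply connected Lie group and étale affine representations. The bridge in both directions is the left-symmetric algebra $(\mathfrak{g},\cdot)$ associated with $\nabla$; the essential new input is that the hypothesis $\nabla J=0$ translates infinitesimally into $L_x J=JL_x$, which combined with axiom $\iota\iota\iota$ of Definition \ref{definitionLie} forces every $L_x$ to lie in $\mathfrak{kl}(\mathfrak{g},\omega,J)$. This permits the natural embedding that produces the étale Kähler affine representation and, conversely, allows the pullback of the flat Kähler structure on a Kähler vector space to give the special Kähler data on $G$.

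For the forward direction, I start from a simply connected special Kähler Lie group $(G,\omega,J,\nabla)$ with $\nabla J=0$ and take $V=\mathfrak{g}$ equipped with its Kähler vector space structure $(\omega,J)$. By the two facts above, each $L_x$ lies in $\mathfrak{kl}(V,\omega,J)$, so $q:\mathfrak{g}\to V\rtimes \mathfrak{kl}(V,\omega,J)$, $q(x)=(x,L_x)$, is well defined; it is a Lie algebra homomorphism by the torsion-free axiom $\iota\iota$ for the translation component and the standard left-symmetric identity $[L_x,L_y]=L_{[x,y]}$ for the linear component. Since $G$ is simply connected, $q$ integrates to a Lie group morphism $\rho:G\to V\rtimes_{\mathrm{Id}}\mathrm{KL}(V,\omega,J)$. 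Finally, the fundamental vector field of $q(x)$ at $0\in V$ equals $x$, so the orbit map $\phi:G\to V$, $\phi(g)=\rho(g)\cdot 0$, satisfies $d\phi_e=\mathrm{id}_{\mathfrak{g}}$; hence $0$ has open orbit and $0$-dimensional, therefore discrete, stabilizer.

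For the converse, given an étale Kähler affine representation $\rho:G\to V\rtimes_{\mathrm{Id}}\mathrm{KL}(V,\omega,J)$, I choose a point with open orbit and discrete isotropy and, after conjugating $\rho$ by a translation (which belongs to the affine group), assume this point is $0\in V$. The orbit map $\phi:G\to V$, $\phi(g)=\rho(g)\cdot 0$, is then a local diffeomorphism satisfying $\phi\circ L_g=\rho(g)\circ\phi$ for every $g\in G$. I pull back the canonical flat Kähler data $(\omega,J,\nabla^{V})$ from $V$ through $\phi$: the pullbacks are left-invariant because of the equivariance of $\phi$ together with the fact that translations and elements of $\mathrm{KL}(V,\omega,J)$ preserve $\omega$, $J$ and $\nabla^{V}$. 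Naturality of pullback transports all axioms of Definition \ref{MainDefinition} from $V$ to $G$, and the identity $\nabla^{V}J=0$ becomes $\nabla J=0$ on $G$, which in particular implies identity \eqref{Eq1}.

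The main obstacle will be the careful bookkeeping of the integration step and the identifications between infinitesimal and global Kähler affine data: verifying that the Lie bracket of $V\rtimes\mathfrak{kl}(V,\omega,J)$ really matches the composition law of affine transformations in $V\rtimes_{\mathrm{Id}}\mathrm{KL}(V,\omega,J)$, and checking that the infinitesimal computation $d\phi_e=\mathrm{id}$ does yield an open orbit (which follows from the inverse function theorem, with the dimension count $\dim V=\dim G$ coming automatically from the combination of open orbit and discrete isotropy). The remaining verifications---that the pulled-back connection is left-invariant and coincides with the one associated with $(\mathfrak{g},\cdot)$---are routine once the equivariance of $\phi$ is in place.
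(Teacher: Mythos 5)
Your argument is correct, and the forward direction is essentially identical to the paper's: both send $x\mapsto (x,L_x)$, use $\nabla J=0\Leftrightarrow [L_x,j]=0$ together with axiom $\iota\iota\iota.$ of Definition \ref{definitionLie} to place $L_x$ in $\mathfrak{kl}(\mathfrak{g},\omega,j)$, integrate by simple connectedness, and observe that the orbit map has invertible differential at the identity, so $0$ has open orbit and discrete isotropy. In the converse the two proofs share the same key object, the equivariant orbital map $\phi\colon G\to V$ (a local diffeomorphism since the orbit is open and the isotropy discrete), but transport the structure differently. The paper works infinitesimally: it conjugates $\omega$, $J$ and the linear parts $f_x$ by the isomorphism $\psi_v=d\phi_e$ to obtain a candidate quadruple $(\mathfrak{g},\widetilde{\omega},j,\cdot)$ and then checks every axiom of a special K\"ahler Lie algebra by hand (left symmetry of $\cdot$ from the cocycle and representation properties of $(q,f)$, the scalar $2$-cocycle identity for $\widetilde{\omega}$, integrability of $j$ and $[j,L_x]=0$ from $f_x\in\mathfrak{kl}(V,\omega,J)$). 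You instead normalize the base point to $0$ by conjugating with a translation and pull back the flat K\"ahler data $(\omega,J,\nabla^{V})$ of $V$ through $\phi$, obtaining all the axioms of Definition \ref{MainDefinition} and $\nabla J=0$ by naturality of pullback under a local diffeomorphism; the price is that left invariance must be argued separately, which you correctly extract from the equivariance $\phi\circ L_g=\rho(g)\circ\phi$ and the fact that K\"ahler affine transformations preserve $\omega$, $J$ and $\nabla^{V}$. Your route is shorter and makes the geometric content plainer; the paper's route has the advantage of producing the explicit Lie algebra data $(\widetilde{\omega},j,\cdot)$ on $\mathfrak{g}$, which is the form in which the correspondence is exploited elsewhere in the text. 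Both arguments are complete.
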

\begin{proof}
Suppose that $(G,\omega^+,J,\nabla)$ is a simply connected special K\"ahler Lie group such that $\nabla J=0$. Let $(\mathfrak{g},\omega,j,\cdot)$ be the special K\"ahler Lie algebra associated to $(G,\omega^+,J,\nabla)$. Namely, if $e$ denotes the identity of $G$, we have that $\omega:=\omega^+_e$, $j:=J_e$, and $x\cdot y=L_x(y):= (\nabla_{x^+}y^+)(e)$. Given that asking for $\nabla J=0$ is equivalent to require that  $L_x\circ j=j\circ L_x$ for all $x\in\mathfrak{g}$, it follows that the map $\theta:\mathfrak{g} \to \mathfrak{g}\rtimes_{\textnormal{id}}\mathfrak{kl}(\mathfrak{g},\omega,j)$ defined by $\theta(x):=(x,L_x)$, is a well defined Lie algebra homomorphism. Thus, passing by the exponential of $G$, we get an \'etale K\"ahler affine representation $\rho\colon G \to \mathfrak{g}\rtimes_{\textnormal{Id}}\textnormal{KL}(\mathfrak{g},\omega,j)$ determined by
$$\rho(\exp_G(x))=\left(\sum_{m=1}^\infty \dfrac{1}{m!}(L_x)^{m-1}(x),\sum_{m=0}^\infty \dfrac{1}{m!}(L_x)^m\right),$$
for which clearly $0\in\mathfrak{g}$ is a point of open orbit and discrete isotropy.
	
Conversely, let $\rho:G\to V\rtimes_{\textnormal{Id}}\textnormal{KL}(V,\omega,J)$, defined as $\rho(g):=(Q(g),F_g)$ for all $g\in G$, be an \'etale K\"ahler affine representation of $G$. Let $v\in V$ be a point of open orbit and discrete isotropy. This implies that the orbital map $\pi\colon G\to \textnormal{Orb}(v)$ defined by 
$g\mapsto Q(g)+F_g(v)$ is a local diffeomorphism. Differentiating at the identity of $G$, 
we obtain a Lie algebra homomorphism $\theta\colon \mathfrak{g} \to V\rtimes_{\textnormal{id}}\mathfrak{kl}(V,\omega,J)$ given by $x \mapsto (q(x),f_x)$, where the linear map  $\psi_v\colon \mathfrak{g} \to V$ defined by $x \mapsto q(x)+f_x(v)$ is a linear isomorphism; see \cite{M}. Moreover, the map $f:\mathfrak{g}\to \mathfrak{kl}(V,\omega,J)$ is also a Lie algebra homomorphism and $q:\mathfrak{g}\to V$ is a Lie algebra 1-cocycle with respect to the linear representation $f$.
Let us now define the following objects on $\mathfrak{g}$:
\begin{enumerate}
\item[$\iota.$] the skew-symmetric bilinear form $\widetilde{\omega}(x,y):=\omega(\psi_v(x),\psi_v(y))$,
\item[$\iota\iota.$] the product $x\cdot y=L_x(y):=(\psi_v^{-1}\circ f_x\circ \psi_v)(y)$, and
\item[$\iota\iota\iota.$] the complex structure $j(x):=(\psi_v^{-1}\circ J\circ \psi_v)(x)$.
\end{enumerate}

The fact that $q$ is a Lie algebra 1-cocycle with respect to $f$ implies that $[x,y]=x\cdot y - y\cdot x$ for all $x,y\in \mathfrak{g}$. As $f$ is a linear representation and $f_x\in \mathfrak{kl}(V,\omega,J)$, we get that $L_{[x,y]}=[L_x,L_y]$. For last two reasons, it follows that $\cdot$ must be a left symmetric product on $\mathfrak{g}$.

Clearly $\widetilde{\omega}$ is non-degenerate. Furthermore, the fact that $f_x\in \mathfrak{kl}(V,\omega,J)$ implies 
\begin{equation}\label{3}
\widetilde{\omega}(L_x(y),z)+\widetilde{\omega}(y,L_x(z))=0,\qquad x,y,z\in\mathfrak{g}.
\end{equation}
Identity \eqref{3} and the fact that $[x,y]=L_x(y)-L_y(x)$ implies that $\omega$ is a 2-cocycle; visit \cite{V}. 

Finally, a straightforward computation allows us to conclude that the fact that $f_x\circ J= J\circ f_x$ for all $x\in \mathfrak{g}$ implies that 
$$[j(x),j(y)]-[x,y]=j[j(x),y]+j[x,j(y)]\qquad \textnormal{and}\qquad (j\circ L_x)(y)=(L_x\circ j)(y)$$
for all $x,y\in \mathfrak{g}$. That is, $j$ is integrable and it satisfies $[j,L_x]=0$ for all $x\in\mathfrak{g}$. 

Note that
$$\widetilde{k}(x,y)=\widetilde{\omega}(x,j(y))=\omega(\psi_v(x), (J\circ \psi_v)(y))=k(\psi_v(x),\psi_v(y)),$$
defines a scalar product on $\mathfrak{g}$ whose signature agrees with the signature of $k$. Hence, the quadruple $(\mathfrak{g},\widetilde{\omega},j,\cdot)$ is a special K\"ahler Lie algebra for which we may induce a left invariant special K\"ahler structure $(\omega,J,\nabla)$ on $G$ such that $\nabla J=0$.
\end{proof}
\subsection{Hessian property}
Let $(M,\nabla)$ be a flat affine manifold. A pseudo-Riemannian metric $k$ on $M$ is said to be \emph{Hessian} with respect to $\nabla$ if using the affine coordinates of $M$ induced by $\nabla$ it can be locally written as $k=\nabla^2\varphi$ where $\varphi$ is a local smooth function. That is,
$$k=\sum_{i,j}\dfrac{\partial^2 \varphi}{\partial x_i \partial x_j}dx^i\otimes dx^j,$$
where $(x_1,\cdots,x_n)$ is a system of local affine coordinates of $M$ induced by $\nabla$; see \cite{SY} for further details. It is well known that the pseudo-metric of a special K\"ahler manifold is indeed Hessian; compare \cite{F}. We will use a different approach to prove this fact for the left invariant case.

When $M=G$ is a connected Lie group and both $\nabla$ and $k$ are left invariant, the triple $(G,\nabla,k)$ is called a \emph{pseudo-Hessian Lie group}. The infinitesimal object associated to a pseudo-Hessian Lie group is the following:
\begin{definition}\cite{S}
Let $(\mathfrak{g},\cdot)$ be a finite dimensional left symmetric algebra. A scalar product $k$ on $\mathfrak{g}$ is said to be \emph{left symmetric} if it verifies
\begin{equation}\label{HessianCondition}
k(x\cdot y-y\cdot x,z)=k(x,y\cdot z)-k(y,x\cdot z),\qquad x,y,z\in\mathfrak{g}.
\end{equation}
Moreover, if $\mathfrak{g}$ is a Lie algebra whose Lie bracket is given by the commutator of $\cdot$ then the triple $(\mathfrak{g},\cdot,k)$ is called a \emph{pseudo-Hessian Lie algebra}.
\end{definition}

\begin{lemma}\cite{S}
There exists a bijective correspondence between simply connected pseudo-Hessian Lie groups and pseudo-Hessian Lie algebras.
\end{lemma}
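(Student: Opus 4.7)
The plan is to follow the usual two-step Lie-theoretic recipe, using as a black box the classical correspondence between simply connected flat affine Lie groups and real finite dimensional left symmetric algebras recalled earlier in the paper. The only genuine content is to show that the Hessian condition on the manifold translates exactly to the algebraic identity \eqref{HessianCondition} on the infinitesimal side.

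First I would set up the forward direction. Given a simply connected pseudo-Hessian Lie group $(G,\nabla,k)$, let $\mathfrak{g}$ be its Lie algebra, define the left symmetric product by $x\cdot y=(\nabla_{x^+}y^+)(e)$ (so that $\nabla$ corresponds to $\cdot$ in the classical correspondence) and put on $\mathfrak{g}$ the scalar product $k_e$. The key observation is that the Hessian condition $k=\nabla^{2}\varphi$ in local affine coordinates is equivalent, on a flat torsion-free manifold, to the Codazzi-type identity
\begin{equation*}
(\nabla_{X}k)(Y,Z)=(\nabla_{Y}k)(X,Z),\qquad X,Y,Z\in\mathfrak{X}(G),
\end{equation*}
since locally such a $\varphi$ exists by the Poincar\'e lemma precisely when $\nabla k$ is symmetric in its first two arguments. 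Evaluating this Codazzi identity on left invariant fields $x^{+},y^{+},z^{+}$, and using that $k(y^{+},z^{+})$ is the constant function $k_{e}(y,z)$, the directional derivative $x^{+}(k(y^{+},z^{+}))$ vanishes, so
\begin{equation*}
(\nabla_{x^{+}}k)(y^{+},z^{+})=-k(x\cdot y,z)-k(y,x\cdot z),
\end{equation*}
and the Codazzi identity reduces, after rearranging, exactly to \eqref{HessianCondition}. Hence $(\mathfrak{g},\cdot,k_{e})$ is a pseudo-Hessian Lie algebra.

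Conversely, starting from $(\mathfrak{g},\cdot,k)$, let $G$ be the unique (up to isomorphism) simply connected Lie group integrating $\mathfrak{g}$. By the classical correspondence for left symmetric algebras, the product $\cdot$ determines a left invariant flat torsion-free connection $\nabla$ on $G$ via $\nabla_{x^{+}}y^{+}=(x\cdot y)^{+}$, while $k$ extends to a left invariant pseudo-Riemannian metric, still denoted $k$. To check the Hessian property, run the previous computation backwards: \eqref{HessianCondition} forces the Codazzi identity for left invariant fields and, since both $\nabla$ and $k$ are left invariant, the identity $(\nabla_{X}k)(Y,Z)=(\nabla_{Y}k)(X,Z)$ holds for arbitrary smooth vector fields. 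Around any point one then chooses affine coordinates induced by $\nabla$, writes $k=\sum_{i,j}k_{ij}\,dx^{i}\otimes dx^{j}$, and concludes from the Codazzi identity that the matrix $(k_{ij})$ is the Hessian of a local potential $\varphi$ by a direct application of the Poincar\'e lemma. Uniqueness of the correspondence is immediate: two simply connected pseudo-Hessian Lie groups giving isomorphic pseudo-Hessian Lie algebras must be isomorphic as flat affine Lie groups by the classical result, and that isomorphism automatically intertwines the metrics since they are determined by their values at the identity.

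The main obstacle, and the only non-formal step, is the equivalence between the pointwise Hessian definition $k=\nabla^{2}\varphi$ and the global Codazzi identity $\nabla k$ being symmetric in the first two arguments; once this is established the rest is a routine transcription between $G$ and $\mathfrak{g}$ via left invariance. Everything else, namely the integration of $(\mathfrak{g},\cdot)$ to $(G,\nabla)$ and the fact that a left invariant scalar product on $\mathfrak{g}$ gives a left invariant metric on $G$, is purely a consequence of simple-connectedness and Lie's theorems.
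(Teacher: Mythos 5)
Your proof is correct. The paper does not actually prove this lemma --- it is quoted from Shima \cite{S} without argument --- and your proposal reconstructs exactly the standard proof from that reference: the local potential condition $k=\nabla^2\varphi$ in affine coordinates is equivalent (via the Poincar\'e lemma, applied twice) to total symmetry of $\nabla k$, and evaluating that Codazzi identity on left invariant fields, where $k(y^+,z^+)$ is constant, yields precisely identity \eqref{HessianCondition}; the rest is the usual integration of left symmetric algebras to flat affine structures on the simply connected group. No gaps.
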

As a consequence of \cite[Lemma. 4.1]{V} we get the following. Let $(G,\omega^+,J^+,\nabla)$ be a special K\"ahler Lie group and let $(\mathfrak{g},\omega,j,\cdot)$ be its respective special K\"ahler Lie algebra. If $k$ is the scalar product on $\mathfrak{g}$ induced by $(\omega,j)$, then we have the relation
$$k(x,y)=\omega(x,j(y)),\qquad x,y\in\mathfrak{g}.$$
Recall that $j\in Z_L^1(\mathfrak{g},\mathfrak{g})$. Thus, for all $x,y,z\in\mathfrak{g}$ we obtain
\begin{eqnarray*}
	k(x\cdot y-y\cdot x,z) & = & k([x,y],z)\\
	& = &  \omega([x,y],j(z))\\
	& = & -\omega(j([x,y]),z)\\
	& = & -\omega(L_x(j(y))-L_y(j(x)),z)\\
	& = & -\omega(L_x(j(y)),z)+\omega(L_y(j(x)),z)\\
	& = & \omega(j(y),x\cdot z)-\omega(j(x),y\cdot z)\\
	& = & \omega(x,j(y\cdot z))-\omega(y,j(x\cdot z))\\
	& = & k(x,y\cdot z)-k(y,x\cdot z).
\end{eqnarray*}
So, the triple $(\mathfrak{g},\cdot,k)$ defines a pseudo-Hessian Lie algebra and hence the left invariant pseudo-Riemannian metric $k^+$ induced by $(\omega^+,J^+)$ is a Hessian pseudo-metric on $G$.

\begin{remark}
The Hessian pseudo-metric $k^+$ allows us to define a structure of pseudo-K\"ahler Lie group on the cotangent bundle of a special K\"ahler Lie group. This left invariant pseudo-K\"ahler structure will be described in the next section; see Lie bracket \eqref{Hess1}, symplectic form \eqref{Hess2}, and complex structure of item $\iota\iota\iota.$ The integrability of such a complex structure follows from the fact that $k$ is a left symmetric scalar product, that is, it verifies identity \eqref{HessianCondition}.
\end{remark}

\section{Cotangent bundle of a flat pseudo-Riemannian Lie group}
The aim of this section is to give a method for constructing left invariant special K\"ahler structures on the cotangent bundle of a simply connected flat pseudo-Riemannian Lie group. We will use the construction of left invariant flat affine symplectic structures introduced in \cite{V} (see also \cite{NB}) starting from a connected flat affine Lie group.  The collection of groups constructed here are examples of flat special K\"ahler manifolds. See for instance \cite{BC1} to know more about it. 

Let us consider the triple $(G,k^+,\nabla)$ where $(G,\nabla)$ is a flat affine Lie group ($\nabla$ is a left invariant flat affine connection) and $(G,k^+)$ is a pseudo-Riemannian Lie group ($k^+$ is a left invariant pseudo-Riemannian metric). Let $\mathfrak{g}$ be the Lie algebra of $G$ and recall that from previous section we are assuming that $G$ is simply connected.

With the data $(G,\nabla)$ it is possible to construct the following objects. Let $L^\ast:\mathfrak{g}\to \mathfrak{gl}(\mathfrak{g}^*)$ denote the dual linear representation associated $L:\mathfrak{g}\to \mathfrak{gl}(\mathfrak{g})$ where $L_x(y)=x\cdot y:=(\nabla_{x^+}y^+)(e)$ for all $x,y\in\mathfrak{g}$. Passing to exponential, we get a Lie group homomorphism $F:G \to \textnormal{GL}(\mathfrak{g}^*)$ determined by 
$$F(\exp_G(x))=\sum_{m=0}^{\infty}\dfrac{1}{m!}(L_x^\ast)^m.$$
The cotangent bundle $T^\ast G\approx G\times \mathfrak{g}^\ast$ may be endowed with a Lie group structure given by
$$(g,\alpha)\cdot(g',\beta)=(gg',F(g)(\beta)+\alpha),\qquad g,g'\in G\quad \textnormal{and}\quad\alpha,\beta\in\mathfrak{g}^\ast,$$
whose Lie algebra is the vector space $\mathfrak{g}\oplus \mathfrak{g}^\ast$ with Lie bracket
\begin{equation}\label{Hess1}
[x+\alpha,y+\beta]_\ast=[x,y]+L^*_x(\beta)-L^*_y(\alpha),\qquad x,y\in\mathfrak{g}\quad \textnormal{and}\quad\alpha,\beta\in\mathfrak{g}^\ast.
\end{equation}
\begin{enumerate}
\item[$\iota.$] In \cite{MR} the authors proved that 
\begin{equation}\label{Hess2}
\omega(x+\alpha,y+\beta)=\alpha(y)-\beta(x),
\end{equation}
defines a scalar 2-cocycle on $(\mathfrak{g}\oplus \mathfrak{g}^\ast, [\cdot,\cdot]_\ast)$.
\end{enumerate}
As $\mathfrak{g}$ and $\mathfrak{g}^\ast$ are Lagrangian Lie subalgebras of $(\mathfrak{g}\oplus \mathfrak{g}^\ast, [\cdot,\cdot]_\ast,\omega)$, by the Frobenius Theorem, they induce two left invariant Lagrangian transverse foliations on $T^\ast G$. Associated to such foliations, there exists a unique torsion free symplectic connection $\nabla^H$ which parallelizes both foliations. In the literature $\nabla^H$ is known as the Hess connection or the canonical connection of a bi-Lagrangian manifold (see \cite{H} for more details and the definition of the Hess connection).
\begin{enumerate}
\item[$\iota\iota.$] In \cite{V} the author showed that the Hess connection $\nabla^H$ associated to such a left invariant Lagrangian transverse foliations is also left invariant, flat, and it can be explicitly determined by
\begin{equation}\label{HessConnection}
\nabla^H_{(x+\alpha)^+}(y+\beta)^+=(xy+L_x^\ast(\beta))^+\qquad x,y\in\mathfrak{g}\quad \textnormal{and}\quad \alpha,\beta \in\mathfrak{g}^\ast
\end{equation}
where $xy=(\nabla_{x^+}y^+)(e)$.
\end{enumerate}
With the data $(G,k^+)$ we can define the following complex structure on $\mathfrak{g}\oplus \mathfrak{g}^\ast$. If we denote by $k:=k^+_e$ the scalar product on $\mathfrak{g}$, then we have that $k^\flat:\mathfrak{g}\to \mathfrak{g}$ defined by $k^\flat(x)=k(x,\cdot)$ is a linear isomorphism. 
\begin{enumerate}
\item[$\iota\iota\iota.$] Define the complex structure $j$ on $\mathfrak{g}\oplus \mathfrak{g}^\ast$ as
$$
j(x+\alpha)=z_\alpha-k^\flat(x)\qquad x\in\mathfrak{g}\quad \textnormal{and}\quad \alpha\in\mathfrak{g}^\ast
$$
where $z_\alpha$ is the unique element in $\mathfrak{g}$ such that $\alpha=k^\flat(z_\alpha)$. This is integrable with respect to the Lie bracket \eqref{Hess1} if and only if
\begin{equation}\label{Left2-cocycle}
k([x,y],z)=k(x,L_y(z))-k(y,L_x(z)),\qquad x,y,z\in\mathfrak{g}.
\end{equation}
For the definition of such a complex structure see for instance \cite{DM}.
\end{enumerate}
Let $\cdot^H$ denote the left symmetric product on $\mathfrak{g}\oplus \mathfrak{g}^\ast$ determined by $\nabla^H$ and $L^H$ its associated linear representation. Note that, on the one hand
$$j([x+\alpha,y+\beta]_\ast)=-j(k(z_\beta,L_x(\cdot)))+j(k(z_\alpha,L_y(\cdot)))-k^\flat([x,y]),$$
and on the other hand,
$$L^H_{(x+\alpha)}(j(y+\beta))-L^H_{(y+\beta)}(j(x+\alpha))= L_x(z_\beta)-L_y(z_\alpha)+k(y,L_x(\cdot))-k(x,L_y(\cdot)).$$
Thus, $j\in Z^1_{L^H}(\mathfrak{g}\oplus \mathfrak{g}^\ast,\mathfrak{g}\oplus \mathfrak{g}^\ast)$ if and only if $k$ satisfies identity \eqref{Left2-cocycle} and 
%
\begin{equation}\label{flatness}
k(z_\beta,L_x(\cdot))-k(z_\alpha,L_y(\cdot))=-k^\flat(L_x(z_\beta))+k^\flat(L_y(z_\alpha)),
\end{equation}
for all $x,y\in\mathfrak{g}$ and $\alpha,\beta\in\mathfrak{g}^\ast$.
\begin{lemma}
Identities \eqref{Left2-cocycle} and \eqref{flatness} hold true if and only if $\nabla$ is the Levi--Civita connection associated to $(G,k^+)$.
\end{lemma}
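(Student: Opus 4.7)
The plan is to reduce the statement to the well-known characterization of the Levi--Civita connection: since the product $\cdot$ already satisfies torsion-freeness $[x,y]=x\cdot y - y\cdot x$, the connection $\nabla$ coincides with the Levi--Civita connection of $k^+$ if and only if $\nabla k^+ = 0$, which at the level of $\mathfrak{g}$ is the metric-compatibility condition
\[
k(x\cdot y,z) + k(y,x\cdot z) = 0, \qquad x,y,z\in\mathfrak{g}. \tag{$\star$}
\]
So I would prove the lemma by showing that $(\star)$ is equivalent to \eqref{flatness}, and that $(\star)$ together with torsion-freeness implies \eqref{Left2-cocycle}.

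First I would unpack \eqref{flatness}. Both sides live in $\mathfrak{g}^\ast$, so I would evaluate at an arbitrary $w\in\mathfrak{g}$ to get
\[
k(z_\beta, x\cdot w) - k(z_\alpha, y\cdot w) = -k(x\cdot z_\beta, w) + k(y\cdot z_\alpha, w).
\]
Because $k^\flat\colon\mathfrak{g}\to\mathfrak{g}^\ast$ is a linear isomorphism, $z_\alpha$ and $z_\beta$ range independently over all of $\mathfrak{g}$ as $\alpha,\beta$ vary in $\mathfrak{g}^\ast$. Setting $\alpha=0$ (so $z_\alpha=0$) and writing $v:=z_\beta$, this reduces to $(\star)$, and conversely if $(\star)$ holds then the identity above is a direct consequence of applying $(\star)$ separately to the two pairs $(x,z_\beta,w)$ and $(y,z_\alpha,w)$. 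So \eqref{flatness} $\Longleftrightarrow$ $(\star)$.

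Next I would show that $(\star)$ (together with torsion-freeness, which we already have by definition of $\cdot$) implies \eqref{Left2-cocycle}. Using $[x,y]=x\cdot y - y\cdot x$ and applying $(\star)$ to each term one obtains
\[
k([x,y],z) = k(x\cdot y,z) - k(y\cdot x,z) = -k(y,x\cdot z) + k(x,y\cdot z),
\]
which is precisely \eqref{Left2-cocycle}. Putting the two steps together: if $\nabla$ is Levi--Civita then $(\star)$ holds, hence both \eqref{flatness} and \eqref{Left2-cocycle} hold; conversely, \eqref{flatness} alone forces $(\star)$, which combined with the already-assumed torsion-free condition gives that $\nabla=\nabla^{k^+}$.

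I do not expect a real obstacle here; the only thing to be careful about is bookkeeping the duality in \eqref{flatness}. In particular, note that \eqref{Left2-cocycle} turns out to be redundant given \eqref{flatness} and torsion-freeness, but the statement of the lemma is symmetric in both identities and this is consistent with the computation above.
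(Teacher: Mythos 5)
Your proposal is correct and follows essentially the same route as the paper: both arguments reduce the statement to the metric-compatibility condition $k(x\cdot y,z)+k(y,x\cdot z)=0$, observe that \eqref{Left2-cocycle} is automatic for a torsion-free metric connection, and extract compatibility from \eqref{flatness} by specializing one of the variables to kill a term (you set $\alpha=0$; the paper sets $y=0$). No gap.
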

\begin{proof}
Because the Levi--Civita connection associated to a left invariant pseudo-Riemannian metric trivially satisfies identity \eqref{Left2-cocycle}, it is enough to prove that identity	\eqref{flatness} holds true if and only if $\nabla$ is the Levi-Civita connection determined by $(G,k^+)$. If $\nabla$ is the Levi--Civita connection, then it follows that
$$k(L_x(y),z)+k(y,L_x(z))=0,\qquad x,y,z\in\mathfrak{g}.$$
It is easy to see that last identity implies \eqref{flatness}. Conversely, if identity \eqref{flatness} is true, then when $y=0$ and $z_\alpha=z_\beta$ we get that $k(z_\beta,L_x(\cdot))=-k(L_x(z_\beta),\cdot)$ for all $x\in \mathfrak{g}$ and $\beta\in \mathfrak{g}^\ast$. Thus, the fact that $k^\flat$ is a linear isomorphism implies that $\nabla$ must be the Levi-Civita connection associated to $(G,k^+)$.
\end{proof}
Summing up, we obtain the following result:
\begin{theorem}
The quadruple $(\mathfrak{g}\oplus \mathfrak{g}^\ast,\omega,j,\cdot^H)$ defines a special K\"ahler Lie algebra if and only if $(G,k^+)$ is a flat pseudo-Riemannian Lie group with Levi-Civita connection $\nabla$.
\end{theorem}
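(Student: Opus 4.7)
The plan is to verify each of the four axioms of Definition~\ref{definitionLie} on the quadruple $(\mathfrak{g}\oplus \mathfrak{g}^\ast,\omega,j,\cdot^H)$, separating what is automatic from what depends on the choice of $\nabla$. First, axioms $\iota\iota.$ and $\iota\iota\iota.$ come essentially for free from the construction recalled in the section: the Hess connection $\nabla^H$ described in~\eqref{HessConnection} is torsion free, so its associated product $\cdot^H$ satisfies $[u,v]_\ast = u\cdot^H v-v\cdot^H u$, and the statement that $\nabla^H$ is symplectic with respect to the $2$-cocycle $\omega$ of~\eqref{Hess2} is precisely the identity $\omega(u\cdot^H v,w)+\omega(v,u\cdot^H w)=0$. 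That $\omega$ is a scalar $2$-cocycle on $(\mathfrak{g}\oplus \mathfrak{g}^\ast,[\cdot,\cdot]_\ast)$ has been quoted from \cite{MR}.

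Next I would handle axiom $\iota.$ by splitting it into two pieces. The symmetry of $\widetilde{k}(u,v):=\omega(u,j(v))$ will follow from the invariance $\omega(j(u),j(v))=\omega(u,v)$, which I plan to verify by a short direct computation: writing $u=x+\alpha$, $v=y+\beta$ and using $\alpha=k^\flat(z_\alpha)$, $\beta=k^\flat(z_\beta)$ together with the symmetry of $k$, both $\omega(u,v)$ and $\omega(j(u),j(v))$ collapse to $k(y,z_\alpha)-k(x,z_\beta)$. Non-degeneracy of $\widetilde{k}$ then follows from that of $\omega$ together with the bijectivity of $j$. The integrability of $j$ relative to the bracket~\eqref{Hess1} has already been recorded in the section as equivalent to identity~\eqref{Left2-cocycle}.

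Finally, axiom $\iota\nu.$ requires $j\in Z^1_{L^H}(\mathfrak{g}\oplus \mathfrak{g}^\ast,\mathfrak{g}\oplus \mathfrak{g}^\ast)$. The paragraphs preceding the theorem have already reduced this condition, using the explicit form of $\nabla^H$ and of $j$, to the conjunction of~\eqref{Left2-cocycle} and~\eqref{flatness}. At that point the Lemma just proved finishes the job: it asserts that~\eqref{Left2-cocycle} and~\eqref{flatness} hold simultaneously if and only if $\nabla$ is the Levi--Civita connection of $k^+$. Combining the three steps yields both directions of the theorem, with the standing assumption that $\nabla$ is flat rephrasing the condition as the statement that $(G,k^+)$ is a flat pseudo-Riemannian Lie group whose Levi--Civita connection is $\nabla$. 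The only delicate point I anticipate is the bookkeeping of which axiom each identity corresponds to; once that is in place, no computation beyond the direct check of $\omega\circ j=\omega$ and the previously proved Lemma is needed.
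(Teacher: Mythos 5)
Your proposal is correct and follows essentially the same route as the paper: the paper's proof of this theorem is exactly the preceding construction (torsion-freeness, symplecticity, and the $2$-cocycle property quoted from the cited sources, integrability of $j$ reduced to \eqref{Left2-cocycle}, the $1$-cocycle condition reduced to \eqref{Left2-cocycle} and \eqref{flatness}) combined with the Lemma identifying those two identities with $\nabla$ being the Levi--Civita connection. Your added direct check that $\omega(j(\cdot),j(\cdot))=\omega(\cdot,\cdot)$ and that $\widetilde{k}$ is a scalar product is a small but welcome piece of bookkeeping that the paper only records implicitly in the remark following the theorem.
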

\begin{remark}
The pseudo-Riemannian metric $\widetilde{k}^+$ on $T^\ast G$ induced by the pair $(\omega,j)$ is determined by the scalar product
$$\widetilde{k}(x+\alpha,y+\beta)=k(x,y)+k(z_\alpha,z_\beta)\qquad x,y\in\mathfrak{g}\quad \textnormal{and}\quad \alpha,\beta \in\mathfrak{g}^\ast.$$
Therefore, if the signature of $k$ is $(p,q)$, then the signature of $\widetilde{k}$ is $(2p,2q)$. This may be easily verified by taking up an orthonormal basis of $(\mathfrak{g},k)$. In particular, if $(G,k^+)$ is a flat Riemannian Lie group, then $(T^\ast G,\omega,j,\nabla^H)$ is a special K\"ahler Lie group with $\widetilde{k}^+$ a left invariant Riemannian metric.
\end{remark}

\begin{corollary}
$\nabla^H j=0$.
\end{corollary}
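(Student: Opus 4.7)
The plan is to verify the infinitesimal reformulation of $\nabla^H j=0$. Since all objects are left invariant, this amounts to showing $L^H_X\circ j = j\circ L^H_X$ on $\mathfrak{g}\oplus \mathfrak{g}^\ast$ for every $X$. I would fix $X=x+\alpha$, $Y=y+\beta$, and unfold both compositions using the explicit formula \eqref{HessConnection} for $\cdot^H$ together with the definition of $j$ from item $\iota\iota\iota.$ Both resulting expressions turn out to be independent of $\alpha$ (consistent with the fact that $L^H_{x+\alpha}$ in \eqref{HessConnection} is insensitive to the $\mathfrak{g}^\ast$-component of $X$), and after separating $\mathfrak{g}$- and $\mathfrak{g}^\ast$-components the desired equality collapses to a single algebraic identity on $\mathfrak{g}$:
\begin{equation*}
k^\flat(x\cdot w) \;=\; L^*_x\bigl(k^\flat(w)\bigr), \qquad x,w\in\mathfrak{g}.
\end{equation*}

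Evaluating on an arbitrary $v\in\mathfrak{g}$ and using the convention $(L^*_x\mu)(v)=-\mu(L_x(v))$ for the dual representation, this identity is equivalent to
\begin{equation*}
k(x\cdot w,v) + k(w,x\cdot v) \;=\; 0, \qquad x,v,w\in\mathfrak{g},
\end{equation*}
which asserts the skew-adjointness of every $L_x$ with respect to the scalar product $k$. This is exactly the infinitesimal metric-compatibility of $\nabla$ with $k^+$. By the preceding Theorem, the hypothesis that $(\mathfrak{g}\oplus\mathfrak{g}^\ast,\omega,j,\cdot^H)$ is a special K\"ahler Lie algebra already forces $\nabla$ to be the Levi--Civita connection of the flat pseudo-Riemannian Lie group $(G,k^+)$, so the skew-adjointness holds automatically and the proof is complete.

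The main (and very minor) obstacle is purely bookkeeping: keeping track of the sign conventions in the dual representation $L^\ast$ and in the musical isomorphism $k^\flat$, and checking that both the $\mathfrak{g}$- and $\mathfrak{g}^\ast$-component equations actually reduce to the same identity. Once the conventions are pinned down, no ingredient beyond the Levi--Civita characterization recorded in the preceding Lemma and Theorem is required, and the argument amounts to a two-line unfolding of the definitions of $L^H$ and $j$.
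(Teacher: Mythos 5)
Your proposal is correct and is precisely the ``straightforward computation using the fact that $\nabla$ is the Levi--Civita connection'' that the paper's one-line proof alludes to: unfolding $[L^H_{x+\alpha},j]=0$ via \eqref{HessConnection} and the definition of $j$, both components reduce to $k^\flat(x\cdot w)=L^\ast_x(k^\flat(w))$, i.e.\ to the skew-adjointness $k(L_x(y),z)+k(y,L_x(z))=0$ already recorded in the preceding Lemma. No discrepancy with the paper's argument.
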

\begin{proof}
This is a straightforward computation using the fact that $\nabla$ is the Levi--Civita connection associated to $k^+$.
\end{proof}

\begin{corollary}
The Hess connection $\nabla^H$ is geodesically complete if and only if $G$ is unimodular.
\end{corollary}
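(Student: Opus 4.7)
The plan is to reduce the completeness statement to a trace condition via Helmstetter's classical criterion: for a simply connected Lie group $H$ equipped with a left invariant flat affine connection coming from a left symmetric product $\cdot$ on $\mathfrak{h}$, the connection is geodesically complete if and only if $\operatorname{tr}(R_u)=0$ for every $u\in\mathfrak{h}$, where $R_u(v)=v\cdot u$. I will apply this with $H=T^\ast G$, $\mathfrak{h}=\mathfrak{g}\oplus\mathfrak{g}^\ast$, and the product $\cdot^H$ read off from \eqref{HessConnection}. Note that $T^\ast G\approx G\times\mathfrak{g}^\ast$ is simply connected whenever $G$ is, so the hypotheses of the criterion are met automatically.

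The first step is a direct computation of $R^H_{x+\alpha}$ from \eqref{HessConnection}. Unpacking gives
\[
R^H_{x+\alpha}(y+\beta)=(y+\beta)\cdot^H(x+\alpha)=yx+L_y^\ast(\alpha),
\]
so $\mathfrak{g}^\ast$ lies in the kernel of $R^H_{x+\alpha}$ while $y\in\mathfrak{g}$ is sent to $yx+L_y^\ast(\alpha)\in\mathfrak{g}\oplus\mathfrak{g}^\ast$. Thus, with respect to the direct sum decomposition, $R^H_{x+\alpha}$ is block lower triangular with diagonal blocks $R_x$ on $\mathfrak{g}$ and $0$ on $\mathfrak{g}^\ast$, and consequently
\[
\operatorname{tr}\bigl(R^H_{x+\alpha}\bigr)=\operatorname{tr}(R_x)\qquad\text{for every }(x,\alpha)\in\mathfrak{g}\oplus\mathfrak{g}^\ast.
\]

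The second step uses the previous lemma, which forces $\nabla$ to be the Levi--Civita connection of $(G,k^+)$. Being a metric connection, each $L_x\colon\mathfrak{g}\to\mathfrak{g}$ is skew-adjoint with respect to the scalar product $k$, and skew-adjoint endomorphisms of a non-degenerate inner product space have vanishing trace; hence $\operatorname{tr}(L_x)=0$ for all $x\in\mathfrak{g}$. Combining this with the identity $\operatorname{ad}_x=L_x-R_x$ yields
\[
\operatorname{tr}(R_x)=-\operatorname{tr}(\operatorname{ad}_x).
\]
Therefore $\operatorname{tr}\bigl(R^H_{x+\alpha}\bigr)=0$ for all $(x,\alpha)$ if and only if $\operatorname{tr}(\operatorname{ad}_x)=0$ for all $x\in\mathfrak{g}$, which is precisely unimodularity of $\mathfrak{g}$, equivalently of the simply connected group $G$. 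Invoking Helmstetter's criterion on $T^\ast G$ closes the argument.

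The only delicate point, more of a bookkeeping matter than a genuine obstacle, is to make sure the completeness criterion is applied in its correct form, namely with right multiplications $R_x$ (and not $L_x$); the computation of $R^H_{x+\alpha}$ and the use of the Levi--Civita condition are then essentially forced.
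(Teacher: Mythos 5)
Your argument is correct, but it follows a different route from the paper. The paper disposes of this corollary by quoting two external results: from \cite{V}, that $\nabla^H$ is geodesically complete if and only if the underlying connection $\nabla$ on $G$ is, and from \cite{AM}, that the Levi--Civita connection of a left invariant flat pseudo-Riemannian metric is complete if and only if $G$ is unimodular. You instead work directly on $T^\ast G$ with the Helmstetter--Segal trace criterion, computing from \eqref{HessConnection} that $R^H_{x+\alpha}$ kills $\mathfrak{g}^\ast$ and is block lower triangular with diagonal blocks $R_x$ and $0$, so $\operatorname{tr}(R^H_{x+\alpha})=\operatorname{tr}(R_x)=\operatorname{tr}(L_x)-\operatorname{tr}(\operatorname{ad}_x)=-\operatorname{tr}(\operatorname{ad}_x)$, the last equality because $L_x$ is $k$-skew (this is where the Levi--Civita hypothesis, guaranteed by the preceding lemma and theorem, enters). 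Your version is essentially self-contained modulo one standard criterion --- the same one the paper itself invokes in Section 5 when it asserts that a left invariant flat affine symplectic connection is complete if and only if the group is unimodular --- and it makes visible exactly why unimodularity of $G$ (rather than of $T^\ast G$, which turns out to be equivalent here) is the right condition; the paper's version is shorter but outsources both halves of the equivalence. The only point to be careful about is citing the completeness criterion in the form you actually use: the most frequently quoted statement is that completeness is equivalent to nilpotency of all right multiplications, and the equivalence with the condition $\operatorname{tr}(R_u)=0$ for all $u$ is a (true, but separate) theorem of Helmstetter, so a precise reference there would tighten the write-up.
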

\begin{proof}
In \cite{V} was proved that $\nabla^H$ is geodesically complete if and only if $\nabla$ is geodesically complete. On the other hand, in \cite{AM} the authors showed that the Levi--Civita connection associated to a left invariant flat pseudo-Riemannian metric is geodesically complete if and only if $G$ is unimodular. So, the result follows.
\end{proof}
Using the previous simple construction and the constructions of flat pseudo-Riemannian Lie groups introduced in \cite{Au,AM} it is possible to get several interesting examples of flat special K\"ahler Lie groups. For instance:
\begin{example}
On the one hand, the special K\"ahler Lie group $G_1$ of Example \ref{ExampleD4} can be obtained from the 2-dimension flat Lorentzian Lie group $(\textnormal{Aff}(\mathbb{R})_0,k_1)$ where $k_1=\dfrac{1}{x^2}(dx\otimes dy+dy\otimes dx)$. Given that $\textnormal{Aff}(\mathbb{R})_0$ is not unimodular, the Hess connection $\nabla^H$ on $G_1$ is not geodesically complete.

On the other hand, the special K\"ahler Lie group $G_2$ of Example \ref{ExampleD6} can be obtained from the 3-dimension flat Lorentzian Lie group $(H_3,k_2)$ where $H_3$ is the 3-dimensional Heisenberg Lie group and $k_2=dx\otimes dz+dy\otimes dy+dz\otimes dx-x(dx\otimes dy+ dy\otimes dx)$. Given that $H_3$ is unimodular, the Hess connection $\nabla^H$ on $G_2$ is geodesically complete. The $(2n+1)$-dimensional Heisenberg group $H_{2n+1}$ does not admit a structure of flat pseudo-Riemannian Lie group for all $n\geq 2$; see \cite{AM}.
\end{example}

\section{Twisted cartesian products}

The twisted product of Lie groups (resp. Lie algebras) defined by means of linear representations is a well known construction which may be viewed as the generalization of the semidirect product of groups (resp. algebras); see for instance \cite{R}. In this section we explain a way for constructing a structure of special K\"ahler Lie algebra on the cartesian product of two special K\"ahler Lie algebras which is twisted by two linear representations by infinitesimal K\"ahler transformations of such Lie algebras. First steps of the construction that we will introduce below are motivated by a construction of flat pseudo-Riemannian metrics made in \cite{Au}. It is worth mentioning here that our twisted cartesian product can be obtained by a ``double reduction'' process that we shall explain at the end of the section. As we will see later, by using this construction we can obtain examples of non-trivial left invariant special K\"ahler structures in every even dimension $\geq 4$.

Recall that the group of linear K\"ahler transformations of K\"ahler vector space $(V,\omega,J)$ is defined as the Lie group $\textnormal{KL}(V,\omega,J):=\textnormal{Sp}(V,\omega)\cap \textnormal{GL}(V,J)$ whose Lie algebra is given by $\mathfrak{kl}(V,\omega,J)=\mathfrak{sp}(V,\omega)\cap \mathfrak{gl}(V,J)$.
\begin{Assumption}
	Let $(\mathfrak{g}_1,\omega_1,j_1,\cdot_1)$ and $(\mathfrak{g}_2,\omega_2,j_2,\cdot_2)$ be two special K\"ahler Lie algebras for which there exist two linear Lie algebra representations
	$$\theta:\mathfrak{g}_1\to \mathfrak{kl}(\mathfrak{g}_2,\omega_2,j_2)\qquad\textnormal{and}\qquad\rho:\mathfrak{g}_2\to \mathfrak{kl}(\mathfrak{g}_1,\omega_1,j_1).$$
\end{Assumption}
Our goal here is to define a structure of special K\"ahler Lie algebra on $\mathfrak{g}_1\oplus \mathfrak{g}_2$ using these two representations $(\theta,\rho)$ so that both $\mathfrak{g}_1$ and $\mathfrak{g}_2$ become special K\"ahler Lie subalgebras of it.
\begin{enumerate}
	\item[$\iota.$] Let us begin by defining our candidate of left symmetric product on $\mathfrak{g}_1\oplus \mathfrak{g}_2$. Let $L:\mathfrak{g}_1\to\mathfrak{gl}(\mathfrak{g}_1)$ and $L':\mathfrak{g}_2\to\mathfrak{gl}(\mathfrak{g}_2)$ denote the linear representations induced by the left symmetric products $\cdot_1$ and $\cdot_2$, respectively. We will use the linear transformations $(\theta,\rho)$ for defining the action of $\mathfrak{g}_1$ over $\mathfrak{g}_2$, and vice-versa, as $\theta(x_1)(x_2)=x_1\cdot x_2$ and $\rho(x_2)(x_1)=x_2\cdot x_1$ for all $x_1\in\mathfrak{g}_1$ and $x_2\in\mathfrak{g}_2$. Accordingly, we define the product $\cdot$ on $\mathfrak{g}_1\oplus \mathfrak{g}_2$ as
	\begin{equation}\label{TwistedProduct}
	(x_1+x_2)\cdot(y_1+y_2)=L_{x_1}(y_1)+\rho(x_2)(y_1)+\theta(x_1)(y_2)+L'_{x_2}(y_2).
	\end{equation}
	Let us now deduce the conditions needed for $\cdot$ to be a left symmetric product. We just need to look at the general expression for the associator $(x_1+x_2,y_1+y_2,z_1+z_2)$ of $\cdot$ in $\mathfrak{g}_1\oplus \mathfrak{g}_2$. Indeed, 
	\begin{eqnarray*}
		& & (x_1+x_2,y_1+y_2,z_1+z_2)\\
		& = & (x_1+x_2)\cdot((y_1+y_2)\cdot(z_1+z_2))-((x_1+x_2)\cdot(y_1+y_2))\cdot(z_1+z_2)\\
		& = & x_1\cdot_1 (y_1\cdot_1 z_1)-(x_1\cdot_1 y_1)\cdot_1 z_1+x_1\cdot_1\rho(y_2)(z_1)+\rho(x_2)(y_1\cdot_1 z_1)\\
		& + & \rho(x_2)(\rho(y_2)(z_1))-\rho(\theta(x_1)(y_2))(z_1)-\rho(x_2)(y_1)\cdot_1 z_1-\rho(x_2\cdot_2 y_2)(z_1)\\
		& + & x_2\cdot_2 (y_2\cdot_2 z_2)-(x_2\cdot_2 y_2)\cdot_2 z_2+\theta(x_1)(\theta(y_1)(z_2))+\theta(x_1)(y_2\cdot_2 z_2)\\
		& + & x_2\cdot_2 \theta(y_1)(z_2)-\theta(x_1\cdot_1 y_1)(z_2)-\theta(x_1)(y_2)\cdot_2 z_2-\theta(\rho(x_2)(y_1))(z_2).
	\end{eqnarray*}
	Recall that both $\rho$ and $\theta$ are Lie algebra representations. Moreover, $(x_1,y_1,z_1)=(y_1,x_1,z_1)$ and $(x_2,y_2,z_2)=(y_2,x_2,z_2)$ are verified in $\mathfrak{g}_1$ and $\mathfrak{g}_2$, respectively since $\cdot_1$ and $\cdot_2$ are left symmetric products. Therefore, the identity $(x_1+x_2,y_1+y_2,z_1+z_2)=(y_1+y_2,x_1+x_2,z_1+z_2)$ holds in $\mathfrak{g}_1\oplus \mathfrak{g}_2$ if and only if
	\begin{equation}\label{Twisted1}
	L_{x_1}\circ\rho(x_2)-\rho(x_2)\circ L_{x_1}=\rho(\theta(x_1)(x_2))-L_{\rho(x_2)(x_1)},\qquad\textnormal{and}
	\end{equation}
	\begin{equation}\label{Twisted2}
	L'_{x_2}\circ\theta(x_1)-\theta(x_1)\circ L'_{x_2}=\theta(\rho(x_2)(x_1))-L'_{\theta(x_1)(x_2)}
	\end{equation}
	for all $x_1\in\mathfrak{g}_1$ and $x_2\in\mathfrak{g}_2$.
	
	\item[$\iota\iota.$] The Lie bracket on $\mathfrak{g}_1\oplus \mathfrak{g}_2$ is defined as the commutator of the left symmetric product \eqref{TwistedProduct}. Namely,
	\begin{eqnarray*}
		&  & [x_1+x_2,y_1+y_2]_\dagger\\	
		& = & (x_1+x_2)\cdot(y_1+y_2)-(y_1+y_2)\cdot(x_1+x_2)\\
		& = & [x_1,y_1]_{\mathfrak{g}_1}+\rho(x_2)(y_1)-\rho(y_2)(x_1)+\theta(x_1)(y_2)-\theta(y_1)(x_2)+[x_2,y_2]_{\mathfrak{g}_2}.
	\end{eqnarray*}
As the commutator of every left symmetric product always allows us to define a structure of Lie algebra, we have that $(\mathfrak{g}_1\oplus \mathfrak{g}_2,[\cdot,\cdot]_\dagger)$ is indeed a Lie algebra. After assuming identities \eqref{Twisted1} and \eqref{Twisted2} we see that the Lie algebra structure on $\mathfrak{g}_1\oplus \mathfrak{g}_2$ may be defined through the representations $(\theta,\rho)$ by setting $[x_1,x_2]_\dagger:=\theta(x_1)(x_2)-\rho(x_2)(x_1)$ for all $x_1\in\mathfrak{g}_1$ and $x_2\in\mathfrak{g}_2$.
	\item[$\iota\iota\iota.$] The symplectic structure on  $\mathfrak{g}_1\oplus \mathfrak{g}_2$ is defined as
	$$\omega(x_1+x_2,y_1+y_2)=\omega_1(x_1,y_1)+\omega_2(x_2,y_2),$$
	for all $x_1,y_1\in\mathfrak{g}_1$ and $x_2,y_2\in\mathfrak{g}_2$. On the one hand, it is clear that $\omega$ is skew-symmetric and non-degenerate. Moreover, it is simple to check that
	\begin{eqnarray*}
		& & \omega([x_1+x_2,y_1+y_2]_\dagger,z_1+z_2)\\	
		& = & \omega_1([x_1,y_1],z_1)+\omega_1(\rho(x_2)(y_1),z_1)-\omega_1(\rho(y_2)(x_1),z_1)\\
		& + & \omega_2([x_2,y_2],z_2)+\omega_2(\theta(x_1)(y_2),z_2)-\omega_2(\theta(y_1)(x_2),z_2).
	\end{eqnarray*}
	As we have that both $\omega_1$ and $\omega_2$ are  scalar 2-cocycles, $\rho(a_2)\in \mathfrak{sp}(\mathfrak{g}_1,\omega_1)$, and $\theta(a_1)\in \mathfrak{sp}(\mathfrak{g}_2,\omega_2)$ for all $a_1\in\mathfrak{g}_1$ and $a_2\in\mathfrak{g}_2$ it follows that
	$$\oint \omega([x_1+x_2,y_1+y_2]_\dagger,z_1+z_2)=0.$$
	That is, $\omega$ is a scalar 2-cocycle on $\mathfrak{g}_1\oplus \mathfrak{g}_2$. On the other hand, the fact that $\rho(a_2),L_{a_1}\in \mathfrak{sp}(\mathfrak{g}_1,\omega_1)$ and $\theta(a_1),L'_{a_2}\in \mathfrak{sp}(\mathfrak{g}_2,\omega_2)$ for all $a_1\in\mathfrak{g}_1$ and $a_2\in\mathfrak{g}_2$ implies that
	$$\omega((x_1+x_2)\cdot(y_1+y_2),z_1+z_2)+\omega(y_1+y_2,(x_1+x_2)\cdot(z_1+z_2))=0.$$ 
	In other words, $\cdot$ is symplectic with respect to $\omega$.

	\item[$\iota\nu.$] Finally, the complex structure $j$ on $\mathfrak{g}_1\oplus \mathfrak{g}_2$ is defined as
	$$j(x_1+x_2)=j_1(x_1)+j_2(x_2),$$
	for all $x_1\in\mathfrak{g}_1$ and $x_2\in\mathfrak{g}_2$. Clearly, $j^2=-1$. We have to check the requirements that we need to ensure that $j$ is an integrable complex structure and it belongs to $Z_{\widetilde{L}}^1(\mathfrak{g}_1\oplus \mathfrak{g}_2,\mathfrak{g}_1\oplus \mathfrak{g}_2)$ where $\widetilde{L}$ is the linear representation induced by the left symmetric product $\cdot$ defined in \eqref{TwistedProduct}. Let us first take a look at the integrability conditions.
	
	On the one side,
	\begin{eqnarray*}
		& & [j(x_1+x_2),j(y_1+y_2)]_\dagger-[x_1+x_2,y_1+y_2]_\dagger \\
		& = & [j_1(x_1),j_1(y_1)]-[x_1,y_1]-\rho(j_2(y_2))(j_1(x_1))+\rho(j_2(x_2))(j_1(y_1))\\
		& - & \rho(x_2)(y_1)+\rho(y_2)(x_1)+[j_2(x_2),j_2(y_2)]-[x_2,y_2]\\
		& + &\theta(j_1(x_1))(j_2(y_2))-\theta(j_1(y_1))(j_2(x_2))+\theta(y_1)(x_2)-\theta(x_1)(y_2).
	\end{eqnarray*}
	On the other side, 
	\begin{eqnarray*}
		& & j([j(x_1+x_2),y_1+y_2]_\dagger+[x_1+x_2,j(y_1+y_2)]_\dagger)\\
		& = & j_1([j_1(x_1),y_1]+[x_1,j_1(y_1)])-j_1(\rho(y_2)(j_1(x_1)))+j_1(\rho(j_2(x_2))(y_1))\\
		& - & j_1(\rho(j_2(y_2))(x_1))+j_1(\rho(x_2)(j_1(y_1)))+j_2([j_2(x_2),y_2]+[x_2,j_2(y_2)])\\
		& + & j_2(\theta(j_1(x_1))(y_2))-j_2(\theta(y_1)(j_2(x_2)))+j_2(\theta(x_1)(j_2(y_2)))-j_2(\theta(j_1(y_1))(x_2)).
	\end{eqnarray*}
	
	Recall that $j_1$ and $j_2$ are integrable complex structures on $\mathfrak{g}_1$ and $\mathfrak{g}_2$, respectively. Therefore, $j$ is an integrable complex structure on $\mathfrak{g}_1\oplus \mathfrak{g}_2$ if and only if
	$$j_1\circ\rho(x_2)\circ j_1+\rho(x_2)=[\rho(j_2(x_2)),j_1],\qquad \textnormal{and}$$
	$$j_2\circ\theta(x_1)\circ j_2+\theta(x_1)=[\theta(j_1(x_1)),j_2]$$
	for all $x_1\in\mathfrak{g}_1$ and $x_2\in\mathfrak{g}_2$. But we must recall that $\rho(x_2)\in \mathfrak{gl}(\mathfrak{g}_1,j_1)$ which means that $\rho(x_2)$ and $j_1$ commute. Thus, $j_1\circ\rho(x_2)\circ j_1+\rho(x_2)=0=[\rho(j_2(x_2)),j_1]$ for all $x_2\in\mathfrak{g}_2$. The same is true for the second identity that we got above because $\theta(x_1)\in \mathfrak{gl}(\mathfrak{g}_2,j_2)$. As consequence, under the assumptions that we have, the complex structure $j$ is always integrable.
	
	Let us now see what happens with the 1-cocycle condition. Note that on the one hand
	\begin{eqnarray*}
		j([x_1+x_2,y_1+x_2]_\dagger) & = & j_1([x_1,y_1])-j_1(\rho(y_2)(x_1))+j_1(\rho(x_2)(y_1))\\
		& + & j_2([x_2,y_2])+j_2(\theta(x_1)(y_2))-j_2(\theta(y_1)(x_2)).
	\end{eqnarray*}
	On the other hand,
	\begin{eqnarray*}
		& & (x_1+x_2)\cdot j(y_1+y_2)-(y_1+y_2)\cdot j(x_1+x_2)\\
		& = & x_1\cdot_1 j_1(y_1)-y_1\cdot_1 j_1(x_1)+\rho(x_2)(j_1(y_1))-\rho(y_2)(j_1(x_1))\\
		& + & x_2\cdot_2 j_2(y_2)-y_2\cdot_2 j_2(x_2)+\theta(x_1)(j_2(y_2))-\theta(y_1)(j_2(x_2)). 
	\end{eqnarray*}
	Recall that $j_1\in Z_{L}^1(\mathfrak{g}_1,\mathfrak{g}_1)$ and $j_2\in Z_{L'}^1(\mathfrak{g}_2,\mathfrak{g}_2)$. Therefore, $j\in Z_{\widetilde{L}}^1(\mathfrak{g}_1\oplus \mathfrak{g}_2,\mathfrak{g}_1\oplus \mathfrak{g}_2)$ if and only if
	$$j_1\circ \rho(x_2)=\rho(x_2)\circ j_1\qquad\textnormal{and}\qquad j_2\circ \theta(x_1)=\theta(x_1)\circ j_2,$$
	for all $x_1\in\mathfrak{g}_1$ and $x_2\in\mathfrak{g}_2$ which always holds true since $\rho(x_2)\in \mathfrak{gl}(\mathfrak{g}_1,j_1)$ and $\theta(x_1)\in \mathfrak{gl}(\mathfrak{g}_1,j_1)$.
	\item[$\nu.$] Let $k_1$ and $k_2$ be the scalar products induced by $(\omega_1,j_1)$ and $(\omega_2,j_2)$, respectively. Then the scalar product $k$ induced by $(\omega,j)$ on $\mathfrak{g}_1\oplus \mathfrak{g}_2$ is given by
	$$k(x_1+x_2,y_1+y_2)=k_1(x_1,y_1)+k_2(x_2,y_2).$$
\end{enumerate}
Summing up:
\begin{theorem}\label{TwistedAlgebra}
	Let $(\mathfrak{g}_1,\omega_1,j_1,\cdot_1)$ and $(\mathfrak{g}_2,\omega_2,j_2,\cdot_2)$ be two special K\"ahler Lie algebras for which there exist two linear representations $\theta:\mathfrak{g}_1\to \mathfrak{kl}(\mathfrak{g}_2,\omega_2,j_2)$ and $\rho:\mathfrak{g}_2\to \mathfrak{kl}(\mathfrak{g}_1,\omega_1,j_1)$ that verify the identities \eqref{Twisted1} and \eqref{Twisted2}. Then the vector space $\mathfrak{g}_1\oplus \mathfrak{g}_2$ equipped with
	\begin{enumerate}
		\item[$\iota.$] the Lie bracket $[\cdot,\cdot]_\dagger$:
		$$[x_1+x_2,y_1+y_2]_\dagger=[x_1,y_1]_{\mathfrak{g}_1}+\rho(x_2)(y_1)-\rho(y_2)(x_1)+\theta(x_1)(y_2)-\theta(y_1)(x_2)+[x_2,y_2]_{\mathfrak{g}_2},$$
		\item[$\iota\iota.$] the non-degenerate scalar 2-cocycle $\omega$:
		$$\omega(x_1+x_2,y_1+y_2)=\omega_1(x_1,y_1)+\omega_2(x_2,y_2),$$
		\item[$\iota\iota\iota.$] the left symmetric product $\cdot$:
		$$(x_1+x_2)\cdot(y_1+y_2)=L_{x_1}(y_1)+\rho(x_2)(y_1)+\theta(x_1)(y_2)+L'_{x_2}(y_2),$$
		\item[$\iota\nu.$] and the integrable complex structure $j$:
		$$j(x_1+x_2)=j_1(x_1)+j_2(x_2),$$
		defines another special K\"ahler Lie algebra.
	\end{enumerate}
\end{theorem}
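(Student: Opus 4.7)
The plan is to verify, in turn, each of the four axioms that define a special Kähler Lie algebra (Definition \ref{definitionLie}), noting that essentially all the necessary calculations are already laid out in items $\iota$--$\nu$ preceding the theorem statement, so the proof is largely an organized recollection of what was shown there. First I would verify that $[\cdot,\cdot]_\dagger$ in $\iota$ is genuinely a Lie bracket and that $[x+y,z+w]_\dagger = (x+y)\cdot(z+w) - (z+w)\cdot(x+y)$; this follows because the commutator of any left symmetric product yields a Lie bracket, so the main issue is establishing the left symmetry of $\cdot$.

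Next I would check condition $\iota\iota$ of the definition (axiom $\iota\iota$ of Definition \ref{definitionLie}), which by construction is automatic from the way $\cdot$ is set up and the definition of $[\cdot,\cdot]_\dagger$ as its commutator. The heart of the matter — what I expect to be the main obstacle — is the left symmetry of the product $\cdot$ defined in \eqref{TwistedProduct}. To handle this, I would expand the associator $(x_1+x_2, y_1+y_2, z_1+z_2)$ as was done above, use that $\cdot_1$ and $\cdot_2$ are themselves left symmetric, and that $\theta, \rho$ are Lie algebra homomorphisms, to reduce the requirement $(u,v,w) = (v,u,w)$ to precisely the two hypotheses \eqref{Twisted1} and \eqref{Twisted2}. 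Conversely these identities guarantee the desired symmetry, so the verification becomes an equivalence.

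Then I would verify axiom $\iota\iota\iota$ of Definition \ref{definitionLie} (the symplectic condition on $\cdot$) and the 2-cocycle property of $\omega$, both of which are immediate from item $\iota\iota\iota$: since $\rho(x_2), L_{x_1} \in \mathfrak{sp}(\mathfrak{g}_1,\omega_1)$ and $\theta(x_1), L'_{x_2} \in \mathfrak{sp}(\mathfrak{g}_2,\omega_2)$, splitting $\omega$ and $\cdot$ across the two summands reduces everything to the corresponding statements in each $\mathfrak{g}_i$. The cyclic sum $\oint \omega([\,\cdot\,,\,\cdot\,]_\dagger,\,\cdot\,)$ splits analogously, using that each $\omega_i$ is a 2-cocycle in $\mathfrak{g}_i$ and that $\rho, \theta$ land in the symplectic subalgebras.

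Finally I would check axiom $\iota$ of Definition \ref{definitionLie} by verifying that $j$ is integrable and satisfies the 1-cocycle condition $j \in Z^1_{\widetilde L}(\mathfrak{g}_1 \oplus \mathfrak{g}_2, \mathfrak{g}_1 \oplus \mathfrak{g}_2)$; both are done in item $\iota\nu$ and both follow automatically from the hypothesis that $\rho(x_2) \in \mathfrak{gl}(\mathfrak{g}_1, j_1)$ commutes with $j_1$ and $\theta(x_1) \in \mathfrak{gl}(\mathfrak{g}_2, j_2)$ commutes with $j_2$, combined with $j_1 \in Z^1_L(\mathfrak{g}_1,\mathfrak{g}_1)$ and $j_2 \in Z^1_{L'}(\mathfrak{g}_2,\mathfrak{g}_2)$. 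The Kähler compatibility $\omega(j\cdot,j\cdot)=\omega(\cdot,\cdot)$ required for the pseudo-Kähler structure decomposes across the direct sum and follows from the analogous compatibility on each factor. Collecting these verifications gives that $(\mathfrak{g}_1 \oplus \mathfrak{g}_2, \omega, j, \cdot)$ satisfies all four axioms, completing the proof.
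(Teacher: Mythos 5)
Your proposal is correct and follows essentially the same route as the paper: the paper's ``proof'' is precisely the sequence of verifications in items $\iota.$--$\nu.$ preceding the theorem statement (left symmetry of the product reducing to \eqref{Twisted1}--\eqref{Twisted2}, the 2-cocycle and symplectic conditions from $\rho,\theta$ landing in the symplectic algebras, integrability and the 1-cocycle property of $j$ from the commutation with $j_1,j_2$, and the scalar product splitting as $k_1+k_2$), and the theorem is stated as a summary of those computations. Your organized recollection of the same steps, including the correct identification of the associator computation as the one place where the hypotheses \eqref{Twisted1} and \eqref{Twisted2} are actually needed, matches the paper's argument.
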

Motivated for the previous result we set up the following definition:
\begin{definition}
	The special K\"ahler Lie algebra from Theorem \ref{TwistedAlgebra} is called \emph{twisted cartesian product} of $\mathfrak{g}_1$ and $\mathfrak{g}_2$ according to the representations $(\theta,\rho)$.
\end{definition}
Next remarks come in order:
\begin{remark}
	\begin{enumerate}
		\item[$\iota.$] The name twisted cartesian product comes from the fact that if both $\theta$ and $\rho$ are the zero representations, then the special K\"ahler Lie algebra that we get from Theorem \ref{TwistedAlgebra} is the trivial one which is defined through the cartesian product.
		\item[$\iota\iota.$] If $\theta=0$, then the twisted cartesian product becomes in the special K\"ahler Lie algebra obtained as the semi-direct product of $\mathfrak{g}_2$ with $\mathfrak{g}_1$ by means of $\rho$.
		\item[$\iota\iota\iota.$] It is simple to see that both $\mathfrak{g}_1$ and $\mathfrak{g}_2$ are special K\"ahler Lie subalgebras of the twisted cartesian product.
		\item[$\iota\nu.$] If the signature of the scalars product $k_1$ and $k_2$ are $(p_1,q_1)$ and $(p_2,q_2)$, respectively, then the signature of $k$ is $(p_1+p_2,q_1+q_2)$.
	\end{enumerate}
\end{remark}

\begin{corollary}
	Let $(G,\omega,J,\nabla)$ be a special K\"ahler Lie group whose Lie algebra is obtained as the twisted cartesian product of the Lie algebras of two special K\"ahler Lie groups $(G_1,\omega_1,J_1,\nabla_1)$ and $(G_2,\omega_2,J_2,\nabla_2)$ according to the representations $(\theta,\rho)$. Then $\nabla J=0$ if and only if both $\nabla_1 J_1=0$ and $\nabla_2 J_2=0$.
\end{corollary}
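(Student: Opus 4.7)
The plan is to pass to the special Kähler Lie algebra level and use the standard translation: $\nabla J = 0$ on $G$ is equivalent to $\widetilde{L}_x \circ j = j \circ \widetilde{L}_x$ for every $x$ in the total Lie algebra $\mathfrak{g}_1 \oplus \mathfrak{g}_2$, where $\widetilde{L}$ denotes the representation associated with the left symmetric product $\cdot$ of the twisted cartesian product. Likewise, $\nabla_i J_i = 0$ corresponds to $L \circ j_1 = j_1 \circ L$ on $\mathfrak{g}_1$ and $L' \circ j_2 = j_2 \circ L'$ on $\mathfrak{g}_2$. So the whole statement reduces to an algebraic identity that I can check directly from the formulas in Theorem \ref{TwistedAlgebra}.

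The key computation is to evaluate both sides of $\widetilde{L}_{x_1+x_2}\bigl(j(y_1+y_2)\bigr) = j\bigl(\widetilde{L}_{x_1+x_2}(y_1+y_2)\bigr)$ using the explicit formula $(x_1+x_2)\cdot(y_1+y_2)=L_{x_1}(y_1)+\rho(x_2)(y_1)+\theta(x_1)(y_2)+L'_{x_2}(y_2)$ together with $j(z_1+z_2)=j_1(z_1)+j_2(z_2)$. The crucial observation is that since $\rho$ takes values in $\mathfrak{kl}(\mathfrak{g}_1,\omega_1,j_1)$ and $\theta$ in $\mathfrak{kl}(\mathfrak{g}_2,\omega_2,j_2)$, one automatically has $\rho(x_2)\circ j_1 = j_1\circ\rho(x_2)$ and $\theta(x_1)\circ j_2 = j_2\circ\theta(x_1)$. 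As a result, the mixed terms cancel on both sides, and the difference $\widetilde{L}_{x_1+x_2}\circ j - j\circ \widetilde{L}_{x_1+x_2}$ evaluated at $y_1+y_2$ reduces to
\begin{equation*}
\bigl(L_{x_1}\circ j_1 - j_1\circ L_{x_1}\bigr)(y_1) \;+\; \bigl(L'_{x_2}\circ j_2 - j_2\circ L'_{x_2}\bigr)(y_2).
\end{equation*}

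Since the two summands lie in the transverse subspaces $\mathfrak{g}_1$ and $\mathfrak{g}_2$ respectively, the vanishing of the total expression for every choice of $x_1,y_1\in\mathfrak{g}_1$ and $x_2,y_2\in\mathfrak{g}_2$ is equivalent to the vanishing of each summand independently (one sees this by setting $x_2 = y_2 = 0$ or $x_1 = y_1 = 0$). This immediately translates to $[L_{x_1},j_1]=0$ and $[L'_{x_2},j_2]=0$ for all $x_1,x_2$, which is precisely the infinitesimal version of $\nabla_1 J_1 = 0$ and $\nabla_2 J_2 = 0$.

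There is no real obstacle in this argument: the proof is essentially a bookkeeping of which terms in the expansion of $[\widetilde{L},j]$ vanish by virtue of the K\"ahler-type hypothesis on the representations $(\theta,\rho)$ and which remain as genuine obstructions to $\nabla J = 0$. The only thing worth stating carefully is the equivalence between $\nabla J = 0$ on the group and $[\widetilde{L}_x, j] = 0$ for all $x$, which is a standard consequence of left-invariance and the definition $x\cdot y = L_x(y) = (\nabla_{x^+}y^+)(e)$.
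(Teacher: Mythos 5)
Your proof is correct and follows essentially the same route as the paper, which reduces $\nabla J=0$ to the commutation $[\widetilde{L}_{x_1+x_2},j]=0$ and observes that the mixed terms vanish because $\rho(x_2)\in\mathfrak{gl}(\mathfrak{g}_1,j_1)$ and $\theta(x_1)\in\mathfrak{gl}(\mathfrak{g}_2,j_2)$. You simply spell out the computation that the paper calls ``straightforward,'' and the details check out.
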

\begin{proof}
	Let $\widetilde{L}$ be the linear representation induced by the product $\cdot$ given in \eqref{TwistedProduct}. The result is a straightforward computation that follows from checking what happens when $j=j_1+j_2$ and $\widetilde{L}_{x_1+x_2}$ commute for all $x_1\in\mathfrak{g}_1$ and $x_2\in\mathfrak{g}_2$.
\end{proof}
Let us now introduce a ``double reduction'' process for a special K\"ahler Lie algebra which admits a \emph{complex and non-degenerate} left ideal, that is, a left ideal $I$ of $(\mathfrak{g},\cdot)$ such that $j(I)=I$ and $I$ is symplectic what means that $\omega|_{I\times I}$ is non-degenerate. This motivates the construction that we named twisted cartesian product.
\begin{theorem}\label{TwistedDouble}
	Let $(\mathfrak{g},\omega,j,\cdot)$ be a special K\"ahler Lie algebra which admits a complex and non-degenerate left ideal $I$. Then there exist two special K\"ahler Lie algebras $\mathfrak{g}_1$ and $\mathfrak{g}_2$ together with two Lie algebra representations $\theta:\mathfrak{g}_1\to \mathfrak{kl}(\mathfrak{g}_2,\omega_2,j_2)$ and $\rho:\mathfrak{g}_2\to \mathfrak{kl}(\mathfrak{g}_1,\omega_1,j_1)$  such that the special K\"ahler structure of $\mathfrak{g}$ can be obtained as the twisted cartesian product of $\mathfrak{g}_1$ and $\mathfrak{g}_2$ according to $(\theta,\rho)$.
\end{theorem}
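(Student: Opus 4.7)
The plan is to decompose $\mathfrak{g}$ symplectically as $I\oplus I^\perp$ and show that each summand inherits a special K\"ahler Lie algebra structure, while the ``cross'' pieces of the product on $\mathfrak{g}$ furnish the representations $\theta$ and $\rho$. First, since $(\mathfrak{g},\omega,j)$ is K\"ahler one has $\omega(j(v),w)=-\omega(v,j(w))$, and combining this with $j(I)=I$ forces $j(I^\perp)=I^\perp$; the non-degeneracy of $\omega|_I$ then yields $\mathfrak{g}=I\oplus I^\perp$. I would set $\mathfrak{g}_1:=I$, $\mathfrak{g}_2:=I^\perp$, $\omega_i:=\omega|_{\mathfrak{g}_i\times\mathfrak{g}_i}$, $j_i:=j|_{\mathfrak{g}_i}$, and declare $\theta(x_1)(y_2):=x_1\cdot y_2$ and $\rho(x_2)(y_1):=x_2\cdot y_1$.

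The key observation behind this setup is that every left multiplication $L_x$ preserves both $I$ and $I^\perp$: one has $L_x(I)\subseteq I$ because $I$ is a left ideal of $(\mathfrak{g},\cdot)$, and then $L_x(I^\perp)\subseteq I^\perp$ because $L_x\in\mathfrak{sp}(\mathfrak{g},\omega)$ by axiom $\iota\iota\iota$ of Definition \ref{definitionLie}. In particular the products $\cdot_i:=\cdot|_{\mathfrak{g}_i\times\mathfrak{g}_i}$ are well defined, the cross-maps $\theta$ and $\rho$ take values in $\mathfrak{gl}(\mathfrak{g}_2)$ and $\mathfrak{gl}(\mathfrak{g}_1)$ respectively, and the decomposition $(x_1+x_2)\cdot(y_1+y_2)=x_1\cdot y_1+x_2\cdot y_1+x_1\cdot y_2+x_2\cdot y_2$ matches formula \eqref{TwistedProduct} term by term. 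The routine verifications I would carry out next are that each $(\mathfrak{g}_i,\omega_i,j_i,\cdot_i)$ is itself a special K\"ahler Lie algebra (everything restricts, and integrability of $j_i$ uses that $I$, $I^\perp$ are complex subalgebras), that $\theta$ and $\rho$ are Lie algebra representations (a direct rewriting of the left-symmetry identity $(x,y,z)=(y,x,z)$ with mixed arguments), and that $\theta(x_1)\in\mathfrak{sp}(\mathfrak{g}_2,\omega_2)$ and $\rho(x_2)\in\mathfrak{sp}(\mathfrak{g}_1,\omega_1)$, which are immediate from axiom $\iota\iota\iota$ for $\omega$.

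The main obstacle is proving that $\theta$ and $\rho$ actually land in the K\"ahler subalgebras $\mathfrak{kl}$, i.e.\ that they commute with the relevant complex structures, since the cocycle condition $j\in Z^1_L(\mathfrak{g},\mathfrak{g})$ does not immediately yield commutation of $j$ with individual $L_x$. My strategy is to introduce the defect $\phi_x:=j\circ L_x-L_x\circ j$ and observe that rewriting $j([x,y])=L_x(j(y))-L_y(j(x))$ using $[x,y]=x\cdot y-y\cdot x$ produces exactly the symmetry $\phi_x(y)=\phi_y(x)$. Taking $x=x_2\in I^\perp$ and $y=y_1\in I$, the fact that $j$ and $L_{x_2}$ both preserve $I$ gives $\phi_{x_2}(y_1)\in I$, while the fact that $j$ and $L_{y_1}$ both preserve $I^\perp$ gives $\phi_{y_1}(x_2)\in I^\perp$; the symmetry $\phi_{x_2}(y_1)=\phi_{y_1}(x_2)$ then forces both to lie in $I\cap I^\perp=\{0\}$. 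This is precisely the statement that $\rho(x_2)$ commutes with $j_1$ and that $\theta(x_1)$ commutes with $j_2$.

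With all hypotheses of Theorem \ref{TwistedAlgebra} in place, I would finish by remarking that identities \eqref{Twisted1} and \eqref{Twisted2} are automatic: they are equivalent to the left-symmetry of $\cdot$ on $\mathfrak{g}$, which was assumed, and the formulas for $[\cdot,\cdot]_\dagger$, $\omega$, $j$, and $\cdot$ in the twisted cartesian product of $(\mathfrak{g}_1,\omega_1,j_1,\cdot_1)$ and $(\mathfrak{g}_2,\omega_2,j_2,\cdot_2)$ along $(\theta,\rho)$ reproduce the original data on $\mathfrak{g}_1\oplus\mathfrak{g}_2=\mathfrak{g}$ summand by summand.
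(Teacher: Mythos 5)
Your proposal is correct and follows essentially the same route as the paper: decompose $\mathfrak{g}=I\oplus I^{\perp_\omega}$, restrict all the data to get $\mathfrak{g}_1$ and $\mathfrak{g}_2$, read off $\theta$ and $\rho$ from the cross terms of $\cdot$, and derive their commutation with $j_1,j_2$ from the cocycle identity $j\in Z^1_L(\mathfrak{g},\mathfrak{g})$ by splitting into $I$- and $I^{\perp_\omega}$-components. Your symmetric-defect formulation $\phi_x(y)=\phi_y(x)$ is just a repackaging of the paper's computation $\theta(x_1)(j_2(x_2))-\rho(x_2)(j_1(x_1))=j_2(\theta(x_1)(x_2))-j_1(\rho(x_2)(x_1))$, so no substantive difference.
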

\begin{proof}
	Let us prove that $\mathfrak{g}_1=I$ and $\mathfrak{g}_2=I^{\perp_{\omega}}$ both of them equipped with the special K\"ahler Lie algebra structure of $\mathfrak{g}$ restricted. As $I$ is a left ideal of $(\mathfrak{g},\cdot)$, we have that the following relation holds true
	$$\omega(x\cdot x_2,x_1)=-\omega(x_2,x\cdot x_1)=0,$$
	for all $x\in\mathfrak{g}$, $x_1\in I$, and $x_2\in I^{\perp_{\omega}}$. Thus $x\cdot x_2\in I^{\perp_{\omega}}$ which implies that $I^{\perp_{\omega}}$ is a left ideal of $(\mathfrak{g},\cdot)$ as well. Because the restriction $\omega|_{I\times I}$ is non-degenerate, it is simple to see that $I\cap I^{\perp_{\omega}}=\{0\}$ which means that  $\mathfrak{g}=I\oplus I^{\perp_{\omega}}$. Moreover, the fact that the Lie bracket of $\mathfrak{g}$ is given as the commutator of $\cdot$ implies that both $I$ and $I^{\perp_{\omega}}$ are Lie subalgebras of $\mathfrak{g}$.
	
	Let $L_{x_1}(y_1)=x_1\cdot y_1\in I$  and $L'_{x_2}(y_2)=x_2\cdot y_2$ denote the restriction of the product $\cdot$ to $I$ and $I^{\perp_{\omega}}$, respectively. The properties that we will write below for $I$ with $L:I\to \mathfrak{gl}(I)$ defined by $L_{x_1}(y_1)=x_1\cdot y_1$ for all $x_1, y_1\in I$ are also true for $I^{\perp_{\omega}}$ with $L':I^{\perp_{\omega}}\to \mathfrak{gl}(I^{\perp_{\omega}})$ defined by $L'_{x_2}(y_2)=x_2\cdot y_2$ for all $x_2, y_2\in I^{\perp_{\omega}}$.
	\begin{enumerate}
		\item[$\iota.$] Because $\cdot$ is a left symmetric product compatible with the Lie algebra structure of $\mathfrak{g}$ and $I$ is a left ideal, the map $L:I\to \mathfrak{gl}(I)$ is a Lie algebra representation. Moreover, the facts that $\cdot$ is symplectic with respect to $\omega$ and $j\in Z_L^1(\mathfrak{g},\mathfrak{g})$ respectively implies that
		$$\omega_1(L_{x_1}(y_1),z_1)+\omega_1(y_1,L_{x_1}(z_1))=0\qquad\textnormal{and}\qquad j_1\in Z_L^1(I,I),$$
		for all $x_1, y_1, z_1\in I$. Here $\omega_1=\omega|_{I\times I}$ and $j_1=j|_{I}$. For the case of $I^{\perp_{\omega}}$ we set $\omega_2=\omega|_{I^{\perp_{\omega}}\times I^{\perp_{\omega}}}$ and $j_2=j|_{I^{\perp_{\omega}}}$.
		\item[$\iota\iota.$] Let us now define $\theta:I\to \mathfrak{gl}(I^{\perp_{\omega}})$ and $\rho:I^{\perp_{\omega}}\to\mathfrak{gl}(I)$ as $\theta(x_1)(x_2)=x_1\cdot x_2$ and $\rho(x_2)(x_1)=x_2\cdot x_1$ for all $x_1\in I$ and $x_2\in I^{\perp_{\omega}}$, respectively. On the one hand, as $\cdot$ is a left symmetric product we have
		\begin{eqnarray*}
			(x_1\cdot y_1-y_1\cdot x_1)\cdot x_2 & = & [x_1,y_1]\cdot x_2\\
			& = & x_1 \cdot(y_1\cdot x_2)-y_1\cdot(x_1\cdot x_2)\\
			& = & \theta(x_1)(\theta(y_1)(x_2))-\theta(y_1)(\theta(x_1)(x_2)).
		\end{eqnarray*}
		On the other hand, the fact that $\cdot$ is symplectic with respect to $\omega$ implies
		$$0=\omega_2(x_1\cdot x_2,y_2)+\omega_2( x_2,x_1\cdot y_2)=\omega_2(\theta(x_1)(x_2),y_2)+\omega_2(x_2,\theta(x_1)(y_2)).$$
		In other words, $\theta:I\to \mathfrak{sp}(I^{\perp_{\omega}},\omega_2)$ is a Lie algebra representation. From the equalities
		$$[x_2,y_2]\cdot x_1= x_2 \cdot(y_2\cdot x_1)-y_2\cdot(x_2\cdot x_1)\quad\textnormal{and}\quad \omega_1(x_2\cdot x_1,y_1)+\omega_1(x_1,x_2\cdot y_1)=0,$$
		it follows that $\rho:I^{\perp_{\omega}}\to \mathfrak{sp}(I,\omega_1)$ is a Lie algebra representation as well.
		\item[$\iota\iota\iota.$] The identities
		$$x_1\cdot(x_2\cdot y_1)-x_2\cdot(x_1\cdot y_1)=(x_1\cdot x_2)\cdot y_1-(x_2\cdot x_1)\cdot y_1,\qquad\textnormal{and}$$
		$$x_1\cdot(x_2\cdot y_2)-x_2\cdot(x_1\cdot y_2)=(x_1\cdot x_2)\cdot y_2-(x_2\cdot x_1)\cdot y_2,$$
		must be satisfied for all $x_1,y_1\in I$ and $x_2,y_2\in I^{\perp_{\omega}}$. This is equivalent to require that the representations $\theta:I\to \mathfrak{sp}(I^{\perp_{\omega}},\omega_2)$ and $\rho:I^{\perp_{\omega}}\to \mathfrak{sp}(I,\omega_1)$ verify the equations
		\begin{equation*}
		L_{x_1}\circ\rho(x_2)-\rho(x_2)\circ L_{x_1}=\rho(\theta(x_1)(x_2))-L_{\rho(x_2)(x_1)},\qquad\textnormal{and}
		\end{equation*}
		\begin{equation*}
		L'_{x_2}\circ\theta(x_1)-\theta(x_1)\circ L'_{x_2}=\theta(\rho(x_2)(x_1))-L'_{\theta(x_1)(x_2)}.
		\end{equation*}
		\item[$\iota\nu.$] Finally, as $j\in Z_L^1(\mathfrak{g},\mathfrak{g})$ we get that
		\begin{eqnarray*}
			\theta(x_1)(j_2(x_2))-\rho(x_2)(j_1(x_1)) & = & x_1 \cdot j_2(x_2)-x_2\cdot j_1(x_1)\\
			& = & x_1 \cdot j(x_2)-x_2\cdot j(x_1)\\
			& = & j([x_1,x_2])\\
			& = & j(x_1 \cdot x_2)-j(x_2\cdot x_1)\\
			& = & j_2(\theta(x_1)(x_2))-j_1(\rho(x_2)(x_1)).
		\end{eqnarray*}
		It happens if and only if $\theta(x_1)\circ j_2=j_2\circ \theta(x_1)$ and $\rho(x_2)\circ j_1=j_1\circ \rho(x_2)$ for all $x_1\in I$ and $x_2\in I^{\perp_{\omega}}$. Therefore, $\theta:I\to \mathfrak{kl}(I^{\perp_{\omega}},\omega_2,j_2)$ and $\rho:I^{\perp_{\omega}}\to \mathfrak{kl}(I,\omega_1,j_1)$. 
	\end{enumerate}
	Clearly $\omega_1$ and $\omega_2$ are non-degenerate scalar 2-cocycles, and, $j_1$ and $j_2$ are integrable complex structures. Hence, both $(I,\omega_1,j_1,\cdot)$ and $(I^{\perp_{\omega}},\omega_2,j_2,\cdot')$ are special K\"ahler Lie algebras and  $(\mathfrak{g},\omega,j,\cdot)$ is actually the twisted cartesian product of $I$ and $I^{\perp_{\omega}}$ according to the Lie algebra representations $(\theta,\rho)$ defined above.
\end{proof}

\begin{corollary}
If in the hypothesis of Theorem \ref{TwistedDouble} we assume that $I$ is a bilateral ideal of $(\mathfrak{g},\cdot)$, then $\theta=0$ which means that $\mathfrak{g}$ is obtained as the semi-direct product of $\mathfrak{g}_2$ with $\mathfrak{g}_1$ by means $\rho$.
\end{corollary}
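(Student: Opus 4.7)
The plan is to exploit the fact that the proof of Theorem \ref{TwistedDouble} already shows that $I^{\perp_{\omega}}$ is automatically a \emph{left} ideal of $(\mathfrak{g},\cdot)$, together with the definition $\theta(x_1)(x_2)=x_1\cdot x_2$ for $x_1\in I$ and $x_2\in I^{\perp_{\omega}}$, and then use the extra bilateral assumption to squeeze $x_1\cdot x_2$ into $I\cap I^{\perp_{\omega}}$.

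More precisely, first I would recall from Theorem \ref{TwistedDouble} that $\mathfrak{g}=I\oplus I^{\perp_{\omega}}$ and that the symplectic orthogonality together with the identity $\omega(x\cdot x_2,x_1)=-\omega(x_2,x\cdot x_1)$ already forces $x_1\cdot x_2\in I^{\perp_{\omega}}$ for every $x_1\in I$ and $x_2\in I^{\perp_{\omega}}$. Next, the hypothesis that $I$ is a bilateral ideal of $(\mathfrak{g},\cdot)$ gives $I\cdot \mathfrak{g}\subset I$, and in particular $x_1\cdot x_2\in I$ as well. Since the non-degeneracy of $\omega|_{I\times I}$ implies $I\cap I^{\perp_{\omega}}=\{0\}$, one concludes $x_1\cdot x_2=0$, that is, $\theta(x_1)(x_2)=0$ for all $x_1\in I$ and $x_2\in I^{\perp_{\omega}}$. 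Hence $\theta=0$.

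Once $\theta=0$, Remark $\iota\iota$ immediately identifies the twisted cartesian product of $\mathfrak{g}_1=I$ and $\mathfrak{g}_2=I^{\perp_{\omega}}$ along $(\theta,\rho)=(0,\rho)$ with the semidirect product of $\mathfrak{g}_2$ with $\mathfrak{g}_1$ by means of $\rho$, which gives the desired conclusion. There is no real obstacle in this argument: the whole point is that the bilateral hypothesis is strictly stronger than what was used in Theorem \ref{TwistedDouble}, and the extra strength kills exactly the component $\theta$ of the pair of representations.
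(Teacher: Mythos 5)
Your proof is correct and essentially identical to the paper's: both deduce from the bilateral hypothesis that $x_1\cdot x_2\in I$ while the theorem already places it in $I^{\perp_\omega}$, and then kill it using non-degeneracy (the paper invokes non-degeneracy of $\omega_2$ on $I^{\perp_\omega}$ directly, you invoke $I\cap I^{\perp_\omega}=\{0\}$, which amounts to the same thing). The concluding appeal to Remark $\iota\iota.$ for the semidirect-product identification matches the paper as well.
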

\begin{proof} As $I$ is a bilateral ideal we get that $\theta(x_1)(x_2)=x_1\cdot x_2\in I$ for all $x_1\in I$ and $x_2\in I^{\perp_{\omega}}$ and hence $\omega_2(\theta(x_1)(x_2),y_2)=0$ for all $y_2\in I^{\perp_{\omega}}$. Therefore, as $\omega_2$ is non-degenerate we deduce that $\theta(x_1)(x_2)=0$ for all $x_1\in I$ and $x_2\in I^{\perp_{\omega}}$. That is, $\theta=0$.
\end{proof}

\begin{example}\label{GoodExample}
	Consider the special K\"ahler Lie algebra $\mathfrak{g}_3$ in dimension $4$ associated to the special K\"ahler Lie group $G_3$ given in Example \ref{KeyExample1}. Here we set $\mathfrak{g}_3\cong \textnormal{Vect}_\mathbb{R}\lbrace e_1,e_2,e_3,e_4\rbrace$ with nonzero Lie brackets
	$$[e_1,e_2]=[e_1,e_4]=[e_3,e_2]=[e_3,e_4]=e_2-e_4.$$
	On this Lie algebra we have the structure of special K\"ahler Lie algebra:
	\begin{enumerate}
		\item[$\iota.$] symplectic form $\omega=e_1^\ast \wedge e_2^\ast - e_3^\ast\wedge e_4^\ast$,
		\item[$\iota\iota.$] integrable complex structure $j(e_1)=e_2$ and $j(e_3)=e_4$, and
		\item[$\iota\iota\iota.$] left symmetric product
		\begin{center}
			\begin{tabular}{c|c|c|c|c}
				$\overline{\cdot}$	& $e_1$ & $e_2$ & $e_3$ & $e_4$ \\
				\hline
				$e_1$	& $-e_1+e_3$ & $e_2-e_4$ & $-e_1+e_3$ & $e_2-e_4$ \\
				\hline
				$e_2$	& $0$ & $2e_1-2e_3$  & $0$ & $2e_1-2e_3$ \\
				\hline
				$e_3$	& $-e_1+e_3$ & $e_2-e_4$ & $-e_1+e_3$ & $e_2-e_4$ \\
				\hline
				$e_4$	& $0$ & $2e_1-2e_3$  & $0$ & $2e_1-2e_3$ \\
			\end{tabular}
		\end{center}
	\end{enumerate}
	Let us now consider the model space of special K\"ahler Lie group $((\mathbb{R}^{2n},+),\omega_0,J_0,\nabla^0)$. Note that the special K\"ahler Lie algebra associated to this Lie group is $(\mathbb{R}^{2n},\omega_0,J_0,\cdot^0)$ where $x\cdot^0 y=0$ since $\nabla^0_{\partial_i}{\partial_j}=0$. 
	
	A straightforward computation allows us to get that an element $D\in\mathfrak{sp}(\mathfrak{g}_3,\omega)$ such that $[D,j]=0$ has the form:
	$$D= \left( 
	\begin{array}{cccc}
	0 & a & b & c\\
	-a & 0 & -c & b\\
	b & -c & 0 & d\\
	c & b & -d & 0
	\end{array}%
	\right),\qquad a,b,c,d\in\mathbb{R}.$$
	
	On the one hand, let $T:\mathbb{R}^{2n}\to \mathbb{R}$ be a linear transformation and define the linear Lie algebra representation $\rho: \mathbb{R}^{2n}\to \mathfrak{kl}(\mathfrak{g}_3,\omega,j)$ as $\rho(x)=T(x)D$ for $D$ as fixed above. On the other hand, let $\theta:\mathfrak{g}_3\to \mathfrak{kl}(\mathbb{R}^{2n},\omega_0,J_0)$ be the trivial representation; $\theta=0$. To have that the product \eqref{TwistedProduct} is a left symmetric product on $\mathfrak{g}=\mathfrak{g}_3\oplus \mathbb{R}^{2n}$ we just  need to find the required conditions for which the equation \eqref{Twisted1} holds true. Namely,
	\begin{equation}\label{ExampleTwisted}
	T(x_2)(L_{x_1}\circ D-D\circ L_{x_1})=-T(x_2)L_{D(x_1)}.
	\end{equation}
	Here $L$ is the linear representation induced by the left symmetric product $\overline{\cdot}$. If $x_2\in \textnormal{ker}(T)$, then the identity \eqref{ExampleTwisted} trivially holds. However, if $x_2\notin \textnormal{ker}(T)$ and $x_1=e_1$, then with a straightforward computation we get that \eqref{ExampleTwisted} is true if and only if
	$$D= \left( 
	\begin{array}{cccc}
	0 & a & 0 & a\\
	-a & 0 & -a & 0\\
	0 & -a & 0 & -a\\
	a & 0 & a & 0
	\end{array}%
	\right),\qquad a\in\mathbb{R}.$$ 
	If $D$ takes this form, then it is simple to check that the identity \eqref{ExampleTwisted} always holds for $x_1\in\{e_2,e_3,e_4\}$. Therefore, the vector space $\mathfrak{g}=\mathfrak{g}_3\oplus \mathbb{R}^{2n}$ has a non-trivial structure of special K\"ahler Lie algebra given by
	\begin{enumerate}
		\item[$\iota.$] the nonzero Lie brackets:
		$$[e_1,e_2]=[e_1,e_4]=[e_3,e_2]=[e_3,e_4]=e_2-e_4\qquad\textnormal{and}\qquad [e_j,\hat{e}_k]=-T(\hat{e}_k)D(e_j)$$
		for all $j=1,\cdots,4$ and $k=1,2,\cdots, 2n$
		\item[$\iota\iota.$] the symplectic form $\displaystyle \widetilde{\omega}=e_1^\ast \wedge e_2^\ast - e_3^\ast\wedge e_4^\ast+\sum_{k=1}^{n}\hat{e}_k^\ast\wedge \hat{e}_{n+k}^\ast$,
		\item[$\iota\iota\iota.$] the integrable complex structure $\widetilde{j}(e_1)=e_2$, $\widetilde{j}(e_3)=e_4$, and $\widetilde{j}(\hat{e}_k)=\hat{e}_{n+k}$ for all $k=1,2,\cdots, n$; and
		\item[$\iota\nu.$] the left symmetric product
		\begin{center}
			\begin{tabular}{c|c|c|c|c|c}
				$\cdot$	&  $e_1$ & $e_2$ & $e_3$ & $e_4$ & $\hat{e}_j$ \\
				\hline
				$e_1$	&  $-e_1+e_3$ & $e_2-e_4$ & $-e_1+e_3$ & $e_2-e_4$ & $0$ \\
				\hline
				$e_2$	&  $0$ & $2e_1-2e_3$  & $0$ & $2e_1-2e_3$ & $0$ \\
				\hline
				$e_3$	&  $-e_1+e_3$ & $e_2-e_4$ & $-e_1+e_3$ & $e_2-e_4$ & $0$\\
				\hline
				$e_4$	&  $0$ & $2e_1-2e_3$  & $0$ & $2e_1-2e_3$ & $0$ \\
				\hline
				$\hat{e}_k$	& $T(\hat{e}_k)D(e_1)$ & $T(\hat{e}_k)D(e_2)$  & $T(\hat{e}_k)D(e_3)$ & $T(\hat{e}_k)D(e_4)$ & $0$ \\
			\end{tabular}
		\end{center}
		for all $j,k=1,2,\cdots, 2n$.
	\end{enumerate}
Since $\widetilde{L}_{e_1}\circ \widetilde{j}\neq \widetilde{j}\circ \widetilde{L}_{e_1}$ we have that the left invariant flat affine symplectic connection $\widetilde{\nabla}$ determined by $\cdot$ satisfies $\widetilde{\nabla}\widetilde{J}\neq 0$. Finally, the signature of the scalar product on $\mathfrak{g}$ induced by $(\widetilde{\omega},\widetilde{j})$ is $(2,2n+2)$.
\end{example}

\section{A double extension of a special K\"ahler Lie algebra}
In \cite{V} was introduced a method for constructing flat affine symplectic Lie algebras of dimension $2n+2$ starting from a flat affine symplectic Lie algebra of dimension $2n$. This method uses the cohomology for left symmetric algebras developed in \cite{N} and it is called a \emph{double extension of a flat affine symplectic Lie algebra}. In this section we will use such a construction for inducing an easy way of getting special K\"ahler Lie algebras.

For the purposes of this section let us assume that $(\mathfrak{g},\omega,j,\cdot)$ is a special K\"ahler Lie algebra such that the scalar product $k$ induced by $(\omega,j)$ is positive definite. In the same spirit of \cite{DM} and \cite{V} we may  find a way of obtaining a reduction process of a special K\"ahler Lie algebra structure.
\begin{lemma}[Reduction]\label{reduction}
Suppose that $(\mathfrak{g},\omega,j,\cdot)$ is a special K\"ahler Lie algebra. Let $I$ be totally isotropic bilateral ideal of $(\mathfrak{g},\omega,\cdot)$. Then
\begin{enumerate}
	\item[$\iota.$] The product $\cdot$ in $I$ is null, $I\cdot  I^{\perp_\omega}=0$, and $ I^{\perp_\omega}$ is a left ideal.
	\item[$\iota\iota.$] $ I^{\perp_\omega}$ is a right ideal if and only if, $ I^{\perp_\omega}\cdot I=0$.
	\item[$\iota\iota\iota.$] If $ I^{\perp_\omega}$ is a bilateral ideal of $(\mathfrak{g},\cdot)$, then the canonical sequences
	\begin{equation}\label{secu1}
	0\longrightarrow I\hookrightarrow I^{\perp_\omega}\longrightarrow I^{\perp_\omega}/ I=B\longrightarrow 0,
	\end{equation}
	\begin{equation}\label{secu3}
	0\longrightarrow I\hookrightarrow \mathfrak{g}\longrightarrow \mathfrak{g}/ I\longrightarrow 0,
	\end{equation}
	are sequences of left symmetric algebras. Moreover, the quotient Lie algebra $B=I^{\perp_\omega}/I$ admits a canonical structure of special K\"ahler Lie algebra.
\end{enumerate}
\end{lemma}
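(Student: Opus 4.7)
The plan is to base everything on the single identity that $\cdot$ is symplectic with respect to $\omega$, namely $\omega(a\cdot b,c)+\omega(b,a\cdot c)=0$, combined with the ideal properties of $I$ and non-degeneracy of $\omega$. Throughout I will write $I^\perp$ for $I^{\perp_\omega}$ and use that $I\subset I^\perp$ (since $I$ is totally isotropic) and $\dim I + \dim I^\perp = \dim\mathfrak{g}$.

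\textbf{Parts $\iota.$ and $\iota\iota.$} For $x,y\in I$ and $z\in \mathfrak{g}$, the symplectic identity gives $\omega(x\cdot y,z)=-\omega(y,x\cdot z)$. Since $I$ is a bilateral ideal, $x\cdot z\in I$, and since $y\in I$ and $I$ is isotropic, the right-hand side vanishes. Non-degeneracy of $\omega$ forces $x\cdot y=0$, which proves $I\cdot I=0$. The very same computation with $y$ replaced by any $u\in I^\perp$ (still using that $x\cdot z\in I$ because $I$ is a \emph{left} ideal and $u\in I^\perp$) yields $I\cdot I^\perp=0$. For the left-ideal statement on $I^\perp$, take $z\in\mathfrak{g}$, $u\in I^\perp$, $x\in I$; then $\omega(z\cdot u,x)=-\omega(u,z\cdot x)=0$ because $I$ is a \emph{right} ideal, so $z\cdot u\in I^\perp$. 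Part $\iota\iota.$ is symmetric: $I^\perp$ being a right ideal amounts to $\omega(u\cdot z,x)=0$ for all $u\in I^\perp$, $z\in\mathfrak{g}$, $x\in I$, and by $\omega(u\cdot z,x)=-\omega(z,u\cdot x)$ together with non-degeneracy this is equivalent to $u\cdot x=0$, i.e.\ $I^\perp\cdot I=0$.

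\textbf{Part $\iota\iota\iota.$, sequences.} Under the hypothesis that $I^\perp$ is bilateral in $(\mathfrak{g},\cdot)$, both $I$ and $I^\perp$ are bilateral ideals of their ambient left-symmetric algebras; hence the quotient products are well-defined and the sequences \eqref{secu1} and \eqref{secu3} become exact sequences of left-symmetric (and therefore of Lie) algebras in the standard way.

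\textbf{Part $\iota\iota\iota.$, special K\"ahler structure on $B$.} I will descend the data of $\mathfrak{g}$ to $B=I^\perp/I$. The symplectic datum is classical Marsden--Weinstein reduction: the kernel of $\omega|_{I^\perp\times I^\perp}$ is exactly $(I^\perp)^{\perp_\omega}\cap I^\perp=I$ (the first equality uses non-degeneracy of $\omega$), so $\omega$ induces a non-degenerate 2-form $\bar\omega$ on $B$; the scalar 2-cocycle condition descends from $\omega$. The left-symmetric product $\bar{\cdot}$ on $B$ is inherited from $\cdot$ on $I^\perp$, well-defined because $I\cdot I^\perp=0$ and $I^\perp\cdot I=0$ by $\iota.$ and $\iota\iota.$; the compatibility $\bar\omega(x\bar\cdot y,z)+\bar\omega(y,x\bar\cdot z)=0$ and the fact that the commutator of $\bar\cdot$ is the induced Lie bracket both descend. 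The main obstacle, and the only step that uses the hypothesis $k>0$ of this section, is the complex structure. Here $I$ need not be $j$-stable, so $\bar j([v]):=[j(v)]$ is not directly well-defined on the quotient. I will instead use the $k$-orthogonal complement: set
\[
W \;:=\; I^\perp\cap I^{\perp_k}.
\]
Positive-definiteness of $k$ gives $I\cap I^{\perp_k}=0$, hence $I^\perp=I\oplus W$ as vector spaces, so the restriction $\pi|_W\colon W\to B$ of the projection is a linear isomorphism. The identity $k(v,x)=\omega(v,j(x))$ shows $I^{\perp_k}=j(I)^{\perp_\omega}$, and since $j$ preserves $\omega$ and $j^2=-1$ we obtain $j(W)=j(I^\perp)\cap j(j(I)^{\perp_\omega})=j(I)^{\perp_\omega}\cap I^{\perp_\omega}=W$. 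I then define $\bar j$ on $B$ as the $j$-action transferred through $\pi|_W$.

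\textbf{Verification.} I will check: (a) $\bar j$ is independent of the complement in the sense that $\bar j([v])=[j(v)]$ for $v\in W$ and that this equals the class of $j(v')$ for any $v'\equiv v\pmod I$ (the ambiguity lies in $j(I)$, which is absorbed after projecting to $B$ once restricted to $W$); (b) integrability of $\bar j$ and the cocycle condition $\bar j\in Z^1_{\bar L}(B,B)$ follow from the corresponding properties of $j$ together with the fact that $W$ is a complement on which the structures restrict compatibly (using $I\cdot I^\perp=I^\perp\cdot I=0$ to ensure that products of representatives in $W$ differ from elements of $W$ only by elements of $I$); (c) the induced scalar product $\bar k(\bar x,\bar y)=\bar\omega(\bar x,\bar j\bar y)$ is the restriction of $k$ to $W$ transported to $B$, hence non-degenerate and in fact positive definite. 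The hardest bookkeeping is step (b), where one must carefully use that the difference between $L_x(y)$ in $I^\perp$ and the component of $L_x(y)$ in $W$ lies in $I$, but this is exactly guaranteed by parts $\iota.$ and $\iota\iota.$.
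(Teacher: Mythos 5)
Your proposal is correct and follows essentially the same route as the paper: the key step, transferring $j$ to the quotient via the $j$-invariant complement $W=I^{\perp_\omega}\cap I^{\perp_k}=I^{\perp_\omega}\cap j(I)^{\perp_\omega}$ of $I$ in $I^{\perp_\omega}$ (which exists precisely because $k$ is positive definite), is exactly the paper's construction. The only discrepancies are cosmetic: your left/right ideal labels for $I$ are swapped relative to the paper's convention (harmless since $I$ is bilateral), and your verification sketch in (a)--(c) is at about the same level of detail as the paper's own.
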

\begin{proof}
Items $\iota.$ and $\iota\iota.$ are straightforward computations; see for instance \cite{V}. Let us verify $\iota\iota\iota.$ with more details. Suppose that $I^{\perp_\omega}$ is a bilateral ideal of $(\mathfrak{g},\cdot)$. Given that $I\subset I^{\perp_\omega}$ the fact that $\omega$ is a non-degenerate scalar 2-cocycle implies that $I$ is Abelian. Thus, the quotient vector space $B=I^{\perp_\omega}/I$ inherits a natural structure of symplectic Lie algebra passing to the quotient; see \cite{DM}. More precisely, $B$ has a structure of Lie algebra given by
$$[x+I,y+I]=[x,y]+I=(x\cdot y-y\cdot x)+I,\qquad x,y\in I^{\perp_\omega}.$$
As $\omega$ is a nondegenerate scalar 2-cocycle, it induces on $I^{\perp_\omega}$ a bilinear form of radical $I$. Hence, we have that $\left.\omega\right\vert_{I^{\perp_\omega}\times I^{\perp_\omega}}$ defines, passing to the quotient, a nondegenerate and skew-symmetric bilinear form $\omega'$ on $B=I^{\perp_\omega}/I$ which is also a scalar 2-cocycle of the Lie algebra $B$. This symplectic form is given by $\omega'(x+I,y+I)=\omega(x,y)$ for all $x,y\in I^{\perp_\omega}$.

If we denote the class of $x\in I^{\perp_\omega}$ module $I$ by $\overline{x}=x+I$, then the left symmetric product
$$\overline{x}\cdot\overline{y}=(x+I)\cdot (y+I)=x\cdot y+I=\overline{x\cdot y},$$
satisfies
$$\omega'(\overline{x}\cdot\overline{y},\overline{z})+\omega'(\overline{y},\overline{x}\cdot\overline{z})=\omega'(\overline{x\cdot y},\overline{z})+\omega'(\overline{y},\overline{x\cdot z})=\omega(x\cdot y,z)+\omega(y,x\cdot z)=0,$$
for all $x,y,z\in I^{\perp_\omega}$. 

It is worth noticing that as the scalar product $k$ on $\mathfrak{g}$ induced by $(\omega,j)$ is positive definite then for each $a\in I\backslash\lbrace 0\rbrace$ we get that $k(a,a)=\omega(a,j(a))>0$ so that $j(a)\notin I^{\perp_\omega}$. In consequence, it follows that $I^{\perp_\omega}\cap j(I)=\lbrace 0\rbrace$ and more importantly to us we obtain that $(I^{\perp_\omega}\cap j(I)^{\perp_\omega})\oplus I=I^{\perp_\omega}$ where $I^{\perp_\omega}\cap j(I)^{\perp_\omega}$ is a subspace in $I^{\perp_\omega}$ invariant by $j$; visit \cite{DM} for further details. This implies that $j$ is well restricted to $I^{\perp_\omega}\cap j(I)^{\perp_\omega}$ which in turn may be identified with $B$. After assuming this identification we find a unique way of viewing each class $\overline{x}=x+I$ with $x\in I^{\perp_\omega}\cap j(I)^{\perp_\omega}$, thus obtaining that the map $j':B\to B$ given as $j'(\overline{x})=j(x)+I=\overline{j(x)}$ defines an integrable complex structure on $B$ such that $(B,\omega',j')$ is a K\"ahler Lie algebra. Moreover, we get that
$$j'(\overline{[x,y]})=\overline{j([x,y])}=\overline{x\cdot j(y)-y\cdot j(x)}=\overline{x\cdot j(y)}-\overline{y\cdot j(x)}=\overline{x}\cdot \overline{j(y)}-\overline{y} \cdot \overline{j(x)}.$$
In conclusion, the triple $(B,\omega',j',\overline{\cdot})$ is a special K\"ahler Lie algebra.

\end{proof}

\begin{Assumption}
Assume that both $I$ and $I^{\perp_\omega}$ are bilateral ideals of $(\mathfrak{g},\cdot)$ with $\dim I=1$. 
\end{Assumption}
It is clear that if $\dim I=1$, then we are in the conditions of Lemma \ref{reduction}. Let $B=I^{\perp_\omega}/I$ denote the special K\"ahler Lie algebra obtained from the reduction. Suppose that $I=\mathbb{R}e$ with $k(e,e)=1$ and set $d=j(e)$ so that $\mathbb{R}d$ is a 1-dimensional subspace in $\mathfrak{g}$ such that $\omega(e,d)=1$. As $B\approx I^{\perp_\omega}\cap j(I)^{\perp_\omega}$ is invariant by $j$ and $j(I)=\mathbb{R}d$ then we can identify $\mathfrak{g}\approx \mathbb{R}e\oplus B\oplus \mathbb{R}d$. From now on, the left symmetric products of $B$ and $\mathfrak{g}$ will be denoted by $\cdot$ and $\diamond$, respectively. Also, if $A\in \mathfrak{gl}(B)$ then we denote by $A^\ast$ the adjoint map with respect to $\omega'$ associated to $A$, that is, $A^\ast\in \mathfrak{gl}(B)$ verifies $\omega'(A(x),y)=\omega'(x,A^\ast(y))$ for all $x,y\in B$. Under the previous identifications we have the following facts. See \cite{V} for more details about $\iota.$, $\iota\iota.$, and $\iota\iota\iota.$ stated below.
\begin{enumerate}
	\item[$\iota.$] Lie algebra structure of $\mathfrak{g}$:
	\begin{eqnarray}\label{bracketdoble2}
	&  & [d,e]=\mu e\nonumber\\
	&  & [d,x]=\omega'(z_0,x)e+D(x)\\
	&  & [x,y]=\omega'((u+u^*)(x),y)e+[x,y]_B\nonumber,\qquad x,y\in B.
	\end{eqnarray}
	\item[$\iota\iota.$] Non-degenerate scalar 2-cocycle $\omega$:
	$$\omega|_B=\omega'\qquad \omega(e,d)=1\qquad\textnormal{and}\qquad \textnormal{Vect}_\mathbb{R}\lbrace e,d\rbrace \perp_{\omega} B.$$
	\item[$\iota\iota\iota.$] Left symmetric product $\diamond$ compatible with the Lie algebra structure of $\mathfrak{g}$ and symplectic with respect to $\omega$:
	\begin{eqnarray}\label{productdoble2}
	&  & e\diamond x=x\diamond e=e\diamond e=0\nonumber\\
	&  & x\diamond y=\omega'(u(x),y)e+x\cdot y\nonumber\\
	&  & d\diamond x=\omega'(x_0,x)e+(D+u)(x)\nonumber\\
	&  & x\diamond d=\omega'(x_0-z_0,x)e+u(x)\\
	&  & d\diamond e=\lambda e\nonumber\\
	&  & e\diamond d=(\lambda-\mu)e\nonumber\\
	&  & d\diamond d=\beta e+x_0-\lambda d\nonumber,
	\end{eqnarray}
\end{enumerate} 
where $\lambda,\mu,\beta\in\mathbb{R}$, $x_0,z_0\in B$, $D\in\mathfrak{gl}(B)$, and $u\in Z_{L}^1(B,B)$ such that $D+u\in\mathfrak{sp}(B,\omega')$. All these parameters must verify the following algebraic conditions:
\begin{enumerate}
	\item $\lambda=\mu$ or $\lambda=\dfrac{\mu}{2}$.
	\item $[u,D]_{\mathfrak{gl}(B)}=u^2+\lambda u-R_{x_0}$.
	\item $D^\ast(x_0-z_0)-2u^\ast(x_0)-2\lambda(x_0-z_0)+(\lambda-\mu)z_0=0$.
	\item $D(x)\cdot y+x\cdot D(y)-D(x\cdot y)=u(x\cdot y)-x\cdot u(y)$.
	\item $(\lambda-\mu)(u+u^\ast)(x)-2(u\circ u^\ast)(x)=(L_x+R_x^\ast)(x_0-z_0)$.
\end{enumerate}
Here $R_x:B\to B$ is defined as $R_x(y)=L_y(x)=y\cdot x$ for all $x,y\in B$. For more details see \cite{V}.

Let us now see what happens with the integrable complex structure $j$. Note that because of the identifications we have done above we know that $j$ satisfies
\begin{equation}\label{ComplexStructureExtended}
j|_B=j'\quad\textnormal{and}\quad j(e)=d,
\end{equation}
so that $j(d)=-e$. To look at the integrability of $j$ with respect to the Lie bracket \eqref{bracketdoble2} we need to analyze all possible cases:
\begin{itemize}
\item the equality $[j(e),j(d)]-[e,d]=j[j(e),d]+j[e,j(d)]$ always holds.
\item The identity $[j(e),j(x)]-[e,x]=j[j(e),x]+j[e,j(x)]$ holds if and only if
$$\omega'(z_0,x)=\omega'(z_0,j'(x))=0\quad\textnormal{and}\qquad [D,j'](x)=0,\qquad x\in B.$$
That is, $[D,j']=0$ and because $\omega'$ is non-degenerate, we also have $z_0=0$.
\item Analogously, the equality $[j(d),j(x)]-[d,y]=j[j(d),y]+j[d,j(x)]$ is satisfied if and only if $z_0=0$ and $[D,j']=0$.
\item Finally, given that $j'$ is integrable, the identity $[j(x),j(y)]-[x,y]=j[j(x),y]+j[x,j(y)]$ is true if and only if 
$$\omega'((u+u^\ast)(j'(x)),j'(y))=\omega'((u+u^*)(x),y)\qquad\textnormal{and}$$
$$\omega'((u+u^\ast)(j'(x)),y)=-\omega'((u+u^*)(x),j'(y)),$$
for all $x,y\in B$. As we have that $j'\in \mathfrak{sp}(B,\omega')$ and $\omega'$ is non-degenerate, the previous conditions are verified if and only if $[u+u^\ast,j']=0$.
\end{itemize}
Let us now look at the condition $j\in Z_{L}^1(\mathfrak{g},\mathfrak{g})$, where $L$ denotes the linear representation determined by the left symmetric product $\diamond$ given in \eqref{productdoble2}.
\begin{itemize}
\item The equality $j([e,d])=e\diamond j(d)-d\diamond j(e)$ holds if and only if $-\mu d=-(\beta e+x_0-\lambda d)$. This implies that $\beta=0$, $x_0=0$, and $\lambda=-\mu$. Note that a strong condition in the double extension of a flat affine symplectic Lie algebra is $\lambda=\mu$ or $\lambda=\dfrac{\mu}{2}$. Thus $\lambda=\mu=0$ as well.
\item The identity $j([e,x])=e\diamond j(x)-x\diamond j(e)$ is satisfied if and only if $x\diamond d=0$ for all $x\in B$. As $z_0=x_0=0$ we get that $u=0$. 
\item Given that $z_0=x_0=0$ and $u=0$, the equality $j([d,x])=d\diamond j(x)-x\diamond j(d)$ is true if and only if $(j'\circ D)(x)=(D\circ j')(x)$ for all $x\in B$. So, $[D,j']=0$.
\item Note that as consequence of the algebraic condition (4) presented above it follows that $D$ must be a derivation with respect to the left symmetric product since $u=0$.
\item Finally, because $j'\in Z_L^1(B,B)$, the identity $j([x,y])=x\diamond j(y)-y\diamond j(y)$ always holds.
\end{itemize}
Summing up, we have the following results:
\begin{proposition}\label{PositiveResult}
	Let $(\mathfrak{g},\omega,j,\cdot)$ be a special K\"ahler Lie algebra. Assume that $I=\mathbb{R}e$ with $k(e,e)=1$ is a $1$-dimensional subspace in $\mathfrak{g}$ such that both $I$ and $I^{\perp_\omega}$ are bilateral ideals of $(\mathfrak{g},\cdot)$. If $B=I^{\perp_\omega}/I$ denotes the special K\"ahler Lie algebra obtained from the reduction through $I$, then when setting $d=j(e)$ the left symmetric product of $\mathfrak{g}$ is given by
	$$\diamond|_B=\cdot \qquad e\diamond \mathfrak{g}=\mathfrak{g}\diamond e=\mathfrak{g}\diamond d=d\diamond(\mathbb{R}e\oplus \mathbb{R}d) =\lbrace 0\rbrace\qquad\textnormal{and}\qquad d\diamond x=D(x),\qquad x\in B,$$ 
where $D\in \mathfrak{sp}(B,\omega')$ is a derivation of the left symmetric algebra $(B,\cdot)$ verifying $[D,j']=0$.
\end{proposition}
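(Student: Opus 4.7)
The plan is to substitute the integrable complex structure hypothesis on $j$ into the general presentation of the left symmetric product produced by the double extension machinery of \cite{V} and show that all free parameters collapse to a single linear map $D$ with the stated properties. Since $I = \mathbb{R}e$ and $I^{\perp_\omega}$ are both bilateral ideals, Lemma \ref{reduction} applies, so $B = I^{\perp_\omega}/I$ inherits a special K\"ahler structure $(\omega',j',\cdot)$; setting $d = j(e)$ and identifying $\mathfrak{g} \cong \mathbb{R}e \oplus B \oplus \mathbb{R}d$, the bracket \eqref{bracketdoble2} and product \eqref{productdoble2} depend on parameters $\lambda,\mu,\beta \in \mathbb{R}$, $x_0, z_0 \in B$, $D \in \mathfrak{gl}(B)$ and $u \in Z_L^1(B,B)$ with $D + u \in \mathfrak{sp}(B,\omega')$, subject to the five algebraic conditions already recalled in the text. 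The task is then to pin down all of this data using the two remaining special K\"ahler conditions on $j$: the vanishing of the Nijenhuis tensor, and the 1-cocycle identity $j \in Z_L^1(\mathfrak{g},\mathfrak{g})$.

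First I would impose integrability of $j$ by testing the Nijenhuis identity on every pair of arguments taken from $\{e,d\}$ and $B$. The pair $(e,d)$ is automatic, while the pairs $(e,x)$ and $(d,x)$ for $x \in B$ reduce, using the non-degeneracy of $\omega'$, to the two simultaneous conditions $z_0 = 0$ and $[D,j'] = 0$; the pair $(x,y)$ in $B$ reduces, using $j' \in \mathfrak{sp}(B,\omega')$ together with $j'$ integrable on $B$, to $[u + u^\ast, j'] = 0$. Next I would impose the cocycle identity $j([a,b]) = a \diamond j(b) - b \diamond j(a)$ pair by pair. Comparing the $(e,d)$ case against \eqref{bracketdoble2}--\eqref{productdoble2} forces $\beta = 0$, $x_0 = 0$ and $\lambda = -\mu$; combined with the a priori constraint $\lambda \in \{\mu, \mu/2\}$ this yields $\lambda = \mu = 0$. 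The $(e,x)$ case then forces $x \diamond d = 0$, and since $x_0 = z_0 = 0$ this is equivalent to $u(x) = 0$ for every $x \in B$, i.e.\ $u = 0$. The $(d,x)$ case only recovers $[D,j'] = 0$, and the $(x,y)$ case in $B$ is automatic since $j' \in Z_L^1(B,B)$.

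Substituting $u = 0$, $x_0 = z_0 = 0$, $\lambda = \mu = 0$ and $\beta = 0$ into \eqref{productdoble2} gives precisely the simplified product of the statement; the compatibility condition $D + u \in \mathfrak{sp}(B,\omega')$ becomes $D \in \mathfrak{sp}(B,\omega')$, and algebraic condition (4) degenerates to $D(x \cdot y) = D(x) \cdot y + x \cdot D(y)$, so $D$ is a derivation of $(B,\cdot)$; integrability already yielded $[D,j'] = 0$, and the remaining algebraic conditions (2), (3), (5) are trivially satisfied under these specialisations. The main obstacle is purely bookkeeping: correctly transcribing each integrability or cocycle equation into an identity on the parameters, keeping track of which equations are forced and which are redundant, and using the non-degeneracy of $\omega'$ at every step to pass from $\omega'$-valued identities to pointwise identities on $B$.
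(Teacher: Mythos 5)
Your proposal is correct and follows essentially the same route as the paper: both arguments start from the parametrized bracket \eqref{bracketdoble2} and product \eqref{productdoble2} of \cite{V}, then impose the integrability of $j$ case by case to force $z_0=0$, $[D,j']=0$, $[u+u^\ast,j']=0$, and the cocycle condition $j\in Z_L^1(\mathfrak{g},\mathfrak{g})$ to force $\beta=0$, $x_0=0$, $\lambda=\mu=0$, $u=0$, with algebraic condition (4) then reducing to $D$ being a derivation of $(B,\cdot)$. The only detail you add beyond the paper's exposition is the explicit check that conditions (2), (3), (5) become vacuous, which is accurate.
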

Reciprocally, we get a method for constructing special K\"ahler Lie algebras:
\begin{theorem}\label{doubleKahler}
Let $(\mathfrak{g},\omega,j,\cdot)$ be a special K\"ahler Lie algebra and let $D\in \mathfrak{sp}(\mathfrak{g},\omega)$ be a derivation of the left symmetric algebra $(\mathfrak{g},\cdot)$ verifying $[D,j]=0$. Then the vector space 
$\hat{\mathfrak{g}}:=\mathbb{R}e\oplus \mathfrak{g} \oplus \mathbb{R}d$ equipped with
\begin{enumerate}
\item[$\iota.$] the Lie bracket $[\cdot,\cdot]$:
$$[\cdot,\cdot]|_\mathfrak{g}=[\cdot,\cdot]_\mathfrak{g},\qquad e\in \mathfrak{z}(\hat{\mathfrak{g}})\qquad \textnormal{and}\qquad [d,x]=D(x),\qquad x\in\mathfrak{g}$$
\item[$\iota\iota.$] the non-degenerate scalar 2-cocycle $\widetilde{\omega}$:
$$\widetilde{\omega}|_\mathfrak{g}=\omega,\qquad \widetilde{\omega}(e,d)=1\qquad\textnormal{and}\qquad \textnormal{Vect}_\mathbb{R}\lbrace e,d\rbrace \perp_{\widetilde{\omega}} \mathfrak{g}$$
\item[$\iota\iota\iota.$] the left symmetric product $\diamond$:
$$\diamond|_\mathfrak{g}=\cdot \qquad e\diamond \hat{\mathfrak{g}}=\hat{\mathfrak{g}}\diamond e=\hat{\mathfrak{g}}\diamond d=d\diamond(\mathbb{R}e\oplus \mathbb{R}d) =\lbrace 0\rbrace\qquad\textnormal{and}\qquad d\diamond x=D(x),\qquad x\in\mathfrak{g}$$ 
\item[$\iota\nu.$] and the integrable complex structure $\widetilde{j}$:
$$\widetilde{j}|_\mathfrak{g}=j\qquad\textnormal{and}\qquad \widetilde{j}(e)=d
,$$
\end{enumerate}
defines another special K\"ahler Lie algebra.
\end{theorem}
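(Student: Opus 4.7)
The plan is to verify directly the four axioms of Definition~\ref{definitionLie} for the data $(\hat{\mathfrak{g}},\widetilde{\omega},\widetilde{j},\diamond)$, exploiting the fact that the product table for $\diamond$ is extremely sparse: the only nontrivial entries are $\diamond|_{\mathfrak{g}\times\mathfrak{g}}=\cdot$ and $d\diamond x=D(x)$ for $x\in\mathfrak{g}$. Consequently every verification splits into a handful of cases according to whether each argument lies in $\mathbb{R}e$, $\mathfrak{g}$, or $\mathbb{R}d$, and in each case all but the obviously trivial pieces are killed by one of the three hypotheses on $D$: that it is a derivation of $(\mathfrak{g},\cdot)$, that it lies in $\mathfrak{sp}(\mathfrak{g},\omega)$, and that it commutes with $j$.

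First I would show that $\diamond$ is left symmetric. Any associator $(a,b,c)=a\diamond(b\diamond c)-(a\diamond b)\diamond c$ vanishes whenever $e$ appears in any slot or whenever $d$ sits in the second or third slot, simply by inspection of the product rules. The only substantive case is $a=d$, $b,c\in\mathfrak{g}$ (and its transpose in the first two arguments), which demands $D(x\cdot y)-D(x)\cdot y=x\cdot D(y)$; this is precisely the derivation property of $D$, so $(d,x,y)=(x,d,y)$ holds. Taking the commutator of $\diamond$ recovers the stated Lie bracket, so condition $\iota\iota$ of Definition~\ref{definitionLie} is automatic and the Jacobi identity for $[\cdot,\cdot]$ follows from left symmetry.

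Next I would verify that $\widetilde{\omega}$ is a non-degenerate scalar $2$-cocycle. Non-degeneracy is immediate from the block decomposition $\hat{\mathfrak{g}}=\mathbb{R}e\oplus\mathfrak{g}\oplus\mathbb{R}d$ together with $\widetilde{\omega}(e,d)=1$ and $\widetilde{\omega}|_{\mathfrak{g}\times\mathfrak{g}}=\omega$. For the cocycle identity $\oint\widetilde{\omega}([x,y],z)=0$, the case $x,y,z\in\mathfrak{g}$ is the original condition on $\omega$, cases containing $e$ collapse to $0$ because $e$ is central and $\widetilde{\omega}(e,\mathfrak{g})=0$, and the case $x=d$ with $y,z\in\mathfrak{g}$ reduces to $\omega(D(y),z)-\omega(D(z),y)=0$, which follows from $D\in\mathfrak{sp}(\mathfrak{g},\omega)$ and the antisymmetry of $\omega$. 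The compatibility $\widetilde{\omega}(a\diamond b,c)+\widetilde{\omega}(b,a\diamond c)=0$ is handled analogously: the only nontrivial case is $a=d$, $b,c\in\mathfrak{g}$, which again amounts to $D\in\mathfrak{sp}(\mathfrak{g},\omega)$.

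Finally I would verify that $\widetilde{j}$ is an integrable complex structure with $\widetilde{j}\in Z^1_{\widetilde{L}}(\hat{\mathfrak{g}},\hat{\mathfrak{g}})$. The Nijenhuis condition on two elements of $\mathfrak{g}$ reduces to the Nijenhuis condition for $j$; the mixed cases $(e,x)$, $(d,x)$ with $x\in\mathfrak{g}$ both produce the requirement $D\circ j=j\circ D$; and $(e,d)$ is trivial because $e$ is central. For the cocycle condition $\widetilde{j}([a,b])=a\diamond \widetilde{j}(b)-b\diamond \widetilde{j}(a)$, the case $a,b\in\mathfrak{g}$ is the original hypothesis on $j$; cases involving $e$ are all zero on both sides; and the case $a=d$, $b=x\in\mathfrak{g}$ gives $j(D(x))=D(j(x))$, once more the hypothesis $[D,j]=0$. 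The expected main obstacle is nothing deep but purely bookkeeping, namely keeping the case analysis orderly and making certain that the three hypotheses on $D$ are each invoked in exactly the right cases; the sparse structure of $\diamond$ means there is no hidden interaction to uncover.
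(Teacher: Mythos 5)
Your proposal is correct: every case you isolate is the right one, and the three hypotheses on $D$ (derivation of $(\mathfrak{g},\cdot)$, membership in $\mathfrak{sp}(\mathfrak{g},\omega)$, commutation with $j$) are invoked exactly where they are needed. The route, however, is not the one the paper takes. The paper does not verify the axioms of Definition~\ref{definitionLie} from scratch; it obtains Theorem~\ref{doubleKahler} as the converse reading of the computation preceding Proposition~\ref{PositiveResult}. There, the general double extension of a flat affine symplectic Lie algebra from \cite{V} --- with its full parameter set $\lambda,\mu,\beta,x_0,z_0,u,D$ --- is taken as given, so the left-symmetric, $2$-cocycle, and symplectic-compatibility checks are outsourced to \cite{V}; the only new work is extending $j$ by $\widetilde{j}(e)=d$ and showing that integrability together with the $1$-cocycle condition forces $\lambda=\mu=\beta=0$, $x_0=z_0=0$, $u=0$, $[D,j]=0$, and $D$ a derivation. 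Your direct verification buys self-containedness and is arguably cleaner, since the degenerate product table makes each case a one-line computation; what it does not give you is the classification content that the paper's route carries, namely that (under the positivity hypothesis and the bilateral-ideal assumptions) \emph{every} special K\"ahler structure on such an extension is of this form --- that is Proposition~\ref{PositiveResult}, which your sufficiency argument does not address and is not required to. One small point to make explicit when you write this up: axiom $\iota.$ of Definition~\ref{definitionLie} also requires $(\hat{\mathfrak{g}},\widetilde{\omega},\widetilde{j})$ to be pseudo-K\"ahler, i.e.\ $\widetilde{\omega}(\widetilde{j}a,\widetilde{j}b)=\widetilde{\omega}(a,b)$ and nondegeneracy and symmetry of $\widetilde{k}(a,b)=\widetilde{\omega}(a,\widetilde{j}b)$; this is immediate from the block decomposition (one finds $\widetilde{k}(e,e)=\widetilde{k}(d,d)=1$, $\widetilde{k}(e,d)=0$, $\widetilde{k}|_{\mathfrak{g}}=k$), but it is part of the four axioms you announce and should not be left entirely implicit.
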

\begin{definition}
The Lie algebra $(\hat{\mathfrak{g}},\widetilde{\omega},\widetilde{j},\diamond)$ obtained in Theorem \ref{doubleKahler} is called the \emph{double extension} of the special K\"ahler Lie algebra $(\mathfrak{g},\omega,j,\cdot)$ according to $D$.
\end{definition}
\begin{remark}
 \begin{enumerate}
 \item[$\iota.$] If $k$ denotes the scalar product on $\mathfrak{g}$ induced by $(\omega,j)$, then the scalar product $\widetilde{k}$ on the double extension $\hat{\mathfrak{g}}$ which is induced by $(\widetilde{\omega},\widetilde{j})$ can be seen as
 $$\widetilde{k}=\left( 
 \begin{array}{ccc}k &  &  \\
 & 1 &\\
 & & 1
 \end{array}%
 \right).$$
 \item[$\iota\iota.$] The requirement of positive definiteness of $k$ it is completely necessary to prove the reduction procedure from Lemma \ref{reduction}. If we let the scalar product $k$ to have signature $(p,q)$ with both $p,q$ nonzero, then the reduction procedure and Proposition \ref{PositiveResult} are not true in general. However, if $(\mathfrak{g},\omega,j,\cdot)$ is a special K\"ahler Lie algebra where the scalar product $k$ induced by $(\omega,j)$ is not necessarily positive definite, then the double extension process stated in Theorem \ref{doubleKahler} is still true. What we need to do for proving this claim is to use the double extension process of a flat affine symplectic Lie algebra introduced in \cite{V} and extend the complex structure as in equation \eqref{ComplexStructureExtended}. The requirements of integrability and cohomology property of the complex structure extended are exactly the same that we got before. In this case, if the signature of $k$ is $(p,q)$, then the signature of $\widetilde{k}$ is $(p+2,q)$.
 \end{enumerate}	
\end{remark}

\begin{corollary}
	If $(G,\omega,J,\nabla)$ is a simply connected special K\"ahler Lie group whose Lie algebra is obtained as a double extension, then $G$ is identified with a Lie subgroup of $\mathfrak{g}\rtimes_{\textnormal{Id}}\textnormal{Sp}(\mathfrak{g},\omega_e)$ containing a nontrivial $1$-parameter subgroup formed by  central translations. In particular, if $\nabla J=0$, then such a subgroup is contained in $\mathfrak{g}\rtimes_{\textnormal{Id}}\textnormal{KL}(\mathfrak{g},\omega_e,J_e)$.
\end{corollary}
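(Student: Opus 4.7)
The plan is to reproduce the étale affine representation construction from the forward direction of Theorem~\ref{etale}, but dropping the Kähler requirement on $L_x$ and working only with the symplectic property of the left symmetric product. Let $\mathfrak{g}$ denote the Lie algebra of $G$, realized as a double extension $(\hat{\mathfrak{g}}, \widetilde{\omega}, \widetilde{j}, \diamond)$ via Theorem~\ref{doubleKahler}, and write $L_x(y) = x \diamond y$ for the associated left multiplication. Properties $\iota\iota.$ and $\iota\iota\iota.$ of Definition~\ref{definitionLie} imply respectively that $L: \mathfrak{g} \to \mathfrak{gl}(\mathfrak{g})$ is a Lie algebra homomorphism and that it lands in $\mathfrak{sp}(\mathfrak{g}, \omega_e)$.

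Hence $\theta(x) := (x, L_x)$ defines a Lie algebra homomorphism $\theta: \mathfrak{g} \to \mathfrak{g} \rtimes_{\textnormal{id}} \mathfrak{sp}(\mathfrak{g}, \omega_e)$, and by simple connectedness of $G$ it integrates to a Lie group homomorphism $\rho: G \to \mathfrak{g} \rtimes_{\textnormal{Id}} \textnormal{Sp}(\mathfrak{g}, \omega_e)$ given on $\exp_G(x)$ by the same formula as in the proof of Theorem~\ref{etale}, namely
\[
\rho(\exp_G(x)) = \left( \sum_{m=1}^\infty \frac{1}{m!}(L_x)^{m-1}(x),\ \sum_{m=0}^\infty \frac{1}{m!}(L_x)^m \right).
\]
The orbital map $G \to \mathfrak{g}$ sending $g \mapsto \rho(g)(0)$ differentiates at the identity to $\textnormal{id}_\mathfrak{g}$, so $0$ is a point of open orbit and discrete isotropy, and $\rho$ identifies $G$ with the image Lie subgroup $\rho(G) \subseteq \mathfrak{g} \rtimes_{\textnormal{Id}} \textnormal{Sp}(\mathfrak{g}, \omega_e)$.

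Next, I exploit the specific structure of the double extension from Theorem~\ref{doubleKahler}: the element $e$ lies in $\mathfrak{z}(\hat{\mathfrak{g}})$ and satisfies $e \diamond \hat{\mathfrak{g}} = \hat{\mathfrak{g}} \diamond e = \{0\}$, hence $L_e = 0$ and $\theta(e) = (e, 0)$. Exponentiating the line $\mathbb{R}e \subset \mathfrak{g}$ produces the $1$-parameter subgroup $\{(te, \textnormal{Id}) : t \in \mathbb{R}\} \subset \rho(G)$, which is nontrivial since $e \neq 0$ and consists of translations by central elements of $\mathfrak{g}$. For the last clause, when $\nabla J = 0$ the same computation already carried out in Theorem~\ref{etale} yields $L_x \circ \widetilde{j} = \widetilde{j} \circ L_x$ for every $x$, so $L_x \in \mathfrak{kl}(\mathfrak{g}, \omega_e, J_e)$ and the image of $\rho$ lies automatically in $\mathfrak{g} \rtimes_{\textnormal{Id}} \textnormal{KL}(\mathfrak{g}, \omega_e, J_e)$. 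The only delicate point is the étale condition at $0$ that yields the identification of $G$ with $\rho(G)$; but since only the left symmetric and symplectic identities of $\diamond$ are used, this check is exactly the open-orbit/discrete-isotropy argument already verified in the proof of Theorem~\ref{etale}.
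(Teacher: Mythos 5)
Your proposal is correct and follows essentially the same route as the paper: both construct the homomorphism $\rho\colon G\to \mathfrak{g}\rtimes_{\textnormal{Id}}\textnormal{Sp}(\mathfrak{g},\omega_e)$ by integrating $x\mapsto(x,L_x)$, observe that $L_e=0$ forces the image of $t\mapsto\exp_G(te)$ to be the $1$-parameter subgroup $\{(te,\textnormal{Id})\}$ of central translations, and invoke the $\nabla J=0\Leftrightarrow[L_x,j]=0$ equivalence from Theorem \ref{etale} for the final clause. The only difference is that you spell out the open-orbit/discrete-isotropy check at $0$ that the paper delegates to the references.
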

\begin{proof}
Let $(\mathfrak{g},\omega,j,\cdot)$ be the special K\"ahler Lie algebra associated to $(G,\omega,J,\nabla)$. If $\mathfrak{g}$ is obtained as a double extension of a special pseudo-K\"ahler Lie algebra $B$ according to $D$, then it decomposes as $\mathfrak{g}=\mathbb{R}e\oplus B\oplus \mathbb{R}d$ where $(\omega,j,\cdot)$ are given like $(\widetilde{\omega},\widetilde{j},\diamond)$ in Theorem \ref{doubleKahler}. As $e\diamond \mathfrak{g}=\lbrace 0\rbrace$, it is clear that $\textnormal{Ker}(L)\neq \lbrace 0\rbrace$ since $L_e=0$. Given that $G$ is simply connected, there exists a Lie group homomorphism $\rho: G\to \mathfrak{g}\rtimes_{\textnormal{id}}\textnormal{Sp}(\mathfrak{g},\omega)$ which is determined by the expression
$$\rho(\exp_G(x))=\left(\sum_{m=1}^\infty \dfrac{1}{m!}(L_x)^{m-1}(x),\sum_{m=0}^\infty \dfrac{1}{m!}(L_x)^m\right).$$
See \cite{V} for more details about such a Lie group homomorphism. Therefore, as $L_e=0$, we have that $\rho$ determines a nontrivial $1$-parameter subgroup $H$ of $\rho(G)\approx G$ formed by central translations which is induced by $t\mapsto \textsf{exp}_G(te)$ and given by
$$H=\left\lbrace \rho(\textsf{exp}_G(te))=(te,\textnormal{Id}_\mathfrak{g}):\ t\in\mathbb{R}\right\rbrace.$$
In particular, if $\nabla J=0$ then it follows from Theorem \ref{etale} that $H$ is actually contained in $\mathfrak{g}\rtimes_{\textnormal{Id}}\textnormal{KL}(\mathfrak{g},\omega,j)$.
\end{proof}
\begin{corollary}\label{NablaJ}
Let $(G_1,\omega_1,J_1,\nabla_1)$ and $(G_2,\omega_2,J_2,\nabla_2)$ be two special K\"ahler Lie groups such that the Lie algebra of $G_1$ is obtained as a double extension from the Lie algebra of $G_2$ according with $D$. Then $\nabla J_1=0$ if and only if $\nabla J_2=0$.
\end{corollary}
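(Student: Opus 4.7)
The plan is to translate the condition $\widetilde{\nabla}\widetilde{J}=0$ into the infinitesimal statement that the left-multiplication operators of the underlying left symmetric algebra commute with the complex structure, and then verify this case by case on the three natural pieces of the double extension. Concretely, passing to the special K\"ahler Lie algebras $(\mathfrak{g}_2,\omega_2,j_2,\cdot_2)$ and $\hat{\mathfrak{g}}_2=\mathbb{R}e\oplus\mathfrak{g}_2\oplus\mathbb{R}d$ associated to $G_2$ and $G_1$, the condition $\nabla J_i=0$ is equivalent to $[L_x,j_i]=0$ for every $x$ in the respective Lie algebra (this is the same reformulation already used in the proof of Theorem \ref{etale}). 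Writing $\widetilde{L}$ for the representation induced by $\diamond$, I then want to check that $[\widetilde{L}_x,\widetilde{j}]=0$ for all $x\in\hat{\mathfrak{g}}_2$ if and only if $[L_y,j_2]=0$ for all $y\in\mathfrak{g}_2$.

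The verification splits into three cases, read directly from item $\iota\iota\iota.$ of Theorem \ref{doubleKahler}. First, since $e\diamond\hat{\mathfrak{g}}_2=\{0\}$, we have $\widetilde{L}_e=0$, so $[\widetilde{L}_e,\widetilde{j}]=0$ trivially. Second, $\widetilde{L}_d$ annihilates $\mathbb{R}e\oplus\mathbb{R}d$ (because $d\diamond(\mathbb{R}e\oplus\mathbb{R}d)=\{0\}$) and agrees with $D$ on $\mathfrak{g}_2$; evaluating $[\widetilde{L}_d,\widetilde{j}]$ on $e$ and $d$ yields $0$ because $\widetilde{j}$ swaps these up to sign and $\widetilde{L}_d$ kills them, while evaluating on $y\in\mathfrak{g}_2$ gives $D(j_2(y))-j_2(D(y))=[D,j_2](y)=0$, which holds automatically by the very assumption $[D,j]=0$ built into the double extension. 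Third, for $y\in\mathfrak{g}_2$ the operator $\widetilde{L}_y$ again annihilates $\mathbb{R}e\oplus\mathbb{R}d$ (since $\hat{\mathfrak{g}}_2\diamond e=\hat{\mathfrak{g}}_2\diamond d=\{0\}$) and restricts to $L_y$ on $\mathfrak{g}_2$; the same bookkeeping shows $[\widetilde{L}_y,\widetilde{j}]$ vanishes on $\mathbb{R}e\oplus\mathbb{R}d$ and restricts on $\mathfrak{g}_2$ to $[L_y,j_2]$.

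Combining the three cases, $[\widetilde{L}_x,\widetilde{j}]=0$ for every $x\in\hat{\mathfrak{g}}_2$ is equivalent to $[L_y,j_2]=0$ for every $y\in\mathfrak{g}_2$, which is the desired equivalence $\widetilde{\nabla}\widetilde{J}=0 \Leftrightarrow \nabla_2 J_2=0$. There is no substantive obstacle here: the only nontrivial cross-term that could have produced an extra condition is the one coming from $\widetilde{L}_d$, and this is precisely neutralized by the hypothesis $[D,j_2]=0$ that is already part of the definition of the double extension. Thus the entire argument is a short, structured direct computation on the three pieces $\mathbb{R}e$, $\mathfrak{g}_2$, and $\mathbb{R}d$.
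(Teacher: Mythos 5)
Your proof is correct and follows essentially the same route as the paper, which simply remarks that the result follows from the condition $[j_2,D]=0$; your case-by-case check on $\mathbb{R}e$, $\mathfrak{g}_2$, and $\mathbb{R}d$ just makes explicit the computation the paper leaves implicit. The reformulation of $\nabla J=0$ as $[L_x,j]=0$ and the observation that the only nontrivial cross-term is neutralized by $[D,j_2]=0$ are exactly the intended argument.
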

\begin{proof}
The result follows from the condition $[j_2,D]=0$.
\end{proof}

It is well known that a left invariant flat affine symplectic connection on a connected Lie group is geodesically complete if and only if the group is unimodular (see for instance \cite{Ba,V}). So, the following result is clear:
\begin{corollary}
Let $(G_1,\omega_1,J_1,\nabla_1)$ and $(G_2,\omega_2,J_2,\nabla_2)$ be two special K\"ahler Lie groups such that the Lie algebra of $G_1$ is obtained as a double extension from the Lie algebra of $G_2$ according with $D$. Then $\nabla_1$ is geodesically complete if and only if $\nabla_2$ is geodesically complete and $\textnormal{tr}(D)=0$.
\end{corollary}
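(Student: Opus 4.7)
The plan is to translate geodesic completeness into unimodularity and then unwind the definition of the double extension to compute traces. By the result recalled just before the statement, for $i=1,2$ the connection $\nabla_i$ is geodesically complete if and only if $G_i$ is unimodular, which for a connected Lie group is equivalent to $\textnormal{tr}(\textnormal{ad}_x)=0$ for every $x$ in the Lie algebra. So the statement reduces to the purely algebraic claim that $\hat{\mathfrak{g}} := \textnormal{Lie}(G_1) = \mathbb{R}e\oplus \mathfrak{g}\oplus \mathbb{R}d$ is unimodular if and only if $\mathfrak{g}:=\textnormal{Lie}(G_2)$ is unimodular and $\textnormal{tr}(D)=0$.

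The next step is to compute the matrix of $\textnormal{ad}_x$ with respect to the decomposition $\hat{\mathfrak{g}}=\mathbb{R}e\oplus \mathfrak{g}\oplus \mathbb{R}d$ using the bracket from Theorem \ref{doubleKahler}. Since $e\in \mathfrak{z}(\hat{\mathfrak{g}})$, the element $e$ contributes a zero row and a zero column for every $\textnormal{ad}_x$, and $\textnormal{ad}_e=0$ altogether. For $x\in \mathfrak{g}$, one sees that $\textnormal{ad}_x$ acts as $\textnormal{ad}_x^{\mathfrak{g}}$ on the middle block $\mathfrak{g}$ and sends $d$ to $-D(x)\in \mathfrak{g}$ (with no $d$-component), so in block form the $d$-diagonal entry vanishes and
\[
\textnormal{tr}\bigl(\textnormal{ad}_x|_{\hat{\mathfrak{g}}}\bigr)=\textnormal{tr}\bigl(\textnormal{ad}_x^{\mathfrak{g}}\bigr).
\]
For $x=d$, the bracket relations give $\textnormal{ad}_d(e)=0$, $\textnormal{ad}_d(y)=D(y)$ for $y\in \mathfrak{g}$, and $\textnormal{ad}_d(d)=0$, so $\textnormal{tr}(\textnormal{ad}_d)=\textnormal{tr}(D)$.

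Combining the two computations, the trace form of $\hat{\mathfrak{g}}$ vanishes identically on $\hat{\mathfrak{g}}$ exactly when the trace form of $\mathfrak{g}$ vanishes on $\mathfrak{g}$ and in addition $\textnormal{tr}(D)=0$. This is precisely the equivalence we need, and translating back via the unimodularity criterion completes the proof. No real obstacle is expected, since the only content is the block-triangular form of $\textnormal{ad}_x$ induced by the double extension; the main point to keep track of carefully is that the off-diagonal block sending $d$ to $-D(x)$ lies entirely in $\mathfrak{g}$ and therefore does not contribute to the diagonal trace.
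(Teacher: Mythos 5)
Your proposal is correct and follows exactly the route the paper intends: the paper simply recalls that a left invariant flat affine symplectic connection is geodesically complete if and only if the group is unimodular and then declares the corollary clear, and your block computation of $\textnormal{tr}(\textnormal{ad}_x)$ on $\mathbb{R}e\oplus\mathfrak{g}\oplus\mathbb{R}d$ is precisely the omitted verification. The trace identities $\textnormal{tr}(\textnormal{ad}_x|_{\hat{\mathfrak{g}}})=\textnormal{tr}(\textnormal{ad}_x^{\mathfrak{g}})$ for $x\in\mathfrak{g}$ and $\textnormal{tr}(\textnormal{ad}_d)=\textnormal{tr}(D)$ are right, so nothing is missing.
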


\begin{corollary}
Let $(G,\omega,J,\nabla)$ be a special K\"ahler Lie group such that $\nabla$ is bi-invariant. Then $G$ is nilpotent and its Lie algebra is obtained as a double extension starting from $\{0\}$. In particular, $\nabla J=0$.
\end{corollary}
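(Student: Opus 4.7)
The plan is to reduce the corollary to an induction via the double extension construction, from which $\nabla J=0$ will fall out for free. First, bi-invariance of $\nabla$ is equivalent to each $\Ad_g$ being an automorphism of the left-symmetric product $\cdot$, and infinitesimally to each $\ad_z$ being a derivation of $\cdot$. Combining the derivation identity $\ad_z(x\cdot y)=\ad_z(x)\cdot y+x\cdot\ad_z(y)$ with the left-symmetric axiom $(x,y,z)=(y,x,z)$ (so that the derivation identity rearranges to $(z,x,y)=(x,y,z)+(x,z,y)$ and the LSA axiom collapses this to $(x,y,z)=0$) forces the associator to vanish identically, so $\cdot$ is associative. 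Symplectic compatibility gives $L_x^\ast=-L_x$, and associativity gives $L_{x\cdot y}=L_xL_y$, so
\[
L_{y\cdot x}\;=\;L_{x\cdot y}^\ast\;=\;(L_xL_y)^\ast\;=\;L_yL_x,
\]
and comparing with $L_{x\cdot y}^\ast=-L_{x\cdot y}$ yields the key relations $L_xL_y+L_yL_x=0$ and $L_x^2=0$. Consequently the multilinear map $(x_1,\dots,x_n)\mapsto L_{x_1}\cdots L_{x_n}$ is skew-symmetric and vanishes once $n>\dim\mathfrak{g}$, while any product $L_yL_z$ commutes with every $L_x$. In particular $L_{[x,[y,z]]}=2[L_x,L_yL_z]=0$, so $\mathfrak{g}^3\subset K:=\ker L$; since $K\cdot\mathfrak{g}=0$ and $[K,K]=0$, the iteration $\mathfrak{g}^{n+1}=\mathfrak{g}\cdot\mathfrak{g}^n$ for $n\geq 3$ terminates after finitely many steps, showing $\mathfrak{g}$ nilpotent.

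Next I realize $\mathfrak{g}$ as an iterated double extension of $\{0\}$ by induction on $\dim\mathfrak{g}$, the base being trivial. For $\mathfrak{g}\neq 0$, the two-sided annihilator $K\cap K'=\{x:x\cdot\mathfrak{g}=\mathfrak{g}\cdot x=0\}$ is nonzero: the associative powers $\mathfrak{g}^{\cdot n}$ lie in $K$ for $n>\dim\mathfrak{g}$ and then $\mathfrak{g}^{\cdot(n+1)}\subset K\cdot\mathfrak{g}=0$, so any nonzero element of the top nonvanishing associative power annihilates $\mathfrak{g}$ on both sides. I pick $e\in K\cap K'$ with $k(e,e)\neq 0$ and set $d:=j(e)$; then $\mathbb{R}e$ is a $1$-dimensional bilateral ideal sitting in $Z(\mathfrak{g})$, and skew-adjointness of $L_a$ shows $(\mathbb{R}e)^{\perp_\omega}$ is also a bilateral ideal. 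Proposition~\ref{PositiveResult} (or its non-positive-definite analogue from the remark after Theorem~\ref{doubleKahler}) then identifies $\mathfrak{g}=\mathbb{R}e\oplus B\oplus\mathbb{R}d$ as a double extension of $B:=(\mathbb{R}e\oplus\mathbb{R}d)^{\perp_\omega}$ along a derivation $D\in\mathfrak{kl}(B,\omega|_B,j|_B)$. Because the restriction of $\cdot$ to $B$ remains associative, $B$ inherits a bi-invariant special K\"ahler structure of smaller dimension, and the inductive hypothesis applies.

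The ``In particular'' assertion $\nabla J=0$ then falls out: it holds vacuously on $\{0\}$ and is preserved through each double extension step by Corollary~\ref{NablaJ}. The step I expect to be most delicate is the inductive step, specifically guaranteeing that $K\cap K'$ contains a non-null vector for $k$: in general signature, $k|_{K\cap K'}$ may be degenerate, and one must either appeal to the non-positive-definite double extension (which does not require $k(e,e)=1$) or exploit the cocycle identity $j\in Z_L^1(\mathfrak{g},\mathfrak{g})$, together with the $\ad_a$-invariance arguments relating $j$'s action to the ideal structure of $K$, to produce a symplectic pair $(e,j(e))$ with $k(e,e)\neq 0$.
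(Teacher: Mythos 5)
Your re-derivation of the algebraic consequences of bi-invariance is essentially sound: granting that bi-invariance of $\nabla$ makes every $\ad_z$ a derivation of $\cdot$ (this equivalence is itself the content of Proposition 3.11 of \cite{V}, which the paper simply cites rather than proves), the associator computation does force $(x,y,z)=0$ (your rearrangement has a sign slip --- one gets $(z,x,y)=(x,z,y)-(x,y,z)$ --- but left-symmetry still kills the associator), and the relations $L_xL_y+L_yL_x=0$, $L_x^2=0$ together with your lower-central-series argument correctly give nilpotency. This portion is more self-contained than the paper's proof, which outsources both nilpotency and the double-extension decomposition to Propositions 3.11 and 4.7 of \cite{V} and only supplies the final step, namely that the chain of extensions starting from $\{0\}$ must pass through $\mathbb{R}^2$, where $\nabla^0J_0=0$, so Corollary \ref{NablaJ} propagates $\nabla J=0$ upward --- exactly the argument you give at the end.

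The genuine gap is the one you flag yourself, and it is not minor: your induction needs a $1$-dimensional bilateral ideal $I=\mathbb{R}e$ inside the two-sided annihilator with $k(e,e)\neq 0$, because Proposition \ref{PositiveResult} and the identification $\mathfrak{g}\approx\mathbb{R}e\oplus B\oplus\mathbb{R}d$ with $d=j(e)$ break down precisely when $j(e)\in I^{\perp_\omega}$, i.e.\ when $e$ is $k$-null. You correctly extract a nonzero two-sided annihilator from the top nonvanishing associative power, but nothing in your argument prevents $k$ from being totally isotropic on it when the metric is indefinite; the remark following Theorem \ref{doubleKahler} explicitly warns that Lemma \ref{reduction} and Proposition \ref{PositiveResult} fail in general without positive definiteness. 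Neither of your proposed repairs closes this: that remark only asserts that the double-extension \emph{construction} survives indefinite signature, not the converse \emph{decomposition}, and the cocycle identity $j\in Z_L^1(\mathfrak{g},\mathfrak{g})$ applied to a two-sided annihilator $x$ only yields $\mathfrak{g}\cdot j(x)=0$, which does not produce a non-null vector. The paper avoids the issue by importing the decomposition from Proposition 4.7 of \cite{V}, carried out at the level of flat affine symplectic Lie algebras. To complete your argument you would need either to prove that for these associative special K\"ahler Lie algebras the annihilator always contains a $k$-non-null vector, or to rework the induction so that the complex structure is transported through the purely symplectic reduction.
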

\begin{proof}
The fact that $G$ becomes nilpotent and its Lie algebra is obtained as a double extension starting from $\{0\}$ are two immediate consequences of Propositions 3.11 and 4.7 from \cite{V} and Theorem \ref{doubleKahler}. Given that, up to isomorphism, the only special K\"ahler Lie group of dimension $2$ is $((\mathbb{R}^{2},+),\omega_0,J_0,\nabla^0)$ and $\mathfrak{g}$ is obtained by a series of double extensions starting from $\{0\}$, we should pass by $\mathbb{R}^2$. Therefore, the fact that $\nabla^0 J_0=0$ and Corollary \ref{NablaJ} imply that $\nabla J=0$.
\end{proof}

\begin{example}
Using $((\mathbb{R}^{2n},+),\omega_0,J_0,\nabla^0)$, the model space of special K\"ahler Lie group, and the double extension process, we can easily get a generic example of non-Abelian special K\"ahler Lie group. Note that the special K\"ahler Lie algebra associated to this Lie group is $(\mathbb{R}^{2n},\omega_0,J_0,\cdot^0)$ where $x\cdot^0 y=0$ since $\nabla^0_{\partial_i}{\partial_j}=0$. If $D\in\mathfrak{sp}(\mathbb{R}^{2n},\omega_0)$ is such that $[D,J_0]=0$, then the vector space $\mathfrak{g}=\mathbb{R}e_{2n+2}\oplus \mathbb{R}^{2n}\oplus \mathbb{R}e_1\cong \mathbb{R}^{2n+2}$ admits a structure of special K\"ahler Lie algebra given by:
\begin{enumerate}
	\item[$\iota.$] the Lie bracket:
	$$[e_1,e_{2n+2}]=[e_{2n+2},e_k]=[e_k,e_{i}]=0\qquad \textnormal{and}\qquad [e_{1},e_k]=D(e_k)$$
	for all $k,i=2,\cdots, 2n+1$. Here $\lbrace e_1,\cdots, e_{2n+2}\rbrace$ denotes the canonical basis of $\mathbb{R}^{2n+2}$ and we are identifying $\mathbb{R}^{2n}$ with $\textnormal{Vect}_\mathbb{R}\lbrace e_2,\cdots, e_{2n+1}\rbrace$. 
	\item[$\iota\iota.$] the non-degenerate scalar 2-cocycle $\widetilde{\omega}$:
	$$\widetilde{\omega}|_{\mathbb{R}^{2n}}=\omega_0,\qquad \widetilde{\omega}(e_{1},e_{2n+2})=-1\qquad\textnormal{and}\qquad \textnormal{Vect}_\mathbb{R}\lbrace e_{1},e_{2n+2}\rbrace \perp_{\widetilde{\omega}} \mathbb{R}^{2n}$$
	\item[$\iota\iota\iota.$] the left symmetric product $\diamond$:
	$$e_k \diamond e_{2n+2}=e_{2n+2}\diamond e_k= e_k\diamond e_i=e_k\diamond e_1=0 \qquad \textnormal{and}\qquad e_1\diamond e_k=D(e_k)$$
	for all $k,i=2,\cdots, 2n+1$, and
	\item[$\iota\nu.$] the integrable complex structure $\widetilde{j}$:
	$$\widetilde{j}|_{\mathbb{R}^{2n}}=J_0\qquad\textnormal{and}\qquad \widetilde{j}(e_{2n+2})=e_1.$$
\end{enumerate}
A Lie group with Lie algebra $\mathfrak{g}$ is $G=\mathbb{R}\ltimes_\rho \mathbb{R}^{2n+1}$ which is determined by the semi-direct product of $(\mathbb{R},+)$ with $(\mathbb{R}^{2n+1},+)$ by means of the Lie group homomorphism $\rho:\mathbb{R}\to \textnormal{GL}(\mathbb{R}^{2n+1})$ defined by
$$\rho(t)=\left(\begin{array}{cc}
\textnormal{Exp}(tD) & 0 \\
0 & 1
\end{array}%
\right).$$
The product in $G$ is explicitly given as 
$$(t,x,u)\cdot(t',x',u)=(t+t',\textnormal{Exp}(tD)(x')+x,u+u').$$
Here $(x,u)$, with $x=(x_2,\cdots,x_{2n+1})\in\mathbb{R}^{2n}$, are the coordinates in $\mathbb{R}^{2n+1}$. A basis for the left invariant vector fields on $G$ are
$$e_1^+=\dfrac{\partial}{\partial t},\qquad e_k^+=\textnormal{Exp}(tD)(e_k)\cdot \left( \dfrac{\partial}{\partial x_2},\cdots, \dfrac{\partial}{\partial x_{2n+1}}\right) \qquad \textnormal{and}\qquad e_{2n+2}^+=\dfrac{\partial}{\partial u},$$
for all $k=2,\cdots, 2n+1$.

It is clear that $J_0\in \mathfrak{sp}(\mathbb{R}^{2n},\omega_0)$. Thus, a particularly interesting choice for the double extension is $D=J_0$. The left invariant vector fields for this particular case are:
$$e_1^+=\dfrac{\partial}{\partial t},\qquad e_k^+=\cos(t)\dfrac{\partial}{\partial x_{k}}+\sin(t)\dfrac{\partial}{\partial x_{n+k}},$$
$$e_{n+k}^+=-\sin(t)\dfrac{\partial}{\partial x_{k}}+\cos(t)\dfrac{\partial}{\partial x_{n+k}} \qquad \textnormal{and}\qquad e_{2n+2}^+=\dfrac{\partial}{\partial u}.$$
for all $k=2,\cdots, n+1$. Therefore, the left invariant special K\"ahler structure on $G=\mathbb{R}\ltimes_\rho \mathbb{R}^{2n+1}$ for the case $D=J_0$ is given by 
\begin{enumerate}
\item[$\iota.$] the left invariant symplectic form  $\displaystyle \omega=du\wedge dt+\sum_{k=2}^{n+1}dx_{k+n}\wedge dx_{k}$,
\item[$\iota\iota.$] the left invariant complex structure $J(e_1^+)=-e_{2n+2}^+$, $J(e_k^+)=e_{n+k}^+$, and $J(e_{n+k}^+)=-e_k^+$, for all $k=2,\cdots, n+1$ and
\item[$\iota\iota\iota.$] the left invariant  flat affine symplectic connection
$$\nabla_{e_1^+}e_k^+=e_{n+k}^+\qquad \nabla_{e_1^+}e_{n+k}^+=-e_k^+,\qquad \nabla_{e_1^+}e_{2n+2}^+=0$$
$$\nabla_{e_{2n+2}^+}=\nabla_{e_k^+}=\nabla_{e_{n+k}^+}=0$$
for all $k=2,\cdots, n+1$.
\end{enumerate}
Given that $\nabla^0$ is geodesically complete and $\textnormal{tr}(J_0)=0$, we obtain that $\nabla$ is geodesically complete as well. Moreover, as we have that $\nabla^0 J_0=0$ and $[J_0,J_0]=0$, we conclude that $\nabla J=0$. The metric on $G$ associated to $(\omega,J)$ is also Riemannian.
\end{example}

We end this section by exhibiting a $1$-dimensional family of left invariant special K\"ahler structures in dimension $6$ with associated metric having signature $(4,2)$ and verifying $\nabla J\neq 0$.
\begin{example}
Consider the special K\"ahler Lie algebra $\mathfrak{g}_3$ in dimension $4$ associated to the special K\"ahler Lie group $G_3$ given in Example \ref{KeyExample1} and whose special K\"ahler structure was described at the beginning of Example \ref{GoodExample}.
Recall that an element $D\in\mathfrak{sp}(\mathfrak{g}_3,\omega)$ such that $[D,j]=0$ has the form:
$$D= \left( 
\begin{array}{cccc}
0 & a & b & c\\
-a & 0 & -c & b\\
b & -c & 0 & d\\
c & b & -d & 0
\end{array}%
\right),\qquad a,b,c,d\in\mathbb{R}.$$
A straightforward computation allows us to deduce that $D(e_1\cdot e_1)=D(e_1)\cdot e_1+e_1\cdot D(e_1)$ if and only if $b=0$, $c=a$, and $d=-a$. Furthermore, if $D_a$ denotes the matrix above after replacing the previous equalities then it is simple to check that this always defines a derivation for the left symmetric product on $\mathfrak{g}_3$. Therefore, we get a $1$-dimensional family of special K\"ahler Lie algebras $\mathfrak{g}_a$ of dimension $6$ paremetrized by $a\in \mathbb{R}$ which are obtained as a double extension from $\mathfrak{g}_3$ according to $D_a$. If $\mathfrak{g}_a\cong \textnormal{Vect}_\mathbb{R}\lbrace e, e_1,e_2,e_3,e_4,d\rbrace$ then this is equipped with

\begin{enumerate}
\item[$\iota.$] nonzero Lie brackets $[e_1,e_2]=[e_1,e_4]=[e_3,e_2]=[e_3,e_4]=e_2-e_4$, and $[d,e_i]=D_a(e_i)$ for all $i=1,\cdots,4$,
\item[$\iota\iota.$] symplectic form $\widetilde{\omega}=e^\ast \wedge d^\ast+e_1^\ast \wedge e_2^\ast - e_3^\ast\wedge e_4^\ast$,
\item[$\iota\iota\iota.$]  integrable complex structure $\widetilde{j}(e)=d$, $\widetilde{j}(e_1)=e_2$, and $\widetilde{j}(e_3)=e_4$; and
\item[$\iota\nu.$] left symmetric product
\begin{center}
\begin{tabular}{c|c|c|c|c|c|c}
$\diamond_a$	& $e$ & $e_1$ & $e_2$ & $e_3$ & $e_4$ & $d$ \\
\hline
$e$	& $0$ & $0$ & $0$  & $0$ & $0$ & $0$ \\
\hline
$e_1$	& $0$ & $-e_1+e_3$ & $e_2-e_4$ & $-e_1+e_3$ & $e_2-e_4$ & $0$ \\
\hline
$e_2$	& $0$ & $0$ & $2e_1-2e_3$  & $0$ & $2e_1-2e_3$ & $0$ \\
\hline
$e_3$	& $0$ & $-e_1+e_3$ & $e_2-e_4$ & $-e_1+e_3$ & $e_2-e_4$ & $0$\\
\hline
$e_4$	& $0$ & $0$ & $2e_1-2e_3$  & $0$ & $2e_1-2e_3$ & $0$ \\
\hline
$d$	& $0$ & $-ae_2+ae_4$ & $ae_1-ae_3$  & $-ae_2+ae_4$ & $ae_1-ae_3$ & $0$ \\
\end{tabular}
\end{center}
\end{enumerate}
Since $\widetilde{L}^a_{e_1}\circ \widetilde{j}\neq \widetilde{j}\circ \widetilde{L}^a_{e_1}$ we have that the left invariant flat affine symplectic connection $\widetilde{\nabla}^a$ determined by $\diamond_a$ satisfies $\widetilde{\nabla}^a\widetilde{J}\neq 0$. Moreover, given that $\mathfrak{g}_3$ is unimodular and $\textnormal{tr}(D_a)=0$, we obtain that $\widetilde{\nabla}^a$ is geodesically complete. Finally, the signature of the scalar product on $\mathfrak{g}_a$ induced by $(\widetilde{\omega},\widetilde{j})$ is $(4,2)$.
\end{example}

\section*{Acknowledgments}
I started this work during a visit to the Mathematisches Institut of Albert--Ludwigs--Universit\"at Freiburg in Freiburg--Germany in February 2020. I am very grateful for the hospitality and support that the Research Training Group 1821 gave me when I was there. I would like to thank Andriy Haydys for pointing out the problem of determining left invariant special K\"ahler structures and Edison Fern\'andez Culma for valuable comments and for having pointed out the infinitesimal version of Example \ref{KeyExample1}. I am also grateful for the partial support given by CODI, Universidad de Antioquia, project 2017-15756 Stable Limit Linear Series on Curves.

I am thankful to the anonymous referee who provided many suggestions and corrections that improved the quality of this paper. Last, but not the least, I would like to express my sincere gratitude to my mother and sister for their support and patience when I was writing the first version of the present work.

\end{document}